\documentclass[11pt]{amsart}

\usepackage{amssymb}
\usepackage{mathtools}
\usepackage{hyperref}
\usepackage{mathrsfs}
\mathtoolsset{showonlyrefs}
\usepackage{indentfirst} 

\hypersetup{colorlinks=true,urlcolor=black,  pdftitle={Convexity of Radial Mean Bodies via an Extension of Ball's Bodies},
    pdfauthor={Dylan Langharst}}

\newtheorem{conj}{Conjecture}
\newtheorem{thm}{Theorem}
\newtheorem{lem}[conj]{Lemma}
\newtheorem{prop}[conj]{Proposition}
\newtheorem{ques}{Question}
\newtheorem{defn}[conj]{Definition}

\newcommand{\dlat}{\mathrm{d}}

\newcommand{\vol}{\mathrm{Vol}}
\newcommand{\supp}{\mathrm{supp}}
\newcommand{\N}{\mathbb{N}}

\def\s{\mathbb{S}}

\def\R{{\mathbb R}}

\def\phi{\varphi}
\def\LC{\operatorname{LC}_n}

\begin{document}

\title[Convexity of Radial Mean Bodies]{Convexity of Radial Mean Bodies via \\
an extension of Ball's Bodies}

\author{Dylan Langharst}
\address{Department of Mathematical Sciences \\
Carnegie Mellon University \\
Wean Hall, Pittsburgh, PA 15213, USA}
\email{dlanghar@andrew.cmu.edu}

\subjclass[2020]{Primary 52A20; Secondary 52A30, 26B25}
\keywords{Log-concave functions, radial mean bodies, Ball's bodies, Pr\'ekopa's theorem}

\begin{abstract}
In this work, we extend a classical theorem of Keith Ball on integrals of log-concave functions along rays against the weight $r^{p-1}$ to the previously inaccessible regime $p\in (-1,0)$: if $g:\mathbb R^n\to\mathbb R_+$ is an integrable, upper semi-continuous, log-concave function which attains its maximum at the origin, then
\[
x\mapsto \left(\frac{p}{g(o)}\int_{0}^{\infty}r^{p-1}(g(rx)-g(o))\mathrm{d}\,r\right)^{-\frac{1}{p}}
\]
is a positively 1-homogeneous convex function on $\mathbb{R}^n$. Our approach also provides a new proof of the original regime $p> 0$. The argument is based on a reduction to a two-dimensional inequality derived from Pr\'ekopa's theorem, which may be of independent interest.

As a consequence of this extension, we resolve a nearly 30-year-old question of Richard Gardner and Gaoyong Zhang in the affirmative. In 1998, R. Gardner and G. Zhang introduced the radial $p$th mean bodies $R_p K$ of a convex body $K\subset \mathbb{R}^n$ for $p>-1$. Furthermore, they established that $R_p K$ is convex for $p\geq 0$, but the convexity of $R_p K$ for $p\in (-1,0)$ remained open. We prove that $R_p K$ is convex for all $p>-1$. \end{abstract}

\maketitle

\section{Introduction}
We denote by $\R^n$ the $n$-dimensional Euclidean space with its usual structure and $\R_+=[0,\infty)$. We recall that a function $f:\R^n\rightarrow \R_+$ is said to be $\log$-concave if for every $\lambda \in (0,1)$ and $x,y$ such that $f(x)f(y)>0$, one has 
\begin{equation}f((1-\lambda)x + \lambda y) \geq f(x)^{1-\lambda}f(y)^\lambda.
\label{eq:log}
\end{equation}
The study of convex bodies (compact, convex sets with non-empty interior) associated to log-concave functions has played a central role in modern convex geometry. A fundamental example is the construction introduced by Keith Ball: defining the following class of log-concave functions
\begin{align*}
\LC:=&\left\{f:\R^n\rightarrow \R_+: f\text{ is log-concave,}\right.
\\
&\qquad\qquad\left.\text{upper semi-continuous, and} \; \; 0\!<\!\int_{\R^n} f(x) \dlat x \!< \infty\right\},
\end{align*}
one may assign to $g\in \LC$ a family $K_p(g)$ of convex bodies depending on a parameter $p>0$ via integration along rays with weight $r^{p-1}$. 

This construction has proven to be a powerful tool, with connections to sections of convex bodies \cite{FMY17,KZ15} and log-concave functions \cite{NT26,GT26}, affine isoperimetric inequalities of Blaschke-Santalo \cite{HL17,FM07,CEFPP15}, Zhang's projection \cite{GZ98,ABG20,LP25,LRZ22,LPRY25}, Rogers-Shephard \cite{AHNRZ21,AG19,AlGMJV,HLPRY25} and Brunn-Minkowski type \cite{AG24,KM05}. Ball's bodies have also appeared in geometric functional analysis, such as in the study of Bourgain's slicing problem \cite{BK05,BK06,GPV12} and concentration and thin shell phenomena \cite{GP06,FGP14,PP13-2,GM11}.

In this work, we show that K. Ball's construction extends to the full range $p>-1$. More precisely, we extend the definition of $K_p(g)$ to $p\in (-1,0]$ for functions $g\in \LC$ attaining their maximum at the origin (see Definition~\ref{def:keith_ball_bodies} below), and prove that these sets are convex. This provides a unified framework encompassing several previously studied objects.
\begin{thm}
\label{t:all_g}
    Fix $p>-1$ and let $g\in \LC$ attain its maximum at the origin. Then, $\|\cdot\|_{K_p(g)}$ is a non-negative, positively $1$-homogeneous, proper, convex function on $\R^n$.
\end{thm}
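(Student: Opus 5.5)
The plan is to reduce convexity to a statement in a two-dimensional plane and then prove a two-variable inequality with Prékopa's theorem. The regimes $p>0$, $p\in(-1,0)$ and $p=0$ (the last by a limiting argument) are handled in parallel.

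\emph{Routine properties.} First I record the easy facts. Non-negativity and properness follow from the assumption that $g$ attains its maximum at $o$. For $p\in(-1,0)$ the integrand $r^{p-1}(g(rx)-g(o))$ is then non-positive, so multiplying by $p<0$ gives a non-negative quantity. It is finite because:
\begin{itemize}
\item near $r=0$, log-concavity and upper semicontinuity give $g(o)-g(rx)=O(r)$ along the ray, which beats $r^{p-1}$ since $p>-1$;
\item near $r=\infty$, integrability of $g$ forces $g(rx)\to 0$, so the integrand is $\asymp r^{p-1}$, which is integrable at infinity since $p<0$.
\end{itemize}
Positive $1$-homogeneity follows from the substitution $r\mapsto r/t$, which multiplies the integral by $t^{-p}$ and hence the norm by $t$.

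\emph{Reduction to the plane.} Since convexity of a positively homogeneous function is equivalent to the triangle inequality $\|x+y\|\le\|x\|+\|y\|$, it suffices to fix two linearly independent vectors $x,y$. I then restrict $g$ to the plane they span, i.e.\ set $\phi(a,b)=g(ax+by)/g(o)$ for $(a,b)\in\R^2$. This $\phi$ is log-concave, upper semicontinuous and maximal (equal to $1$) at the origin, and it decays exponentially along rays. So the theorem reduces to the case $n=2$, with $x=e_1$, $y=e_2$.

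\emph{Layer-cake rewriting.} Next I rewrite the functional through level sets. For $p\in(-1,0)$, writing $1-\phi(rx)=\int_0^1\mathbf 1\{\phi(rx)<t\}\,dt$ and integrating in $r$ gives
\[
\|x\|_{K_p(g)}^{-p}=\int_0^1 \rho_{L_t}(x)^{p}\,dt .
\]
Here $L_t=\{\phi\ge t\}$ are nested convex sets containing $o$, and $\rho_{L_t}$ is the radial function. This shows the issue clearly: with $q=-p\in(0,1)$ this is an $L^q$-average of the norms $\|\cdot\|_{L_t}$. A $q$-mean with $q<1$ of convex functions need not be convex. Convexity must therefore come from the log-concave structure in $t$: after the change of variables $t=e^{-s}$ one has
\[
(1-\lambda)L_{e^{-s_1}}+\lambda L_{e^{-s_2}}\subset L_{e^{-((1-\lambda)s_1+\lambda s_2)}} .
\]

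\emph{Key two-dimensional inequality.} I would prove the following: for a log-concave $\phi$ on $\R^2$ with maximum $1$ at $o$, the quantity $I(u)=p\int_0^\infty r^{p-1}(\phi(ru)-1)\,dr$ satisfies $I(u)^{-1/p}\le a+b$ whenever $u=(a,b)$ and $I(e_1)^{-1/p}=a$, $I(e_2)^{-1/p}=b$. My approach is the one behind Ball's original argument, but run in two variables.
\begin{itemize}
\item Write $\lambda=b/(a+b)$ and parametrize the segment from $e_1/a$ to $e_2/b$.
\item Consider the three-variable function $H(\lambda,r,\cdot)=r^{p-1}\bigl(1-\phi(r\,\cdot)\bigr)$ after the substitution $r=e^{\sigma}$. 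The factor $r^{p}$ becomes $e^{p\sigma}$, which is log-linear and so harmless.
\item The difficulty is that $1-\phi$ is not log-concave. To get around this, I would introduce the auxiliary level variable $s$ from the layer-cake formula. The function $(s,\sigma,\lambda)\mapsto e^{p\sigma}\mathbf 1\{\phi(e^{\sigma}u_\lambda)<e^{-s}\}$ is not log-concave either, but its complement indicator $\mathbf 1\{\phi(e^{\sigma}u_\lambda)\ge e^{-s}\}$ is, jointly in suitable variables.
\item The plan is to exploit $\int_0^\infty r^{p-1}\,dr$-type identities to convert the integral over the complement into an integral of a log-concave density, up to an explicit term. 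Prékopa's theorem, applied to marginalize out $(s,\sigma)$, then yields log-concavity in $\lambda$ of a suitable expression. That log-concavity is exactly the two-dimensional inequality needed.
\end{itemize}
I would first try this for $p>0$, where no subtraction is needed and the argument should reproduce Ball's theorem as a check. I would then treat $p\in(-1,0)$ by the complement trick.

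I expect this last step — finding the right log-concave integrand for Prékopa in the regime $p<0$, where the weight $r^{p-1}$ is not integrable at $0$ and $1-\phi$ is not log-concave — to be the main obstacle. Everything else is bookkeeping.
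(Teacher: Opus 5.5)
\paragraph{The key inequality for $p\in(-1,0)$ is missing.} Your routine steps and the planar reduction are fine. The layer-cake identity $\|x\|_{K_p(g)}^{-p}=\int_0^1\rho_{L_t}(x)^p\,\dlat t$ correctly shows why $p<0$ is hard: the gauge is an $L^{q}$-mean, with $q=-p<1$, of gauges. But the proposal stops exactly where the proof has to happen. The two-dimensional inequality for $p\in(-1,0)$ is announced, not proved, and the mechanism you sketch cannot work as stated.

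For $p<0$ the weight $r^{p-1}$ (that is, $e^{p\sigma}$ with $\sigma=\log r$) is not integrable at $r=0$. Meanwhile the complement indicator $\mathbf 1\{\phi(ru)\ge t\}$ equals $1$ for all small $r$ whenever $t<1$. Hence $\int_0^\infty r^{p-1}\mathbf 1\{\phi(ru)\ge t\}\,\dlat r=+\infty$, and your "explicit term" $\int_0^\infty r^{p-1}\,\dlat r$ is infinite too. Marginalizing this integrand with Pr\'ekopa only produces $\infty-\infty$, not a log-concave expression in $\lambda$ that encodes the triangle inequality. Moreover, in the variables you wrote, the set $\{(s,\sigma,\lambda):\phi(e^{\sigma}u_\lambda)\ge e^{-s}\}$ need not be convex, because $(\sigma,\lambda)\mapsto e^{\sigma}u_\lambda$ is not affine. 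This already fails when $\phi$ is the indicator of a square.

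\paragraph{How the paper gets around both obstructions.} The paper works infinitesimally and integrates by parts twice. For smooth $g$ with exponentially decaying derivatives and every $p>-1$, $p\neq0$, one has $g(o)\|u\|_{K_p(g)}^{-p}=\frac{1}{p+1}\int_0^\infty r^{p+1}\,\partial_r^2 g(ru)\,\dlat r$. So both the subtraction of $g(o)$ and the singular weight disappear. The new weight $r^{p+1}$ is log-concave for every $p>-1$.

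Set $G(r,s)=g(ru+s\theta)$, $H(t)=\|u+t\theta\|_{K_p(g)}^{-p}$ and $\Phi(a,b)=\int_0^\infty r^{p+1}G(r+a,b)\,\dlat r$. Differentiation and integration by parts give $g(o)\bigl(H,\dot H,\ddot H\bigr)=\bigl(\tfrac{1}{p+1}\Phi_{aa},\,-\tfrac{p}{p+1}\Phi_{ab},\,p\Phi_{bb}\bigr)$ at the origin. Hence the second-order convexity test $pH\ddot H\le(1+p)\dot H^2$ is exactly $\det\nabla^2\Phi(0,0)\le0$.

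Pr\'ekopa makes $\Phi$ log-concave, so $\nabla^2\Phi\preceq\Phi^{-1}\nabla\Phi\otimes\nabla\Phi$ has at most one positive eigenvalue. Also $\Phi_{aa}(0,0)=-(p+1)\int_0^\infty r^pG_r(r,0)\,\dlat r\ge0$, since $g$ is non-increasing along rays from its maximum at $o$. Together these force the determinant to be non-positive.

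The price of the local route is regularity. The paper recovers it by approximation, in three layers:
\begin{enumerate}
\item Gaussian smoothing, recentred at a maximizer.
\item A factor $e^{-|x|^2/j}$ to make the maximum unique.
\item Moreau envelopes $e^{-v_t}$ for the case $o\in\partial\supp(g)$; these are positive, peak at $o$, and decrease to $g$, so monotone convergence applies.
\end{enumerate}
The case $p=0$ then follows as a limit.

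\paragraph{A false claim in your preliminaries.} Your finiteness claim fails in the boundary case $o\in\partial\supp(g)$. If $g(ru)=0$ for all $r>0$, then $\|u\|_{K_p(g)}=+\infty$, because your $O(r)$ bound needs some $r_0>0$ with $g(r_0u)>0$. This is why the theorem asserts only properness. Properness comes from $\{g>0\}$ having non-empty interior, not from the maximum condition.
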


Our approach reduces the problem to a two-dimensional inequality derived from Pr\'ekopa's theorem, which states that marginals of log-concave functions are log-concave.
\begin{prop}[Pr\'ekopa's theorem, \cite{PreL1}]
\label{p:pre}
    Let $f:\R^{n}\times \R^{m}\to \R_+$ be a log-concave, non-identically zero function. Then, if the function 
    \[
    x\mapsto \int_{\R^m}f(x,y)\dlat y
    \]
    is finite on $\R^n$, it is log-concave on $\R^n$.
\end{prop}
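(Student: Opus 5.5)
Since the marginal $F(x)=\int_{\R^m}f(x,y)\,\dlat y$ is assumed finite everywhere, log-concavity of $F$ amounts to the inequality $F((1-\lambda)x_0+\lambda x_1)\ge F(x_0)^{1-\lambda}F(x_1)^{\lambda}$ for all $x_0,x_1\in\R^n$ and $\lambda\in(0,1)$. I would derive this from the Prékopa--Leindler inequality on $\R^m$, which I would prove first. It states that if $h,u,v:\R^m\to\R_+$ are measurable and
\[
h((1-\lambda)y_0+\lambda y_1)\ge u(y_0)^{1-\lambda}v(y_1)^{\lambda}\quad\text{for all } y_0,y_1\in\R^m,
\]
then $\int h\ge (\int u)^{1-\lambda}(\int v)^{\lambda}$.

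To apply it, fix $x_0,x_1$ and $\lambda$, and set $x_\lambda=(1-\lambda)x_0+\lambda x_1$. Take the slices $u(y)=f(x_0,y)$, $v(y)=f(x_1,y)$ and $h(y)=f(x_\lambda,y)$. All three are measurable, because the superlevel sets of a log-concave function are convex and hence Lebesgue measurable. The log-concavity of $f$ on $\R^{n}\times\R^m$, applied to the points $(x_0,y_0)$ and $(x_1,y_1)$, is precisely the hypothesis of Prékopa--Leindler. When $u(y_0)v(y_1)=0$ there is nothing to check, since the right-hand side vanishes. This yields the desired inequality for $F$. If $F(x_0)=0$ or $F(x_1)=0$ the inequality is trivial, so nothing degenerate occurs.

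The main work is the Prékopa--Leindler inequality itself, which I would prove by induction on $m$.

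\emph{Case $m=1$.} Assume $0<\int u,\int v<\infty$, since otherwise the claim is trivial or follows by truncation and monotone convergence. By the homogeneity of the hypothesis under $u\mapsto au$, $v\mapsto bv$, $h\mapsto a^{1-\lambda}b^{\lambda}h$, I may normalize so that $\sup u=\sup v=1$; after truncating I may also take $u,v$ bounded. Then the layer-cake formula applies together with the one-dimensional Brunn--Minkowski inequality $|(1-\lambda)A+\lambda B|\ge(1-\lambda)|A|+\lambda|B|$ for nonempty measurable $A,B\subset\R$. This elementary inequality follows by translating $A$ and $B$ so that they touch at a point, after first reducing to compact sets by inner regularity.

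The hypothesis gives $\{h\ge t\}\supseteq(1-\lambda)\{u\ge t\}+\lambda\{v\ge t\}$ for $t<1$, where both level sets are nonempty. Integrating in $t$ over $(0,1)$ yields
\[
\int h\ge(1-\lambda)\int u+\lambda\int v\ge\left(\int u\right)^{1-\lambda}\left(\int v\right)^{\lambda}
\]
by the AM--GM inequality. Removing the normalization gives the general case $m=1$. Alternatively, one could use monotone transport between the normalized densities $u/\int u$ and $v/\int v$.

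\emph{Inductive step.} Write $y=(y',s)\in\R^{m-1}\times\R$. For fixed $s_0,s_1$, the functions $u(\cdot,s_0)$, $v(\cdot,s_1)$ and $h(\cdot,(1-\lambda)s_0+\lambda s_1)$ satisfy the hypothesis on $\R^{m-1}$. The induction hypothesis then shows that the partial integrals $\bar h(s)=\int h(y',s)\,\dlat y'$, and likewise $\bar u$ and $\bar v$, satisfy the one-dimensional hypothesis; measurability of these partial integrals follows from Tonelli's theorem. The case $m=1$ together with Fubini's theorem completes the induction.

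I expect the main obstacle to be the one-dimensional inequality, specifically the measure-theoretic care needed there: nonempty level sets, measurability of the Minkowski sum (handled by passing to compact subsets), and the truncation needed for the normalization.
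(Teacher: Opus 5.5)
The paper does not prove this statement. It quotes it as a classical result of Prékopa and uses it only as a black box: for the log-concavity of $g\ast\gamma_k$, of the weighted covariogram $g_{K,\mu}$, and of $\Phi$ in Section~6. So there is no argument in the paper to compare with. Your route is the standard self-contained one, and it is correct:
\begin{itemize}
\item reduce to the functional Prékopa--Leindler inequality on the fibres $\R^m$;
\item prove that inequality by induction on $m$ via Fubini;
\item obtain the case $m=1$ from the layer-cake formula, the one-dimensional Brunn--Minkowski inequality applied to superlevel sets, and AM--GM after normalizing $\sup u=\sup v=1$.
\end{itemize}
Compared with a bare citation, it gives self-containedness and a slightly more transparent statement. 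You only use the three-function hypothesis on the slices at $x_0$, $x_1$, $x_\lambda$. Finiteness of $F$ serves only to make $F(x_\lambda)\ge F(x_0)^{1-\lambda}F(x_1)^{\lambda}$ meaningful.

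Three technical points should be written out, though none is a real gap.

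\textbf{Measurability of sections.} For a merely Lebesgue measurable function on $\R^m$, the sections $h(\cdot,s)$ are measurable only for almost every $s$. So $\bar h(s)$ need not be defined for every $s$, and the one-dimensional hypothesis ``for all $s_0,s_1$'' is not literally available. You can fix this in either of two ways.
\begin{itemize}
\item Observe that in your application every section is again a slice of a log-concave function. Its superlevel sets are convex, so it is measurable for every $s$.
\item Prove Prékopa--Leindler for Borel functions, and reduce the Lebesgue case by replacing $u,v$ with smaller Borel versions and $h$ with a larger Borel version, each equal almost everywhere. This preserves both the hypothesis and the integrals.
\end{itemize}

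\textbf{Infinite partial integrals.} The partial integrals $\bar u,\bar v,\bar h$ may equal $+\infty$ at some points. The one-dimensional case should therefore be stated for $[0,\infty]$-valued functions, with the convention $0\cdot\infty=0$. Your truncation and monotone-convergence step handles this.

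\textbf{Order of truncation and normalization.} You must truncate $u,v$ to bounded functions \emph{before} normalizing by the supremum, since otherwise $\sup u$ may be infinite. Your write-up states these two steps in the opposite order.
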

Our two-dimensional inequality is as follows; it yields a new inequality for second derivatives of log-concave functions integrated against radial weights, which may be of independent interest. We use subscripts to denote partial derivatives in a given variable, e.g.,
\[
f_r(r,s)=\frac{\partial}{\partial r}f(r,s) \qquad \text{and} \quad f_{rs}=\frac{\partial^2}{\partial r\partial s}f(r,s).
\]
\begin{thm}
\label{t:theorem}
    Let $f:\R_+^2\to \R_+$ be a function in the variables $(r,s)$ such that
    \begin{enumerate}
        \item $\max f=f(0,0)$,
        \item $f$ is twice continuously differentiable with the property that there exist constants $A,B>0$ such that
\[
\sum_{\substack{0\leq i,j\leq  2\\ i+j\leq 2}}
\left|\frac{\partial^{i+j}}{\partial r^{i}\partial s^{j}} f(r,s)\right|
\leq
Ae^{-B\sqrt{r^2+s^2}},
\qquad (r,s)\in\R_+^2.
\]
        
        \item $f\in \operatorname{LC}_2$ (when extended by zero to the rest of $\R^2$).
    \end{enumerate}
    Then, for every $p>-1, p\neq 0$, we have
    \begin{equation}
    \label{eq:final_goal_updated_2d}
     \left(\int_{\R_+}\!\!\!r^{p+1} f_{rr}(r,0)\dlat r\right)\left(\int_{\R_+}\!\!\!r^{p+1}f_{ss}(r,0) \dlat r\right) \leq \left(\int_{\R_+}\!\!\!r^{p+1}f_{rs}(r,0)\dlat r\right)^2.
    \end{equation}
\end{thm}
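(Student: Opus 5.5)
The plan is to read \eqref{eq:final_goal_updated_2d} as a statement about the Hessian of a homogeneous function of two variables. For $(a,b)$ in the open quadrant, set
\[
\Phi(a,b)=\int_{\R_+} r^{p-1}\bigl(f(ra,rb)-\mathbf 1_{\{p<0\}}f(0,0)\bigr)\dlat r .
\]
The decay in hypothesis (2) justifies differentiating under the integral sign. At the point $(1,0)$ this gives
\[
\Phi_{aa}=\int_{\R_+} r^{p+1} f_{rr}(r,0)\dlat r,\qquad \Phi_{ab}=\int_{\R_+} r^{p+1}f_{rs}(r,0)\dlat r,\qquad \Phi_{bb}=\int_{\R_+} r^{p+1}f_{ss}(r,0)\dlat r.
\]
So \eqref{eq:final_goal_updated_2d} is exactly $\det \nabla^2\Phi(1,0)\le 0$. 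The function $\Phi$ is positively homogeneous of degree $-p$. Euler's relation gives $\nabla^2\Phi(1,0)\,(1,0)^T=(-p-1)\nabla\Phi(1,0)$, that is, $(\Phi_{aa},\Phi_{ab})=-(p+1)(\Phi_a,\Phi_b)$ at $(1,0)$.

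\textbf{Step 1: the sign of $\Phi_{aa}$.} Integrating by parts twice gives
\[
\int_{\R_+} r^{p+1}f_{rr}(r,0)\dlat r=p(p+1)\int_{\R_+} r^{p-1}\bigl(f(r,0)-\mathbf 1_{\{p<0\}}f(0,0)\bigr)\dlat r .
\]
The boundary terms vanish at $\infty$ by the exponential decay. At $0$ they vanish because $p+1>0$ and, when $p<0$, because $f(r,0)-f(0,0)=O(r)$. Hypothesis (1) then gives $\Phi_{aa}>0$ in both regimes: for $p<0$ both factors $p(p+1)$ and $\int r^{p-1}(f-f(0,0))$ are negative. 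Hence the desired inequality is equivalent to
\[
\Phi_{bb}(1,0)\le \frac{\Phi_{ab}(1,0)^2}{\Phi_{aa}(1,0)}=\frac{(p+1)\,\Phi_b(1,0)^2}{p\,\Phi(1,0)} .
\]
This is precisely $\frac{d^2}{db^2}\,\bigl(p\,\Phi(1,b)\bigr)^{-1/p}\big|_{b=0}\ge 0$, once the signs of $p$ and $p\Phi>0$ are tracked. So the statement reduces to a one-variable convexity (equivalently, concavity) statement for $b\mapsto \Phi(1,b)$ raised to the power $-1/p$, at $b=0$.

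\textbf{Step 2: the one-variable inequality via Proposition~\ref{p:pre}.} Substituting $r=e^{t}$ gives
\[
\Phi(1,b)=\int_\R e^{pt}\bigl(f(e^t,e^tb)-\mathbf 1_{\{p<0\}}f(0,0)\bigr)\dlat t .
\]
I would aim to write the needed power of $\Phi(1,b)$ as a marginal of a jointly log-concave function. Following Ball, the natural way is to write $r^{p-1}$ as a radial integral over extra variables: $r^{p-1}\propto\int \mathbf 1_{\{|z|\le r\}}\dlat z$ when $p$ is an integer, or, in general, to use the layer-cake identity $f(ra,rb)-f(0,0)=-\int_0^{f(0,0)}\mathbf 1_{\{f(ra,rb)< \tau\}}\dlat\tau$. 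Either way, one realises $\Phi(1,b)$ as a volume-type functional of the superlevel sets $\{f\ge\tau\}$. These are convex sets containing the origin, since the maximum is at $(0,0)$. Their radial functions in direction $(1,b)$ enter through $\rho_\tau(1,b)^{p}$. Prékopa's theorem (Proposition~\ref{p:pre}) applied in $(b,\tau)$, or in $(b,t)$ after the logarithmic change of variables, should then give the required concavity in $b$. Differentiating twice at $b=0$ gives the inequality of Step 1.

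\textbf{Main obstacle.} The hard step is Step 2 in the regime $p\in(-1,0)$. There the weight $r^{p-1}$ is not integrable at $0$ and $f-f(0,0)$ has the wrong sign for a direct log-concave marginal, so Ball's dimension-lifting trick has no counterpart. I would first try a two-dimensional Prékopa argument on $(b,t)\mapsto e^{pt}\bigl(f(0,0)-f(e^t,e^tb)\bigr)$, after rewriting $f(0,0)-f$ as an integral of indicators of complements of convex level sets. If that fails, I would instead verify the infinitesimal (Hessian) form of Prékopa directly, using $\nabla^2\log f\le 0$, and integrate against $r^{p+1}$, using the integrations by parts of Step 1 to move derivatives off the weight.
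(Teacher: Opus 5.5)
\textbf{Step 1 is correct but circular.} Your integrals are indeed the Hessian entries of your $(-p)$-homogeneous $\Phi$ at $(1,0)$, and $\Phi_{aa}(1,0)=p(p+1)\Phi(1,0)>0$. Euler's relation turns the inequality into $p\,\Phi\,\Phi_{bb}\le(p+1)\Phi_b^2$ at $(1,0)$, i.e.\ into $\frac{\dlat^2}{\dlat b^2}\bigl(p\,\Phi(1,b)\bigr)^{-1/p}\big|_{b=0}\ge 0$. But up to the positive factor $f(0,0)^{1/p}$, the function $\bigl(p\,\Phi(1,b)\bigr)^{-1/p}$ is exactly $\|(1,b)\|_{K_p(f)}$. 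So you have only shown that the inequality is \emph{equivalent} to convexity of the two-dimensional Ball gauge of $f$. That convexity is precisely what the paper uses this theorem to prove: its smooth-case computation for $K_p(g)$ is your Step 1 read in the other direction. For $p>0$ you could close the loop by quoting Ball's theorem. For $p\in(-1,0)$, however, the statement you reduced to is the paper's main result, and Step 2 gives no argument for it.

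\textbf{The tools sketched in Step 2 do not work.}
\begin{itemize}
\item The function $(b,t)\mapsto e^{pt}f(e^t,e^tb)$ is not jointly log-concave in general, because $(b,t)\mapsto(e^t,e^tb)$ is not affine.
\item The function $f(0,0)-f$ is not log-concave in general; take $f$ with a flat top.
\item Any Pr\'ekopa marginal in $b$ can only yield log-concavity of $b\mapsto|\Phi(1,b)|$. For $p<0$ this points the wrong way, since you need $|\Phi(1,\cdot)|^{1/|p|}$ to be convex. For $p>0$ it is false: for $f$ approximating $\chi_{[0,1]^2}$ one gets $\Phi(1,b)\propto\min\{1,b^{-1}\}^p$, whose logarithm is convex on $(1,\infty)$.
\end{itemize}

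\textbf{The missing idea is to shear instead of dilate.} Set $\Psi(a,b)=\int_0^\infty r^{p+1}f(r+a,b)\,\dlat r$. Differentiating under the integral sign, $\nabla^2\Psi(0,0)$ has the same three integrals as its entries. Now, however, $(a,b,r)\mapsto\chi_{[0,\infty)}(r)\,r^{p+1}f(r+a,b)$ is log-concave for every $p>-1$: $f$ is composed with a linear map, and the weight exponent is $p+1>0$. Pr\'ekopa's theorem therefore makes $\Psi$ log-concave.

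It follows that $\nabla^2\Psi\preceq\Psi^{-1}\nabla\Psi\otimes\nabla\Psi$ at interior points, and by continuity also at $(0,0)$. Hence $\nabla^2\Psi(0,0)$ is negative semi-definite on the line orthogonal to $\nabla\Psi(0,0)$, so it has at most one positive eigenvalue. Moreover, $\Psi_{aa}(0,0)=-(p+1)\int_0^\infty r^p f_r(r,0)\,\dlat r\ge 0$, which is your own Step 1 computation. A symmetric $2\times 2$ matrix with a nonnegative diagonal entry and at most one positive eigenvalue has nonpositive determinant, which is the claim.

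This argument treats both signs of $p$ at once and never uses convexity of a Ball body. That is exactly why it can serve as the engine for extending Ball's theorem to $p\in(-1,0)$.
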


As a principal application, we resolve a longstanding question of Richard Gardner and Gaoyong Zhang concerning the convexity of radial $p$th mean bodies. In the seminal work \cite{GZ98},  R. Gardner and G. Zhang introduced the radial $p$th mean bodies $R_p K$ of a convex body  $K\subset \R^n$, where $p>-1$. They form a family of convex bodies interpolating between the difference body and the polar projection body. Much work has been done on generalizing the bodies $R_p K$ to larger settings, see \cite{LRZ22,LP25,HLPRY25, XGL14,LX24,LMU25}. Nevertheless, the convexity of $R_p K$ for $p\in (-1,0)$ has remained open for nearly three decades. 

\begin{ques}
\label{q:the_question}
    Let $K\subset \R^n$ be a convex body. For $p\geq 0$, the radial $p$th mean bodies $R_p K$ are convex \cite[Theorem 4.3]{GZ98}. Is the same true for $p\in (-1,0)$?
\end{ques}
 This is trivially true when $n=1$, and was only recently shown to be true for $n=2$ by J. Haddad \cite{JH26}. We answer Question~\ref{q:the_question} in the affirmative in all dimensions. Our approach is fundamentally different than that in \cite{JH26}.

\begin{thm}
\label{t:main}
    Let $K\subset \R^n$ be a convex body. Then, for every $p>-1$, $R_p K$ is a convex body symmetric about the origin.
\end{thm}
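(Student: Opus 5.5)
The plan is to realize $R_p K$ as one of the extended Ball bodies and then apply Theorem~\ref{t:all_g}. Recall Gardner--Zhang's definition: for $u\in\s^{n-1}$,
\[
\rho_{R_pK}(u)=\left(\frac{1}{\vol(K)}\int_K \rho_{K-x}(u)^p\,\dlat x\right)^{1/p},\quad p\neq 0,
\]
with the geometric-mean version at $p=0$. We take $g$ to be the covariogram $g_K(x)=\vol(K\cap(K+x))$. By the Brunn--Minkowski inequality, $g_K^{1/n}$ is concave on its support $DK=K-K$, so $g_K$ is log-concave. It is also continuous, integrable, and even, and it attains its maximum $\vol(K)$ at $o$. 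Therefore $g_K\in\LC$ and satisfies the hypotheses of Theorem~\ref{t:all_g}.

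The key computation expresses $\rho_{R_pK}$ through radial integrals of $g_K$. Fix $u$ and write $\rho_{K-x}(u)=\sup\{r\ge0: x+ru\in K\}$. By Fubini, $g_K(ru)=\vol\{x\in K:\rho_{K-x}(u)\ge r\}$, so $g_K(ru)$ is the distribution function of $\rho_{K-x}(u)$ on $K$, i.e. $-\dlat g_K(ru)$ is its law times $\vol(K)$. For $p>0$, the layer-cake formula gives
\[
\int_K\rho_{K-x}(u)^p\dlat x=p\int_0^\infty r^{p-1}g_K(ru)\dlat r.
\]
For $p\in(-1,0)$, we instead write $\rho^p-0=\int_0^\infty (\ldots)$ using $g_K(ru)-g_K(o)$: integrating by parts with $-\frac{\dlat}{\dlat r}g_K(ru)$ as the density gives
\[
\int_K \rho^p=\int_0^\infty r^p\,(-\dlat g_K(ru))=-p\int_0^\infty r^{p-1}(g_K(o)-g_K(ru))\dlat r.
\]
This integration by parts is legitimate because near $0$ we have $g_K(o)-g_K(ru)=O(r)$ (Lipschitz), so $r^{p}(g_K(o)-g_K(ru))\to0$ when $p>-1$. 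At infinity, $g_K(ru)$ vanishes for large $r$ and $r^p\to0$. In both cases, then,
\[
\rho_{R_pK}(u)^p=\frac{p}{g_K(o)}\int_0^\infty r^{p-1}(g_K(ru)-g_K(o))\dlat r
\]
for $p<0$, and the analogous formula without the subtracted term for $p>0$. This matches the quantity in the abstract, which is precisely $\|u\|_{K_p(g_K)}^{-1}$, up to the normalization of Definition~\ref{def:keith_ball_bodies}. I will check that the constant conventions agree exactly; if not, $R_pK$ is a positive dilate of $K_p(g_K)$, which does not affect convexity. The case $p=0$ follows either by the analogous logarithmic identity, by taking limits ($R_pK\to R_0K$ in the radial metric, and limits of convex bodies are convex), or from Gardner--Zhang directly.

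With this identity, Theorem~\ref{t:all_g} shows that $\|\cdot\|_{R_pK}$ is a nonnegative, $1$-homogeneous, convex function, so $R_pK$ is a convex set. Boundedness holds with nonempty interior because $\rho_{R_pK}(u)$ is positive and finite: the bounds $0<\rho_{K-x}(u)\le \operatorname{diam}K$ on the interior of $K$, together with integrability of $\rho^p$ for $p>-1$ (the set where $\rho_{K-x}(u)<\varepsilon$ has measure $O(\varepsilon)$), give $\rho_{R_pK}$ bounded above and below. Continuity of the radial function follows from continuity of $g_K$ and dominated convergence, so $R_pK$ is compact. Origin symmetry follows from $g_K(-x)=g_K(x)$, which makes the radial formula invariant under $u\mapsto -u$. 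The main obstacle I expect is the careful justification of the integration by parts and the normalization for $p\in(-1,0)$; all the deep content sits in Theorem~\ref{t:all_g}.
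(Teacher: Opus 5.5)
Your proposal is correct and follows the paper's route: you identify $R_pK=K_p(g_K)$ via the layer-cake/Fubini computation (the content of Proposition~\ref{p:equivalence}), invoke the Ball-body convexity theorem, and get symmetry from the evenness of $g_K$. The only difference is that you apply Theorem~\ref{t:all_g} and then verify positivity, finiteness and continuity of the radial function by hand, whereas the paper notes $g_K\in\LC^0$ and applies Theorem~\ref{t:ball_expanded} (with Lemma~\ref{l:star_body}), which gives the convex-body property directly.
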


A primary motivation for the study of radial $p$th mean bodies is the fact that as $p\to +\infty$, the bodies $R_p K$ converge (in, say, the Hausdorff metric) to $$DK=\{x\in \R^n: K\cap (K+x)\neq \emptyset\},$$ which is the \textit{difference body} of $K$, and as  $p\to (-1)^+,$ the dilated bodies $(1+p)^\frac{1}{p}R_p K$ converge to $$\Pi^\circ K =\left\{x\in  \R^n:|x|\vol_{n-1}\left(P_{\left(\frac{x}{|x|}\right)^\perp}K\right) \leq 1\right\},$$ the \textit{polar projection body} of $K$. Here, we denote by $\vol_k$ the $k$-dimensional Lebesgue measure and $P_{\theta^\perp} K$ denotes the orthogonal projection of $K$ onto $\theta^\perp=\left\{x\in\R^n:\langle x,\theta\rangle=0\right\}$ for $\theta\in\s^{n-1}$. In turn, $\s^{n-1}$ is the unit Euclidean sphere in $\R^n$.

The radial $p$th mean bodies were shown to satisfy reverse affine isoperimetric inequalities, with extremizers being simplexes \cite[Theorem 5.5]{GZ98}. As a consequence of the above limiting procedures, these inequalities include the renowned Rogers-Shephard inequality \cite{RS57} for $DK$ and Zhang's projection inequality \cite{GZ91_2} for $\Pi^\circ K$. More recently, J. Haddad and M. Ludwig \cite{HL25,HL24} established affine isoperimetric-type inequalities for $R_p K$, with extremizers being ellipsoids. At the limit, they recover the Petty Projection inequality \cite{CMP85} for $\Pi^\circ K$. 

We now turn to our extension of K. Ball's bodies. We will need some definitions.
\begin{defn}
\label{def:star}
    A set $L\subset \R^n$ is a \textit{star-shaped set} (with respect to the origin) if $o\in L$ and $[o,x]\subset L$ for all $x\in L$. Furthermore, $L$ is a \textit{star body} if it is a star-shaped set that is compact with non-empty interior, and if its Minkowski functional, given by
    \[
\|x\|_L=\inf\{t>0:x\in tL\},
    \]
    is continuous on $\R^n$.
\end{defn}\noindent
Clearly, every convex body containing the origin is a star body. A priori, a star body $L$ is convex if and only if its Minkowski functional is convex. We will explicitly use the terminology \textit{gauge} for convex Minkowski functionals. With these definitions available, we recall a classical result by K. Ball. For this endeavor, we specialize to the following subset of $\LC$:
\[
\LC^0:=\left\{g\in \LC: \max g = g(o) \text{ and } o\in \operatorname{int}\left(\supp(g)\right)\right\},
\]
i.e. those log-concave functions attaining their maximum at the origin, which, in turn, is strictly in the interior of the support of $g$. Here, the support of $g$ is given by
\[
\supp(g)=\overline{\{x\in\R^n:g(x)>0\}},
\]
which, for every $g\in \LC$, is a potentially unbounded convex set with non-empty interior. 
\begin{prop}[Theorem 5 in \cite{Ball88}]
\label{p:radial_ball}
   If $f \in \LC^0$ then, for every $p> 0,$ the function on $\R^{n}$ given by 
    \begin{equation}
    \label{eq:keith_ball_body}
   x\mapsto\left(\frac{p}{f(o)}\int_0^\infty f(rx)r^{p-1}\dlat r\right)^{-\frac{1}{p}}\end{equation}
    is a gauge, whose unit ball $K_p(f)$ is a convex body containing the origin in its interior.
\end{prop}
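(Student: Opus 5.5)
The plan is Ball's classical argument, reduced to a two-dimensional statement about Prékopa's theorem. Write $\rho(x)=\left(\frac{p}{f(o)}\int_0^\infty f(rx)r^{p-1}\dlat r\right)^{-1/p}$. Three properties come directly from the definition, and I would dispose of them first.

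\textbf{Homogeneity.} Substituting $r\mapsto r/t$ gives $\rho(tx)=t\rho(x)$ for $t>0$.

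\textbf{Positivity and finiteness.} Since $o\in\operatorname{int}\supp(f)$ and $f$ is log-concave, $f$ is bounded below by a positive constant on a neighbourhood of $o$. Hence $\int_0^\infty f(rx)r^{p-1}\dlat r>0$ for $x\neq 0$. Integrability of a log-concave function gives exponential decay, $f(y)\le Ae^{-B|y|}$, so the integral is finite and $\rho(x)\in(0,\infty)$.

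\textbf{Continuity.} This follows from continuity of $f$ in the interior of its support together with dominated convergence. The only subtlety is on rays that hit the boundary of the support, where one uses upper semicontinuity and the fact that the boundary has measure zero along almost every ray.

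Since $\rho$ is positive, continuous and $1$-homogeneous, its unit ball is a star body containing $o$ in its interior. It remains to prove that $\rho$ is convex. For a positive $1$-homogeneous function, convexity is equivalent to $\rho(x+y)\le\rho(x)+\rho(y)$. By homogeneity this is the same as: for $x,y$ with $\rho(x)=\rho(y)=1$ and $\lambda\in(0,1)$, one has $\rho((1-\lambda)x+\lambda y)\le 1$. This is a statement on the two-dimensional plane spanned by $x$ and $y$, so I would restrict $f$ to that plane, i.e. assume $n=2$.

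For convexity I would follow Ball. Define $F(u,v)=f\bigl(\tfrac{u x+ v y}{1}\bigr)$ on the quadrant $u,v\ge0$ and pass to polar-type coordinates along the segment from $x$ to $y$. Put $z_\lambda=(1-\lambda)x+\lambda y$ and $\phi(\lambda)=\int_0^\infty f(rz_\lambda)r^{p-1}\dlat r$. The goal is that $\phi(\lambda)^{-1/p}$ is convex on $[0,1]$, or at least lies below the chord. Ball's device is a weighted Prékopa--Leindler inequality: for $h_0(r)=f(rx)$, $h_1(r)=f(ry)$ and $m(r)=f(rz_\lambda)$, log-concavity gives
\[
m\bigl(\text{harmonic-type mean of } r,s\bigr)\ge h_0(r)^{1-\mu}h_1(s)^{\mu}.
\]
Concretely, for $r,s>0$ the point $\frac{rs}{(1-\lambda)s+\lambda r}z_\lambda$ is a convex combination of $rx$ and $sy$, with weights $\mu=\frac{\lambda r}{(1-\lambda)s+\lambda r}$.

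The core step is then Ball's one-dimensional lemma. If $h_0,h_1,m:[0,\infty)\to\R_+$ satisfy $m\bigl((\frac{1-\lambda}{r}+\frac{\lambda}{s})^{-1}\bigr)\ge h_0(r)^{\frac{(1-\lambda)/r}{(1-\lambda)/r+\lambda/s}}h_1(s)^{\frac{\lambda/s}{(1-\lambda)/r+\lambda/s}}$, and all three have maximum value $f(o)$ at $0$, then
\[
\Bigl(p\int m\, r^{p-1}\Bigr)^{-1/p}\le(1-\lambda)\Bigl(p\int h_0 r^{p-1}\Bigr)^{-1/p}+\lambda\Bigl(p\int h_1 r^{p-1}\Bigr)^{-1/p}.
\]
I would prove this by normalizing $f(o)=1$ and writing $\int_0^\infty h(r)r^{p-1}\dlat r=\int_0^1 \frac{1}{p}\bigl(\ell_h(t)\bigr)^p\dlat t$, where $\ell_h(t)=|\{h\ge t\}|$ is the length of the superlevel set; for $h$ radially decreasing this is an interval $[0,\ell_h(t)]$. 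The hypothesis gives $\{m\ge t\}\supset$ the harmonic-combination of $\{h_0\ge t\}$ and $\{h_1\ge t\}$. Hence $\ell_m(t)^{-1}\le(1-\lambda)\ell_{h_0}(t)^{-1}+\lambda\ell_{h_1}(t)^{-1}$, and by concavity of $x\mapsto x^{-1}$ in the mean, $\ell_m(t)\ge M_{-1}^{\lambda}(\ell_{h_0}(t),\ell_{h_1}(t))$. Integrating $\ell^p$ over $t$ and applying the reverse Minkowski/Hölder-type inequality for the $-1$ mean, $\bigl(\int M_{-1}(a,b)^p\bigr)^{-1/p}\le M_{1}$ of $\bigl(\int a^p\bigr)^{-1/p}$ and $\bigl(\int b^p\bigr)^{-1/p}$, yields the claim.

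The main obstacle is this last inequality: passing from the pointwise harmonic-mean bound on level-set lengths to the stated bound on the $L^p$ quantities with the exponent $-1/p$. I expect it to follow from Hölder's inequality applied to $a^p$ and $b^p$ with the weights arising from the harmonic mean. Ball's own proof handles this by a change of variables that parametrizes both functions by a common level. That is what I would try first, using $r\mapsto$ the point where $h_0$ and $h_1$ carry equal normalized mass. Monotonicity is used throughout: by log-concavity and the maximum at $o$, $r\mapsto f(rx)$ is nonincreasing.
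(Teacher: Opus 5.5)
\textbf{There is a genuine gap in your proof of Ball's one-dimensional lemma.} Your reduction to the plane is correct, as is the identity $\frac{rs}{(1-\lambda)s+\lambda r}z_\lambda=(1-\mu)rx+\mu sy$ with $\mu=\frac{\lambda r}{(1-\lambda)s+\lambda r}$, and so is your statement of the lemma. The failure is in how you prove it. Comparing level sets at a \emph{common} height $t$ only gives $\ell_m(t)\ge M^\lambda_{-1}(\ell_{h_0}(t),\ell_{h_1}(t))$. The ``reverse Minkowski'' inequality you then need, $\|M^\lambda_{-1}(a,b)\|_{L^p}\ge M^\lambda_{-1}(\|a\|_{L^p},\|b\|_{L^p})$, is false.

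For $p\ge1$ it actually goes the other way. $M^\lambda_{-1}$ is concave, $1$-homogeneous and nondecreasing, hence an infimum of forms $\alpha a+\beta b$ with $\alpha,\beta\ge0$. Minkowski's inequality then gives $\|M^\lambda_{-1}(a,b)\|_{L^p}\le M^\lambda_{-1}(\|a\|_{L^p},\|b\|_{L^p})$, strictly unless $a$ and $b$ are proportional.

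No H\"older argument can rescue this, because the equal-level bound is genuinely too weak in your setting. Take $n=2$, $f(u,v)=e^{-|u|}\chi_{\{|v|\le1\}}\in\LC^0$, $p=1$, $x=e_1$, $y=e_2$, $\lambda=\frac12$. Then $h_0(r)=e^{-r}$, $h_1=\chi_{[0,1]}$, $\ell_{h_0}(t)=\log(1/t)$ and $\ell_{h_1}\equiv1$, so $\rho(x)=\rho(y)=1$ and you need $\int_0^1\ell_m\,\dlat t\ge1$. The equal-level bound only yields $\int_0^1\frac{2\log(1/t)}{1+\log(1/t)}\,\dlat t=\int_0^\infty\frac{2u}{1+u}e^{-u}\,\dlat u\approx0.81$, whereas the true value is $\int_0^2e^{-r/2}\,\dlat r\approx1.26$. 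What gets thrown away is exactly the hypothesis at pairs $(r,s)$ with $h_0(r)\neq h_1(s)$.

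\textbf{The missing idea is the one you mention only in your last sentence.} It is not a common-level parametrization, and it replaces the level-set step rather than completing it. Set $A=\int_0^\infty h_0r^{p-1}\dlat r$ and $B=\int_0^\infty h_1r^{p-1}\dlat r$. Define increasing $w,z$ on $(0,1)$ by $\frac1A\int_0^{w(t)}h_0r^{p-1}\dlat r=\frac1B\int_0^{z(t)}h_1r^{p-1}\dlat r=t$, and put $y=M^\lambda_{-1}(w,z)$. Then $\frac{y'}{y}=\alpha\frac{w'}{w}+\beta\frac{z'}{z}$, where $\alpha=1-\mu$ and $\beta=\mu$ are your exponents at $(r,s)=(w,z)$, and $y=\frac{\alpha w}{1-\lambda}=\frac{\beta z}{\lambda}$. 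The hypothesis and weighted AM--GM give $m(y)y^{p-1}y'\ge\bigl(\frac{\alpha A^{1/p}}{1-\lambda}\bigr)^{p\alpha}\bigl(\frac{\beta B^{1/p}}{\lambda}\bigr)^{p\beta}$. Minimizing over $\alpha\in[0,1]$ bounds this below by $M^\lambda_{-1}(A^{1/p},B^{1/p})^p$. Integrating in $t$ proves the lemma, with no need for monotonicity or normalization at $0$.

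With this repair your proof is Ball's original one, which is a genuinely different route from the paper's. The paper does not reprove the statement this way. It recovers it, together with $p\in(-1,0)$, from Pr\'ekopa's theorem applied to $\Phi(a,b)=\int_0^\infty r^{p+1}f(r+a,b)\,\dlat r$, which gives $\det\nabla^2\Phi(0,0)\le0$ (Theorem~\ref{t:theorem}), combined with a second-derivative convexity test and smoothing. Ball's route is more elementary for $p>0$. However, it transports the positive mass $h\,r^{p-1}\dlat r$, and that is exactly what is unavailable for $p\in(-1,0)$.
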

The original result by K. Ball actually holds for all $f\in \LC$, but then the conclusion of the statement must be adjusted slightly. This formulation unifies it with our next theorem. We also remark that if one restricts the range of $p$, it was shown by S. Bobkov \cite{LBobkov} that Proposition~\ref{p:radial_ball} holds for a larger class of functions, the so-called $s$-concave functions, $s<0$.

Equation \eqref{eq:keith_ball_body} shows that for every $p >0$, the mapping $f\mapsto K_p(f)$ is an embedding of $\LC^0$ into the set of convex bodies. Motivated by investigations in \cite{FLM20,KPY08}, we define the following extension of K. Ball's bodies to $p\in (-1,0)$. We recall that given a monotonically increasing function $h$ on an interval $(a,b)$, we can define its Lebesgue-Stieltjes measure $\dlat \,h$.
\begin{defn}
\label{def:keith_ball_bodies}
Let $g \in \LC$ have maximum at the origin. Then, its $p$th Ball body is the star-shaped set $K_p(g)$ whose Minkowski functional is given by $\|o\|_{K_p(g)}=0$, and, for $x\in\R^n\setminus\{o\},$
\begin{equation}
    \label{eq:kbb}
    \begin{split}
    \|x\|_{K_p(g)}=\begin{cases}
    \left(\frac{p}{g(o)}\int_0^\infty r^{p-1}(g(rx)-g(o))\; \dlat r\right)^{-\frac{1}{p}}, & p\in (-1,0),
    \\
    \exp\left(-\frac{1}{g(o)}\int_0^\infty\log(r)\; \dlat \left(-g(rx)\right)\right), &p=0,
    \\
    \left(\frac{p}{g(o)}\int_0^\infty g(rx)r^{p-1}\, \dlat r\right)^{-\frac{1}{p}}, &p>0,
    \\
    R_g\left(x\right)^{-1}, & p=\infty.
    \end{cases}
    \end{split}
\end{equation}
Here, for $x\neq o$,  $R_g(x)=\sup\{r\geq 0:g(rx)>0\}$.
\end{defn}
We discuss momentarily the measure $\dlat\bigl(-g(rx)\bigr)$. Notice
that the function $r\mapsto g(rx)$ is supported on $[0,R_g(x)]$. On
the interior of this interval, 
\[
\dlat\bigl(-g(rx)\bigr)
=
-\frac{\partial}{\partial r}g(rx)\,\dlat r
\]
almost everywhere, where $\frac{\partial}{\partial r}$ denotes the one-sided derivative of $g(rx)$ in $r$, which exists since the convex function $r\mapsto-\log g(rx)$ has one-sided derivatives everywhere on its domain. If $R_g(x)<\infty$, a jump to zero at the outer
endpoint may produce an atom there. At the left endpoint, an atom may occur only in the degenerate case when $g(rx)=0$ for every $r>0$.

Notice that we can unify the case $p>0$ and $p\in (-1,0)$ of the definition of $K_p(g)$ via integration by parts: $\|o\|_{K_p(g)}=0$ and, for $x\in \R^n\setminus\{o\}$,
\begin{equation}
    \|x\|_{K_p(g)}=\left(\frac{1}{g(o)}\int_0^\infty r^p\dlat\left(-g(rx)\right)\right)^{-\frac{1}{p}}, \qquad p>-1,\; p\neq 0,
    \label{eq:unified}
\end{equation}
and then one can see more clearly that the $p=0$ case of \eqref{eq:unified} follows by taking the limit.

The following well-known characterization of integrable log-concave functions shows (see, e.g. \cite{BK07}) that $\|\cdot\|_{K_p(g)}$ is finite for $p>0$, and we will demonstrate the $p\in (-1,0)$ case in Lemma~\ref{l:star_body}.
\begin{prop}
\label{p:integrable}
    Let $f:\R^n\rightarrow \R_+$ be a non-identically zero, upper semi-continuous, log-concave function. The following properties are equivalent:
    \begin{enumerate}
        \item[(i)] Integrability: $f \in \LC$,
        \item[(ii)] Coercivity: there exist constants $A,B>0$ such that $$f(x)\le A\, e^{-B\, |x|}, \quad x\in \R^n.$$
    \end{enumerate}
    In particular, $f$ has finite moments of all orders.
\end{prop}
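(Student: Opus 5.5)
The plan is to prove the two implications separately, reducing the nontrivial one to a one-dimensional statement about concave functions.

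\emph{(ii)$\Rightarrow$(i).} This direction is immediate. Since $f$ is non-identically zero, upper semi-continuous and log-concave, it is measurable and $\int f>0$: the set $\{f>0\}$ is convex, and on it $f$ is bounded below near a point where it is positive, on a set of positive measure if the support is full-dimensional. If the support were lower-dimensional, the integral would vanish, so one should either assume non-degenerate support or note that (ii) forces finiteness only. Finiteness follows from $\int A e^{-B|x|}\,dx<\infty$.

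\emph{(i)$\Rightarrow$(ii).} Write $f=e^{-\phi}$ with $\phi$ convex, lower semi-continuous, and taking values in $(-\infty,\infty]$. The key claim is that $\phi$ grows at least linearly. I would proceed in three steps.

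\textbf{Step 1: the superlevel sets are bounded.} For $t>0$ small, the superlevel set $L_t=\{f\ge t\}$ is closed and convex, and it has non-empty interior for small $t$ because $\int f>0$. Moreover it is bounded. Otherwise $L_t$ would contain a ray $x_0+\R_+ v$ together with a small ball around $x_0$. By convexity, the convex hull of that ball and the ray is contained in $L_t$, which gives a half-infinite cylinder-like cone of infinite volume on which $f\ge t$. This contradicts $\int f<\infty$.

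\textbf{Step 2: linear growth of $\phi$.} Fix such a $t$, and choose $x_0\in\operatorname{int}L_t$ with $f(x_0)=M>t$. Since $f$ is upper semi-continuous and log-concave, it is bounded, so $M$ can be taken near the supremum; any positive value above $t$ suffices. By Step 1, $L_t\subset B(x_0,R)$ for some $R$. Now take $|x-x_0|>R$. The point $y=x_0+R\frac{x-x_0}{|x-x_0|}$ lies on the segment $[x_0,x]$, so $\phi(y)\ge -\log t$. Convexity of $\phi$ along the segment gives
\[
\phi(x)\ge \phi(x_0)+\frac{|x-x_0|}{R}\bigl(\phi(y)-\phi(x_0)\bigr)\ge -\log M+\frac{|x-x_0|}{R}\log\frac{M}{t}.
\]
Hence $f(x)\le M\,e^{-B|x-x_0|}$ with $B=R^{-1}\log(M/t)>0$.

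\textbf{Step 3: adjust constants.} On the ball $|x-x_0|\le R$, the function $f$ is bounded by $\sup f<\infty$. Boundedness of $\sup f$ follows from upper semi-continuity on the compact set $L_t$, together with the fact that $f<t$ off $L_t$. Finally, replace $|x-x_0|$ by $|x|$ using the triangle inequality, absorbing the factor $e^{B|x_0|}$ into $A$.

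The moments claim follows because $\int |x|^k A e^{-B|x|}\,dx<\infty$ for every $k$.

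The main obstacle is Step 1, specifically the existence of a level $t$ whose superlevel set has non-empty interior and the volume argument. I would handle it by choosing $t<\sup f$ with $\vol(L_t)>0$, which exists since $\int f=\int_0^\infty \vol(L_s)\,ds>0$. A positive-volume convex set has non-empty interior, and a convex set with non-empty interior containing a ray has infinite volume.
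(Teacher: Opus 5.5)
The paper gives no proof of this proposition; it cites it as a well-known fact. Your argument for (i)$\Rightarrow$(ii) is the standard one: finiteness of $\int f$ forces a positive-volume superlevel set to be bounded, and convexity of $\phi=-\log f$ along rays then gives linear growth. It is correct.

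Two small points should be written out.
\begin{itemize}
\item \emph{Existence of $x_0$.} You need $x_0\in\operatorname{int}L_t$ with $f(x_0)>t$. This follows from log-concavity. Take $w\in\operatorname{int}L_t$ and $z$ with $f(z)>t$. For $\lambda\in(0,1)$, the point $(1-\lambda)w+\lambda z$ is interior to $L_t$, and $f$ there is at least $t^{1-\lambda}f(z)^{\lambda}>t$. Alternatively, fix $x_0$ first and then lower $t$ below $f(x_0)$.
\item \emph{Choice of $R$.} Take $R$ strictly larger than $\max_{z\in L_t}|z-x_0|$. Then the point $y$ at distance $R$ from $x_0$ genuinely lies outside $L_t$, so $\phi(y)>-\log t$.
\end{itemize}

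Your caveat about (ii)$\Rightarrow$(i) is right, and it is a defect of the statement rather than of your proof. The function $f=\chi_{\{o\}}$ is upper semi-continuous, log-concave, not identically zero, and satisfies (ii). Yet $\int f=0$, so $f\notin\LC$. Your first sentence there still overclaims $\int f>0$ before you qualify it.

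The implication does hold once $\{f>0\}$ has non-empty interior. Pick $n+1$ affinely independent points $x_i$ in $\{f>0\}$. By log-concavity, $f\ge\min_i f(x_i)>0$ on their simplex, so $\int f>0$. Without that assumption, (ii) only yields finiteness of the integral and of all moments.

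This does not affect the paper. It only uses (i)$\Rightarrow$(ii) and the fact that an exponential bound gives finite integrals, as in the Moreau-envelope lemma.
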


We will use Proposition~\ref{p:integrable} repeatedly, often without explicit reference; for example, when performing integration by parts. The following theorem extends Proposition~\ref{p:radial_ball} to $p \in (-1,0)$.
\begin{thm}
\label{t:ball_expanded}
    Let $g\in \LC^0$. Then, for all $p>-1$, $K_p(g)$ is a convex body; equivalently, $\|\cdot\|_{K_p(g)}$ is a gauge.
\end{thm}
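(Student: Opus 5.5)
Throughout, write $\Phi_p(x)=\|x\|_{K_p(g)}$; the plan is to reduce convexity of $\Phi_p$ to the two-dimensional inequality of Theorem~\ref{t:theorem}. The case $p>0$ is Proposition~\ref{p:radial_ball}. The case $p=0$ should follow from the case $p\neq 0$: by \eqref{eq:unified}, $\Phi_p\to\Phi_0$ pointwise as $p\to 0$, and a pointwise limit of gauges is convex. So the real work is $p\in(-1,0)$, though the argument below treats all $p>-1$, $p\neq0$, uniformly.

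\textbf{Step 1 (star body).} I first show that $\Phi_p$ is finite, positive away from $o$, continuous, and $1$-homogeneous. Homogeneity follows from the substitution $r\mapsto r/\lambda$. Finiteness for $p\in(-1,0)$ uses two facts. Near $0$, the difference $g(rx)-g(o)$ is $O(r)$, because $-\log g$ is convex and finite near $o\in\operatorname{int}\supp g$; this makes $r^{p-1}\cdot r$ integrable at $0$ since $p>-1$. Near $\infty$, the integrand behaves like $r^{p-1}g(o)$, which is integrable since $p<0$. Positivity holds because $g(rx)-g(o)\le 0$ and $g(rx)<g(o)$ for large $r$, by Proposition~\ref{p:integrable}. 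Continuity follows by dominated convergence with the same bounds.

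\textbf{Step 2 (regularization).} Convexity of a $1$-homogeneous function is equivalent to convexity of its restriction to every two-dimensional plane through the origin. Fix such a plane and a point $x$ in it. I approximate $g$ by smooth, log-concave functions with exponential decay of derivatives, for instance by convolving with a Gaussian and multiplying by $e^{-\varepsilon|x|^2}$. I then re-center so that the maximum is at $o$; this can be arranged, for example, by adding a small linear tilt or by restricting attention to the case where the maximum is preserved. Since convexity survives pointwise limits, it suffices to treat smooth $g$.

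\textbf{Step 3 (second-derivative test).} For smooth $g$ it suffices to show that $D^2\Phi_p(x)\ge 0$ in the plane. By homogeneity, $D^2\Phi_p(x)$ already annihilates $x$, so only the direction transversal to $x$ needs checking. Write $F(x)=\int_0^\infty r^p\,\dlat(-g(rx))$, so that $\Phi_p=(F/g(o))^{-1/p}$. Integrating by parts twice, the derivatives of $F$ in directions $u,v$ at $x$ become integrals $\int r^{p+1}\partial_{uv}g(rx)\,\dlat r$, up to constants. Now choose coordinates in which $x$ is the first axis and $y$ is the second, and set $f(r,s)=g(rx+sy)$. This $f$ satisfies hypotheses (1)–(3) of Theorem~\ref{t:theorem}, possibly after reflecting $y$ to stay in $\R_+^2$, which I would check.

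\textbf{Step 4 (the algebra — main obstacle).} The condition $D^2\Phi_p\ge0$ is equivalent to $F^{-1/p}$ being convex on lines, which is a determinant-type inequality relating $F$, $F_x$, $F_{xx}$, $F_y$, $F_{yy}$ and $F_{xy}$. Using Euler relations from the $(-p)$-homogeneity of $F$, the radial derivatives can be eliminated. What remains should reduce exactly to \eqref{eq:final_goal_updated_2d}, with the sign of $p$ absorbed by the factor $-1/p$. This bookkeeping is where I expect the difficulty: I must match the signs for $p<0$ versus $p>0$, and justify the boundary terms in the integrations by parts at $r=0$ (where $r^{p+1}\to0$ because $p>-1$) and at $r=\infty$ (handled by the exponential decay). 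Once this reduction is done, Theorem~\ref{t:theorem} gives convexity. Combined with Step 1 and the fact that $o$ lies in the interior, this shows $K_p(g)$ is a convex body.
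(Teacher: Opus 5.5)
Your Steps 1, 3 and 4 follow the paper's route. For smooth $g$ with maximum at $o$ and exponentially decaying derivatives, you reduce convexity along $x+ty$ to Theorem~\ref{t:theorem} for $f(r,s)=g(rx+sy)$ restricted to $\R_+^2$; no reflection is needed. The bookkeeping you flag as the main obstacle closes quickly. Differentiating $F(z)=p\int_0^\infty r^{p-1}g(rz)\,\dlat r$ twice under the integral (with $g(rz)-g(o)$ in place of $g(rz)$ when $p<0$) gives $F_{uv}(x)=p\int_0^\infty r^{p+1}\partial_{uv}g(rx)\,\dlat r$. The $(-p)$-homogeneity of $F$ gives $F_{xx}=p(p+1)F$ and $F_{xy}=-(p+1)F_y$ at $x$. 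Hence
\[
(1+p)F_y^2-pFF_{yy}=-\frac{1}{p+1}\left(F_{xx}F_{yy}-F_{xy}^2\right).
\]
The determinant on the right is $p^2$ times the difference of the two sides of \eqref{eq:final_goal_updated_2d}, so no sign issue arises. This is a slightly slicker version of the paper's computation of $H,\dot H,\ddot H$ via Proposition~\ref{p:chain_rule_final}.

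The genuine gap is Step 2, which is where the paper does most of its work. \textbf{First, the linear tilt does not restore the maximum at $o$ in general.} Let $\gamma_k$ be a Gaussian mollifier. The tilt making $o$ a maximizer of $(g*\gamma_k)e^{\langle a_k,\cdot\rangle}$ is $a_k=\nabla\bigl(-\log(g*\gamma_k)\bigr)(o)$. Its limit points lie in $\partial(-\log g)(o)$, and this set need not be $\{0\}$. For $g(x)=e^{-\max\{x,-2x\}}$ on $\R$ one gets $a_k\to-\frac12$, so the tilted approximants converge to $g(x)e^{-x/2}$, not to $g$, and you would prove convexity of the wrong body. ``Restricting to the case where the maximum is preserved'' is not a reduction either. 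What works is the paper's device: first replace $g$ by $ge^{-|x|^2/j}$ to make the maximizer unique, then mollify and \emph{translate} by a maximizer $x(k)$ of $g*\gamma_k$; epi-convergence forces $x(k)\to o$.

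\textbf{Second, the pointwise convergence of gauges is assumed, not proved.} ``Convexity survives pointwise limits'' presupposes $\|\cdot\|_{K_p(g_k)}\to\|\cdot\|_{K_p(g)}$. For $p\in(-1,0)$ this requires an integrable majorant of $r^{p-1}\bigl(g_k(o)-g_k(rx)\bigr)$ near $r=0$ that is uniform in $k$, whereas your Step 1 bound concerns a single $g$. The paper gets this from log-concavity along the ray plus the maximum at $o$. Pick $\eta$ with $\eta x\in\operatorname{int}\supp(g)$ and $c_1>0$ with $g_k(\eta x)\geq c_1g_k(o)$ for large $k$. Then $0\leq g_k(o)-g_k(rx)\leq g_k(o)\bigl(1-c_1^{r/\eta}\bigr)\leq Cg_k(o)\,r$ on $[0,\eta]$, and uniform exponential tails handle large $r$. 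This step uses $o\in\operatorname{int}\supp(g)$ essentially; the paper points out that no such uniform bound exists when $o\in\partial\supp(g)$. So it cannot be waved through.
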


By an approximation argument, we are able to extend the convexity of $\|\cdot\|_{K_p(g)}$ to all $g\in \LC$ which reach their maximum at the origin, i.e., pass from Theorem~\ref{t:ball_expanded} to Theorem~\ref{t:all_g}. We demonstrate in Lemma~\ref{l:star_set} that $\|\cdot\|_{K_p(g)}$ may take on the value of infinity for certain $x$ when the origin is in the boundary of the support of $g$, in which case $K_p(g)$ is not necessarily compact with non-empty interior.

Finally, we will need the following monotonicity. We say that a sequence of star-shaped sets $L_j$ converges to a star-shaped set $L$ if $\|\cdot\|_{L_j}\to \|\cdot\|_L$ point-wise on $\R^n$.
Similarly, if $\{L(p)\}$ is a collection of star-shaped sets indexed by a parameter $p$ belonging to an index set $I$, we say $p\mapsto L(p)$ is continuous if, for every $x\in\R^n$, the map $p\mapsto \|x\|_{L(p)}$ on $I$ is continuous as an extended-real-valued function. The case $p>0$ is well-known, see, e.g., \cite{BGVV14}.
\begin{thm}
\label{t:continuity}
    Let $g\in \LC$ attain its maximum at the origin. Then, for $-1<p<q\leq\infty$,
    \begin{equation}
    \label{eq:weak_chain}
    K_p(g) \subseteq K_q(g)\subseteq \lim_{p\to \infty}K_p(g)=:K_\infty(g)
    \end{equation}
     Moreover, $p\mapsto K_p(g)$ is continuous on $(-1,\infty]$. There is equality in any set-inclusion, and hence all set-inclusions, if and only if there exists a bounded, non-negative, function $\rho:\s^{n-1}\rightarrow \R_+$ such that $$g(x)=g(o)\chi_{\left[0,\rho\left(\frac{x}{|x|}\right)\right]}(|x|), \qquad x\in \R^n\setminus\{o\}.$$
     Also, the potentially unbounded convex set $K_\infty(g)$ has the following properties:
     \begin{enumerate}
         \item $\{x:g(x)>0\}\subseteq K_\infty(g)\subseteq \supp(g)$;
         \item $\operatorname{int} (K_\infty(g))=\operatorname{int}\left(\supp(g)\right)$;
         \item $\overline{K_\infty(g)}=\supp(g)$;
         \item if $g\in \LC^0$, then $K_\infty(g)$ is closed, and, therefore, $K_\infty(g)=\supp(g)$.
     \end{enumerate}
\end{thm}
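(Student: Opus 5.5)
Fix $x\neq o$ and put $\phi(r)=g(rx)/g(o)$ for $r\ge 0$. Then $\phi$ is non-increasing (log-concave with maximum at $0$, value $1$ there), and by \eqref{eq:unified}
\[
\|x\|_{K_p(g)}=\Bigl(\int_0^\infty r^p\,\dlat\mu(r)\Bigr)^{-1/p},\qquad \mu:=\dlat(-\phi),
\]
where $\mu$ is a sub-probability measure on $[0,R_g(x)]$. It has total mass $1$ unless $g$ vanishes identically on the ray. The plan is to treat the whole theorem as a statement about $L^p$-norms of the identity under a single measure $\mu$ that depends on $x$, and then read off each claim.

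\textbf{Monotonicity.} The chain \eqref{eq:weak_chain} is equivalent to the statement that $p\mapsto \|r\|_{L^p(\mu)}=(\int r^p\dlat\mu)^{1/p}$ is non-decreasing on $(-1,\infty]$, since $\|x\|_{K_p(g)}$ is its reciprocal. This follows from Jensen's (Lyapunov's) inequality for the probability measure $\mu$. For $p=0$ we use $\exp\int\log r\,\dlat\mu$, which is exactly the $p=0$ line of Definition~\ref{def:keith_ball_bodies}. For $p=\infty$ we get the essential supremum of $r$ under $\mu$, which is $R_g(x)$ because $\supp\mu$ reaches $R_g(x)$. In the degenerate case the mass sits at $0$, and every norm equals $\infty$ consistently.

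\textbf{Continuity in $p$.} For $p>-1$, $p\neq0$, we use dominated convergence. Near $r=0$ we have $r^p\le r^{-1+\epsilon}$; this is integrable against $\mu$, since after integration by parts it is $\int r^{p-1}(1-\phi)\,\dlat r$ and $1-\phi(r)=O(r)$ near $0$ by the one-sided derivative of the convex function $-\log\phi$. Near infinity we use the exponential decay from Proposition~\ref{p:integrable}. At $p=0$ we expand $\int r^p\dlat\mu=1+p\int\log r\,\dlat\mu+o(p)$. As $p\to\infty$, $\|r\|_{L^p(\mu)}\to\operatorname{ess\,sup}r$ is standard. The limit set is $K_\infty(g)$, as in the statement.

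\textbf{Equality case.} Equality in Jensen's inequality between two exponents forces $r$ to be $\mu$-a.e. constant, i.e. $\mu=\delta_{\rho}$. This means $\phi=\chi_{[0,\rho]}$ on each ray, so $g(x)=g(o)\chi_{[0,\rho(x/|x|)]}(|x|)$. For $g\in\LC$, $\rho$ is bounded. Conversely, such $g$ make all the norms coincide. Any single equality already forces this, which gives ``all inclusions''.

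\textbf{Properties of $K_\infty(g)$.} Here $\|x\|_{K_\infty}=R_g(x)^{-1}$, and $K_\infty(g)=\{x:R_g(x)\ge1\}$.
\begin{enumerate}
\item If $g(x)>0$ then $R_g(x)\ge1$, so $x\in K_\infty(g)$. If $R_g(x)\ge1$, then $rx\in\{g>0\}$ for all $r<1$, so $x\in\supp g$.
\item Since $\{g>0\}$ is convex with non-empty interior, its interior equals $\operatorname{int}\supp g$, and (1) sandwiches $\operatorname{int}K_\infty(g)$ between these equal sets.
\item Take closures in (1): a convex set with non-empty interior has closure equal to the closure of its interior.
\item If $o\in\operatorname{int}\supp g$, then $R_g$ is the radial function of a convex body containing $o$ in its interior, so it is continuous (possibly $+\infty$). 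Hence $\{R_g\ge1\}$ is closed, and (3) gives $K_\infty(g)=\supp g$.
\end{enumerate}

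\textbf{Main obstacle.} The hardest step is the bookkeeping at the edges: justifying the identification of $\mu$ as a probability measure including the atom at $R_g(x)$, the integrability of $r^p$ for $p\in(-1,0)$, and the continuity at $p=0$ and $p=\infty$ when values may be infinite. Upper semicontinuity is what fixes $\phi(R_g)$, so that the atom is handled correctly.
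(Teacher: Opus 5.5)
Your proof is correct, but it takes a genuinely different route from the paper.

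\textbf{Your route.} You push everything onto the Stieltjes measure $\mu=\dlat(-\phi)$. This is a probability measure on each ray, with all of its mass at $0$ in the degenerate case. You then read $\|x\|_{K_p(g)}^{-1}$ as the power mean $M_p(\mu)=(\int r^p\,\dlat\mu)^{1/p}$.
\begin{itemize}
\item The whole chain on $(-1,\infty]$ becomes a single application of Jensen/Lyapunov. This includes the passage through $p=0$, where the definition is literally $M_0(\mu)^{-1}=\exp(-\int\log r\,\dlat\mu)$, and the endpoint $p=\infty$, where $M_\infty(\mu)=R_g(x)$.
\item Every equality case is the strict-convexity case of Jensen, which forces $\mu=\delta_\rho$.
\end{itemize}

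\textbf{The paper's route.} It keeps the density $\psi_\theta(r)=g(r\theta)$ against Lebesgue measure.
\begin{itemize}
\item For $p>0$ it proves monotonicity by the Milman--Pajor comparison with $\chi_{[0,a]}$.
\item It transfers this to $p\in(-1,0)$ via the function $1-\psi(1/r)/\psi(0)$. In your language, this is the tail function of the push-forward of $\mu$ under $r\mapsto 1/r$.
\item It glues the two sides together by continuity at $0$ through $J(p)$.
\item It obtains the $p\to\infty$ limit from essential-support estimates.
\end{itemize}

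\textbf{What each buys.} Proposition~\ref{p:increasing_mono} is more general for $p>0$. It needs no monotonicity of $\psi$, so $\dlat(-\psi)$ need not be a positive measure. Your argument uses the monotonicity of $\phi$, which comes from log-concavity and the maximum at $o$. In exchange you get a shorter proof that handles all exponents and all equality cases uniformly. The price is the Tonelli identification of the $p\neq0$ definitions with $\int r^p\,\dlat\mu$, which the paper only records as \eqref{eq:unified}.

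\textbf{Two small points.}
\begin{itemize}
\item The statement presupposes that $K_\infty(g)$ is convex, and you never check this. It is one line: $K_\infty(g)=\{x:[o,x)\subseteq\{g>0\}\}$, and $\{g>0\}$ is convex.
\item In (4), $\supp(g)$ need not be a convex body. It is only a closed convex set with $o$ in its interior. Its gauge is finite and convex, hence continuous, and that is all you actually use. The paper's argument, that $tx\in\operatorname{int}\supp(g)$ for $t<1$, avoids even this.
\end{itemize}
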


This paper is organized as follows. First, we prove Theorem~\ref{t:continuity} in Section~\ref{sec:prove_cont}. Next, we prove Theorem~\ref{t:main} in Section~\ref{sec:main}. Afterwards, in Section~\ref{sec:ball_expand}, we prove Theorem~\ref{t:ball_expanded}. In Section~\ref{sec:all}, we prove Theorem~\ref{t:all_g}. Finally, in Section~\ref{sec:theorem}, we prove Theorem~\ref{t:theorem}.

Before beginning our investigation, we mention that the \textit{domain} of a convex function $V:\R^n\to (-\infty,\infty]$, which we recall is the set
\[
\operatorname{dom}(V) = \{x\in \R^n:V(x)<+\infty\}.
\]
We say a convex function is \textit{proper} if it has non-empty domain. Finally, we often use approximation in this work. Therefore, we will make frequent reference to the following classical fact (see, e.g. \cite[Theorem 10.8]{RTR70}). We denote by $\operatorname{relint}(A)$ the relative interior of $A\subset \R^n$.
\begin{prop}
\label{p:convex_converge}
Let $V_j:\R^n\to(-\infty,\infty]$ be a sequence of proper, convex functions
that converge pointwise to a proper function $V_\infty$. Then $V_\infty$ is
convex, and the convergence is uniform on compact subsets of $$\operatorname{relint}\left(
\bigcap_{j\in \N\cup \{\infty\}}\operatorname{dom}(V_j)
\right).$$
\end{prop}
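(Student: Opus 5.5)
The statement is Proposition~\ref{p:convex_converge}, the classical fact from Rockafellar. I will argue directly. Write $D=\operatorname{relint}\bigl(\bigcap_{j\in\N\cup\{\infty\}}\operatorname{dom}(V_j)\bigr)$.

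\textbf{Convexity of the limit.} Fix $x,y$ and $\lambda\in(0,1)$. For each $j$, convexity gives
\[
V_j((1-\lambda)x+\lambda y)\le (1-\lambda)V_j(x)+\lambda V_j(y)
\]
in $(-\infty,\infty]$. Passing to the pointwise limit preserves the inequality, since limits of extended reals respect $\le$ and the right side converges to $(1-\lambda)V_\infty(x)+\lambda V_\infty(y)$; no $\infty-\infty$ ambiguity arises because all values exceed $-\infty$. Hence $V_\infty$ is convex, and it is proper by hypothesis.

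\textbf{Uniform convergence on compacts of $D$.} Let $C\subset D$ be compact, and let $\mathrm{aff}$ denote the affine hull of the intersection of the domains. I will reduce to working inside $\mathrm{aff}$ with relative neighbourhoods.

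First, choose a polytope $P=\operatorname{conv}\{v_1,\dots,v_N\}\subset D$ with $C\subset\operatorname{relint}P$. This is possible by compactness, covering $C$ by finitely many small relative simplices.

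\emph{Uniform upper bound.} Every point of $P$ is a convex combination of the $v_i$, and $V_j(v_i)\to V_\infty(v_i)<\infty$. Therefore $\sup_j\max_i V_j(v_i)=:M<\infty$ after discarding finitely many $j$, which is harmless. Convexity then gives $V_j\le M$ on $P$ for all large $j$.

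\emph{Uniform lower bound.} Fix a point $c\in\operatorname{relint}P$. For $z\in P'$, a slightly smaller polytope still containing $C$ in its relative interior, the reflected point $2c-z$ lies in $P$ provided $P'$ is chosen inside a small relative ball around the relevant region; cover $C$ by finitely many such balls if necessary. Then
\[
V_j(z)\ge 2V_j(c)-V_j(2c-z)\ge 2V_j(c)-M,
\]
and $V_j(c)$ converges, so it is bounded.

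\emph{Equi-Lipschitz and uniform convergence.} A convex function bounded by $M'$ in absolute value on a relative $\delta$-neighbourhood of $C$ is $(2M'/\delta)$-Lipschitz on $C$, by the standard slope argument along lines within $\mathrm{aff}$. The $V_j$ are therefore equi-Lipschitz on $C$. Pointwise convergence together with equicontinuity on a compact set yields uniform convergence, by an $\varepsilon/3$ argument over a finite net.

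The main obstacle is the uniform boundedness step, particularly the lower bound and the bookkeeping of relative neighbourhoods inside the affine hull. The first thing to try is the polytope-vertex bound above, combined with the reflection trick.
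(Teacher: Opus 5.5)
Your proof is correct and is essentially the standard argument behind the result the paper only cites (Rockafellar's Theorem 10.8, which rests on the equi-Lipschitz Theorem 10.6): an upper bound from finitely many points via convexity, a lower bound by reflection through interior points, equi-Lipschitz continuity by the slope argument, and then uniform convergence from pointwise convergence. The one step to tighten is the lower bound: a single center $c$ with a "slightly smaller polytope" does not work as written, so either center the reflection at each point of $C$ and take a finite subcover (as you suggest), or use the asymmetric version $c=(1-\mu)z+\mu w$ with $\mu$ close to $1$. Either way, the bound must hold on a full relative $\delta$-neighbourhood of $C$, which is what your Lipschitz step needs.
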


\section{Proof of Theorem~\ref{t:continuity}}
\label{sec:prove_cont}
This section is dedicated to the proof of Theorem~\ref{t:continuity}. To this end, we need the following proposition. The monotonicity when $0<p<\infty$ was established by Milman and Pajor \cite[Lemma 2.1]{MP89} (see also \cite[Lemma 2.2.4]{BGVV14}). Recall that the essential support of a measurable function $\psi:\R_+\rightarrow \R_+$ is the closed set defined by 
$$\operatorname{ess}\supp(\psi)=\R_+ \setminus \Big\{t\in \R_+ \; : \; \exists\, r>0, \ 
\int_{t-r}^{t+r} \psi(s)\,\dlat s=0 \, \Big\}.$$
\begin{prop}
\label{p:increasing_mono}
Let $\psi:\R_+\to\R_+$ be a bounded, measurable function such that $0<\|\psi\|_\infty<\infty,$ and set
\[
\mathcal D(\psi)
=
\left\{
p>0:
\int_0^\infty \psi(r)r^{p-1}\,\dlat r<\infty
\right\}.
\]
Suppose $\mathcal D(\psi)\neq\varnothing$. For $p\in\mathcal D(\psi)$, define
\[
I_p(\psi)
=
\left(
\frac{p}{\|\psi\|_\infty}
\int_0^\infty \psi(r)r^{p-1}\,\dlat r
\right)^\frac{1}{p}.
\]
Then $p\mapsto I_p(\psi)$ is increasing and continuous on
$\mathcal D(\psi)$. This function is constant if and only if $\psi=\|\psi\|_\infty\chi_{[0,a]}$ almost everywhere for some $a>0$.

Next, if $\operatorname{ess}\supp(\psi)=[0,R]$ for some $R>0$, then
\begin{equation}
\lim_{p\to\infty}I_p(\psi)=R.
\label{eq:mellin_limit}
\end{equation}
If $\operatorname{ess}\supp(\psi)$ is unbounded with $\mathcal D(\psi)=(0,\infty),$ then
\begin{equation}
\lim_{p\to\infty}I_p(\psi)=\infty.
\label{eq:to_infty_and_beyond}
\end{equation}

We now consider negative $p$. Suppose that $\psi$ additionally satisfies the following regularity assumptions: $\psi$ decays to $0$ at infinity, it is Lipschitz near the origin, and $\psi(0)=\|\psi\|_\infty.$ For $p\in(-1,0)$, define
\[
I_p(\psi)
=
\left(
\frac{p}{\psi(0)}
\int_0^\infty r^{p-1}\big(\psi(r)-\psi(0)\big)\,\dlat r
\right)^\frac{1}{p}.
\]
Then, the function $p\mapsto I_p(\psi)$ is continuous and increasing on $(-1,0)$.
It is constant if and only if $\psi(r)=\psi(0)\chi_{[0,b]}$ almost everywhere for some $b>0$.

Finally, we consider $p=0$. Assuming the same regularity assumptions on $\psi$, define
\[
I_0(\psi)
=
\exp\left(
\int_0^1
\left(\frac{\psi(r)}{\psi(0)}-1\right)\frac{\dlat r}{r}
+
\int_1^\infty
\frac{\psi(r)}{\psi(0)}\frac{\dlat r}{r}
\right).
\]
Then $\lim_{p\to0}I_p(\psi)=I_0(\psi).$ Thus, the function
$p\mapsto I_p(\psi)$ is continuous and increasing on $(-1,0]\cup\mathcal D(\psi)$, and is constant if and only if $\psi(r)=\psi(0)\chi_{[0,a]}$ almost everywhere for some $a>0$. In particular, if $\psi$ has bounded essential support, then $\mathcal D(\psi)=(0,\infty)$, and $p\mapsto I_p(\psi)$ is continuous and increasing on $(-1,\infty)$.
\end{prop}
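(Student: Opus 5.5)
The approach is to rewrite every $I_p(\psi)$ as a $p$-th moment of a single probability measure and then apply Jensen/H\"older. Normalize $\psi(0)=\|\psi\|_\infty=1$ in the regular case.

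\textbf{Representation by one measure.} Under the regularity assumptions (decay at infinity, Lipschitz near $0$, $\psi(0)=\|\psi\|_\infty$), I would like to write $\psi(r)=\mu((r,\infty))$ for a probability measure $\mu$ on $(0,\infty)$, i.e.\ $\mu=\dlat(-\psi)$. For monotone $\psi$ this is immediate. For non-monotone $\psi$ it is not, so instead I work with the layer-cake, i.e.\ Fubini, representation
\[
\psi(r)=\int_0^1\chi_{\{\psi>t\}}(r)\,\dlat t .
\]
With either representation, integrating by parts gives, for $p>0$,
\[
p\int_0^\infty\psi(r)r^{p-1}\dlat r=\int r^p\,\dlat\mu .
\]
For $p\in(-1,0)$ the same identity holds with $\psi-1=-\mu((0,r])$:
\[
p\int_0^\infty r^{p-1}(\psi-1)\,\dlat r=\int r^p\,\dlat\mu ,
\]
and the integral converges near $0$ because $1-\psi(r)=O(r)$ there (Lipschitz) and $p>-1$, and near $\infty$ because $p<0$. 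Hence in both regimes $I_p(\psi)=\bigl(\int r^p\,\dlat\mu\bigr)^{1/p}$, which is the $L^p(\mu)$ quasi-norm of the identity function. Likewise, expanding $\log r=\int_1^r \dlat s/s$ and using Fubini gives $I_0(\psi)=\exp\bigl(\int\log r\,\dlat\mu\bigr)$.

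For general bounded measurable $\psi$ that is not monotone, the clean replacement is the decreasing rearrangement $\psi^*$ on $\R_+$. Then $\int\psi(r)r^{p-1}\,\dlat r$ is not rearrangement-invariant, so the direct representation fails. For the case $p>0$, the Milman--Pajor result \cite[Lemma 2.1]{MP89} is stated earlier and may simply be invoked. For negative $p$, I would write $\psi(r)-1=-\int_0^1\chi_{\{\psi\le t\}}(r)\,\dlat t$ and reduce to a mixture over level sets. Here, for each level $t$, the set $\{\psi\le t\}$ avoids a neighbourhood of $0$, since $\psi$ is continuous there with $\psi(0)=1$. A cleaner route is the Milman--Pajor comparison trick: compare $\psi$ with $\chi_{[0,a]}$, where $a=I_p(\psi)$, and observe that
\[
\int (\psi-\chi_{[0,a]})\,r^{p-1}\bigl(r^{q-p}-a^{q-p}\bigr)\,\dlat r\ \ge 0 ,
\]
because the two factors have the same sign pointwise ($\psi\le 1$ on $[0,a]$, $\psi\ge0$ outside). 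This trick works verbatim for negative exponents when $\psi-1$ is used in place of $\psi$, and I would take this as the main argument.

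\textbf{Key steps.} (1) Monotonicity on $\mathcal D(\psi)$ and on $(-1,0)$ via the comparison above. Choose $a$ with $\int(\psi-\chi_{[0,a]})r^{p-1}\,\dlat r=0$; this is a finite identity after subtracting $1$ when $p<0$. Multiplying the integrand by $r^{q-p}-a^{q-p}$ preserves the sign, which yields $I_q\ge a=I_p$. Equality forces $\psi=\chi_{[0,a]}$ a.e. (2) Continuity in $p$, by dominated convergence, using the boundedness of $\psi$, the decay at infinity, and the Lipschitz bound $|\psi(r)-1|\le Lr$ near $0$ for the negative range. (3) The limit $p\to0$: write $\int r^p\,\dlat\mu=1+p\int\log r\,\dlat\mu+o(p)$, expressed directly through the two integrals defining $I_0$, and take logarithms. (4) The limits as $p\to\infty$ are standard: $I_p\le R$ trivially, and $I_p\ge (R-\epsilon)\bigl(p\int_{R-\epsilon}^{R}\psi\,r^{p-1}\,\dlat r\bigr)^{1/p}\to R-\epsilon$; the unbounded case follows the same way. (5) The monotonicity across $p=0$ follows by combining the continuity of (2)--(3) with the monotonicity of (1).

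\textbf{Main obstacle.} I expect the hardest step to be making the negative-$p$ comparison rigorous. Specifically, one must check that the choice of $a$ exists, with $p\int r^{p-1}(\psi-1)\,\dlat r=-a^{p}$, and that all manipulated integrals converge at both $0$ and $\infty$ when the two exponents straddle $0$. I would handle the straddling case by going through $p=0$ via the limit of step (3), rather than comparing directly.
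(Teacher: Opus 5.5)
Your proposal is correct. For $p>0$ and for the limits $p\to\infty$ it follows the paper: the Milman--Pajor comparison with $\chi_{[0,a]}$, dominated convergence, and a lower bound coming from $\int_{R-\epsilon}^R\psi\,\dlat r>0$. Your step (3), once phrased without $\mu$, is the paper's argument too.

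The real difference is the range $p\in(-1,0)$. The paper never compares there directly. It substitutes $r\mapsto 1/r$, sets $\varphi(r)=1-\psi(1/r)/\psi(0)$, proves $I_p(\psi)=I_{|p|}(\varphi)^{-1}$, and then inherits monotonicity, continuity and the equality case from the positive regime. (For continuity, the Lipschitz bound gives $\varphi(r)=O(r^{-1})$, so $(0,1)\subseteq\mathcal D(\varphi)$.)

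You instead run the comparison on $\Delta=\psi-\chi_{[0,a]}$ with $a=I_p(\psi)$, and this is legitimate:
\begin{itemize}
\item $\int_0^\infty r^{p-1}\Delta\,\dlat r$ converges, since $\Delta=O(r)$ near $0$ and $r^{p-1}$ is integrable at $\infty$ for $p<0$; it equals $0$ by the choice of $a$.
\item $(r^{q-p}-a^{q-p})\Delta\ge 0$ pointwise.
\item $\int_0^\infty r^{q-1}\Delta\,\dlat r=(I_q^q-a^q)/q$, and $x\mapsto x^q/q$ is increasing for every $q\neq 0$.
\end{itemize}
After the change of variables the two computations are the same inequality. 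The paper's route buys economy: one proof gives three conclusions. Yours buys uniformity: the identical computation holds when $p<0<q\in\mathcal D(\psi)$, because all the integrals stay finite. So the detour through $p=0$ that you plan for the straddling case, which the paper also takes, is not actually needed. Only $p=0$ itself requires the limit.

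A few slips to repair:
\begin{itemize}
\item For $p<0$ the normalization is $p\int_0^\infty r^{p-1}(\psi-1)\,\dlat r=+a^p$, not $-a^p$.
\item The measure $\mu=\dlat(-\psi)$ does not exist for non-monotone $\psi$. Step (3) should therefore be written through $J(p)=\int_0^1(\psi-1)r^{p-1}\,\dlat r+\int_1^\infty\psi\, r^{p-1}\,\dlat r$ and the identity $I_p(\psi)^p=1+pJ(p)$, which is valid on both sides of $0$.
\item Continuity for $p>0$ needs domination at infinity by $\psi(r)r^{b-1}$ with $b\in\mathcal D(\psi)$; decay of $\psi$ alone gives no integrability.
\item In step (4), with $\|\psi\|_\infty=1$ and $p\ge 1$, the lower bound should read $I_p(\psi)\ge(R-\epsilon)^{1-1/p}\bigl(p\int_{R-\epsilon}^R\psi\,\dlat r\bigr)^{1/p}$.
\end{itemize}
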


\begin{proof}
     Set $M=\|\psi\|_\infty$. We first study $I_p(\psi)$ for $p>0$. To this end, fix $0<p<q$ with $p,q\in \mathcal D(\psi)$. Let $a=I_p(\psi)$ and $\varphi(r)=pr^{p-1}\left(\frac{\psi(r)}{M}-\chi_{[0,a]}(r)\right)$. Notice that $\varphi\le 0$ on $[0,a]$, $\varphi\ge 0$ on $[a,\infty)$ and $\int_0^{\infty}\varphi(r)\;\dlat r=0$. Thus
\[
I_q(\psi)^q-I_q(\chi_{[0,a]})^q=\frac{q}{p}\int_0^{\infty}r^{q-p}\varphi(r)\; \dlat r=\frac{q}{p}\int_0^{\infty}(r^{q-p}-a^{q-p})\varphi(r)\; \dlat r\ge 0,
\]
since the integrand is non negative on $\R_{+}$. We conclude that
\[
I_q(\psi)^q\ge I_q(\chi_{[0,a]})^q=a^q=I_p(\psi)^q.
\]
There is equality if and only if $\psi=M\cdot\chi_{[0,a]}$ almost everywhere.  We now show the continuity. Set
\[
F(p)=\frac{1}{M}\int_0^\infty \psi(r)r^{p-1}\,\dlat r.
\]
Since $\psi$ is bounded, integrability near the origin holds for every $p>0$. Moreover, if $q\in\mathcal D(\psi)$ and $0<p<q$, then
\[
\int_1^\infty \psi(r)r^{p-1}\,\dlat r
\leq
\int_1^\infty \psi(r)r^{q-1}\,\dlat r<\infty.
\]
Thus, $\mathcal D(\psi)$ is an interval, with one endpoint at $0$ and another at some $p_\ast\in(0,\infty]$.

Fix $p_0\in\mathcal D(\psi)$, and let $p_j\in\mathcal D(\psi)$ be a sequence satisfying $p_j\to p_0$. Choose $ a\in(0,p_0)$. If $p_0<p_\ast$, choose $b\in\mathcal D(\psi)$ with $b>p_0$; if $p_0=p_\ast\in\mathcal D(\psi)$, set $b=p_0$. For all sufficiently large $j$, we have
\[
a\leq p_j\leq b.
\]
Consequently,
\[
\psi(r)r^{p_j-1}
\leq
M r^{a-1},
\qquad 0<r<1,
\]
whereas
\[
\psi(r)r^{p_j-1}
\leq
\psi(r)r^{b-1},
\qquad r\geq1.
\]
Notice that the function
\[
r\mapsto
M r^{a-1}\chi_{(0,1)}(r)
+
\psi(r)r^{b-1}\chi_{[1,\infty)}(r)
\]
is integrable. Therefore, by the dominated convergence theorem, we have $F(p_j)\rightarrow F(p_0).$ Hence $F$ is continuous on its domain $\mathcal D(\psi)$, and so too is $I_p(\psi)=
\left(
pF(p)
\right)^\frac{1}{p}.
$

For the claims regarding sending $p\to \infty$, we start the case when the essential support is $[0,R]$. We have, for $p>0$,
\begin{equation}
\label{eq:mellin_limit_proto}
I_p(\psi)=R\left(\frac{1}{M}\int_0^R\psi(r)\frac{p r^{p-1}}{R^p}\,\dlat r\right)^\frac{1}{p}.
\end{equation}
Since $\psi\leq \|\psi\|_\infty$ almost everywhere, it is easy to see from \eqref{eq:mellin_limit_proto} that
\[
I_p(\psi) \leq R.
\]
On the other hand, by the definition of essential support, for every $\delta \in (0,R),$ the set
\[
A_\delta = \{r\in (R-\delta,R): \psi(r)>0\}
\]
has positive measure. Let $M_\delta=\operatorname{ess}\sup_{r\in A_\delta}\psi(r) \in(0,M],$ and let $m_\delta \in (0,M_\delta)$. Then, the set
\[
E_\delta = \left\{r\in (R-\delta,R):\psi(r)\geq m_\delta\right\}
\]
has positive measure. Therefore, for $p\geq 1$,
\begin{equation}
\label{eq:mellin_limit_proto_2}
\begin{split}
I_p(\psi)\geq\left(\frac{pm_\delta}{M}\int_{E_\delta}r^{p-1}\,\dlat r\right)^\frac{1}{p} &\geq\left(\frac{pm_\delta\vol_1(E_\delta)}{M}(R-\delta)^{p-1}\right)^\frac{1}{p} \\
&=R\left(1-\frac{\delta}{R}\right)\left(\frac{pm_\delta\vol_1(E_\delta)}{M(R-\delta)}\right)^\frac{1}{p}.
\end{split}
\end{equation}
For every fixed $\delta\in(0,R)$, the final factor on the right-hand
side converges to $1$ as $p\to\infty$. We deduce the inequality
\[
\lim_{p\to\infty}I_p(\psi)\geq R\left(1-\frac{\delta}{R}\right).
\]
By sending $\delta\to 0$, we deduce the claim.

Next, suppose that $\operatorname{ess}\supp(\psi)$ is unbounded and that $\mathcal D(\psi)=(0,\infty)$. Fix $A>0$. We may choose
\[
t\in\operatorname{ess}\supp(\psi)
\qquad\text{with}\qquad t>A.
\]
Choose $\delta>0$ sufficiently small that $t-\delta>A$. By the definition of essential support,
\[
\int_{t-\delta}^{t+\delta}\psi(r)\,\dlat r>0.
\]
Consequently, there exist $m>0$ and a measurable set $E\subset(t-\delta,t+\delta)$ of positive measure such that $\psi\geq m$ on $E$. We obtain, for $p\geq1$,

\begin{align*}
I_p(\psi)
&\geq
\left(
\frac{p}{M}\int_E\psi(r)r^{p-1}\,\dlat r
\right)^\frac{1}{p} 
\\
&\geq
\left(
\frac{pm\vol_1(E)}{M}(t-\delta)^{p-1}
\right)^\frac{1}{p} 
\\
&=
(t-\delta)
\left(
\frac{pm\vol_1(E)}{M(t-\delta)}
\right)^\frac{1}{p}.
\end{align*}
Hence, $\liminf_{p\to\infty}I_p(\psi)\geq t-\delta>A.$ Since $A>0$ was arbitrary, it follows that $\lim_{p\to\infty}I_p(\psi)=\infty.$ This establishes \eqref{eq:to_infty_and_beyond}.

We next consider negative $p$. Take $-1<p<q<0$ and define
\[
\varphi(r)
=
1-\frac{\psi(1/r)}{\psi(0)}.
\]
Since $\psi(0)=\|\psi\|_\infty$ and $\psi(r)\to0$ as $r\to\infty$,
we have $0\leq\varphi\leq1$ almost everywhere and $\|\varphi\|_\infty=1.$ A change of variables gives, for $p\in(-1,0)$,
\begin{equation}
\label{eq:regime_relate}
\begin{split}
I_p(\psi)&=\left(\frac{p}{\psi(0)}\int_0^\infty\big(\psi(r)-\psi(0)\big)r^{p-1}\,\dlat r\right)^\frac{1}{p} 
\\
&=
\left(
|p|\int_0^\infty
\varphi(r)r^{|p|-1}\,\dlat r
\right)^{-\frac{1}{|p|}}
\\
&=I_{|p|}(\varphi)^{-1}.
\end{split}
\end{equation} 
Picking $p<q<0$, we have $0<|q|<|p|$. By applying the result for positive $p$, we have $$I_p(\psi) =I_{|p|}(\varphi)^{-1} \leq I_{|q|}(\varphi)^{-1} = I_q(\psi),$$
 as claimed. 
 
 As for the equality conditions, applying the equality case of $p>0$ to $\varphi$ and writing it in terms of $\psi$ yields that $\psi(r)=\psi(0)\left(1-\chi_{(0,a)}(1/r)\right)$ almost everywhere for some $a>0$. But note that $1/r\in (0,a)$ if and only if $r\in (1/a,\infty)$. Setting $b=1/a$, we have $\psi(r)=\psi(0)\left(1-\chi_{(b,\infty)}(r)\right)$ almost everywhere. However, $1-\chi_{(b,\infty)}(r)=\chi_{[0,b]}(r)$, yielding the claimed equality characterization.

We now consider the continuity; it again follows from the identity in \eqref{eq:regime_relate}.
Indeed, the Lipschitz assumption at the origin gives
\[
\varphi(r)=O(r^{-1})
\qquad\text{as }r\to\infty,
\]
so the moments defining $I_q(\varphi)$ are finite for $0<q<1$, and then we apply the continuity from the first case.

We turn to continuity at $0$. Write $f(r)=\frac{\psi(r)}{\psi(0)}$ and, for $p$ sufficiently close to zero, set
\[
J(p)
=
\int_0^1(f(r)-1)r^{p-1}\,\dlat r
+
\int_1^\infty f(r)r^{p-1}\,\dlat r.
\]
For $p>0$, we have
\[
p\int_0^\infty f(r)r^{p-1}\,\dlat r=1+pJ(p),
\]
whereas, for $p<0$,
\[
p\int_0^\infty (f(r)-1)r^{p-1}\,\dlat r=
1+pJ(p).
\]
Consequently, on both sides of zero,
\[
I_p(\psi)=\big(1+pJ(p)\big)^\frac{1}{p}.
\]
Choose $q\in\mathcal D(\psi)$ and $0<\eta<\min\{1,q\}.$ For $|p|\leq\eta$, the local Lipschitz continuity of $f$ at the origin
yields
\[
|f(r)-1|r^{p-1}=O(r^{-\eta})
\]
uniformly for $|p|\leq\eta$. On the other hand, for $r\geq1$ and
$|p|\leq\eta<q$,
\[
f(r)r^{p-1}\leq f(r)r^{q-1},
\]
and the function on the right is integrable. Therefore, the dominated
convergence theorem yields
\[
J(p)\rightarrow J(0)
\qquad\text{as }p\to0.
\]
Since $J(p)\to J(0)$, we have $pJ(p)\to0$. Therefore, $\log(1+pJ(p))=pJ(p)+o(p),$ and hence
\[
\lim_{p\to0}\log I_p(\psi)
=
\lim_{p\to0}\frac{\log(1+pJ(p))}{p}
=
J(0),
\]
where we have used
\[
\lim_{x\to0}\frac{\log(1+x)}{x}=1.
\]
Consequently,
\[
\lim_{p\to0}I_p(\psi)
=
e^{J(0)}
=
I_0(\psi).
\]
Therefore, we have shown that, by the monotonicity on each side of zero and the continuity at zero,
the function $p\mapsto I_p(\psi)$ is continuous and increasing on $(-1,0]\cup\mathcal D(\psi).$
\end{proof}

We now specialize the above proposition to Ball's bodies of log-concave functions.
\begin{proof}[Proof of Theorem~\ref{t:continuity}]
Fix $\theta\in\s^{n-1}$ and set
\[
\psi_\theta(r)=g(r\theta),\qquad r\geq 0.
\]
Suppose first that $\psi_\theta(r)=0$ for every $r>0$. Then, directly from the definitions, $\|\theta\|_{K_p(g)}=\infty$ for every $p\in(-1,\infty)$, and hence the monotonicity and continuity assertions are immediate in this direction.

Suppose now that $\psi_\theta$ is not identically zero on $(0,\infty)$. Then $\psi_\theta(0)=g(o)=\|\psi_\theta\|_\infty.$ Moreover, $\psi_\theta$ is Lipschitz near the origin. Indeed, if $r_0>0$ is such that $\psi_\theta(r_0)>0$, then the function $r\mapsto-\log\left(\frac{\psi_\theta(r)}{\psi_\theta(0)}\right)$ is non-negative and convex on $[0,r_0]$, and vanishes at the origin; therefore it is bounded above by a linear function $Cr$ on this interval for some $C>0$. Consequently,
\[
0\leq 1-\frac{\psi_\theta(r)}{\psi_\theta(0)}
\leq 1-e^{-Cr}\leq Cr,
\]
which proves that $\psi_\theta$ is Lipschitz near the origin.
Also, Proposition~\ref{p:integrable} implies that $\psi_\theta$ is bounded by an exponential tail. In particular, it decays to zero and has finite moments of all orders, i.e., $\mathcal D(\psi_\theta)=(0,\infty).$

For $p\in(-1,0)\cup(0,\infty)$, we have by definition that $I_p(\psi_\theta)=\|\theta\|_{K_p(g)}^{-1}.$ For $p=0$, integration by parts with respect to the Lebesgue--Stieltjes measure $\dlat\bigl(-g(r\theta)\bigr)$ gives $I_0(\psi_\theta)=\|\theta\|_{K_0(g)}^{-1}.$ Proposition~\ref{p:increasing_mono} therefore shows that
\[
p\mapsto \|\theta\|_{K_p(g)}^{-1}
\]
is continuous and increasing on $(-1,\infty)$ for every
$\theta\in\s^{n-1}$, or, equivalently,
\[
K_p(g)\subseteq K_q(g),
\qquad -1<p<q<\infty,
\]
and $p\mapsto K_p(g)$ is continuous on $(-1,\infty)$.

We next identify the limit as $p\to\infty$. We claim that
\begin{equation}
\lim_{p\to\infty}\|\theta\|_{K_p(g)}=R_g(\theta)^{-1}=\|\theta\|_{K_\infty(g)}, \qquad \theta\in\s^{n-1}.
\label{eq:infty_gauge}
\end{equation}
Indeed, if $R_g(\theta)=0$, then it follows from the definition of $K_p(g)$ that $\|\theta\|_{K_p(g)}=\infty$ for every finite $p$. If $0<R_g(\theta)<\infty$, then the essential support of $\psi_\theta$ is $[0,R_g(\theta)]$, and \eqref{eq:mellin_limit} gives
\[
\lim_{p\to\infty}\|\theta\|_{K_p(g)}
=
R_g(\theta)^{-1}.
\]
Finally, if $R_g(\theta)=\infty$, then the essential support of
$\psi_\theta$ is unbounded and, by
\eqref{eq:to_infty_and_beyond},
\[
\lim_{p\to\infty}\|\theta\|_{K_p(g)}=0.
\]
Thus, we have shown \eqref{eq:infty_gauge}. In particular,
$K_q(g)\subseteq K_\infty(g)$ for every finite $q>-1$, proving \eqref{eq:weak_chain}. Moreover, \eqref{eq:infty_gauge} shows that $p\mapsto K_p(g)$ is continuous at $p=\infty$, and hence continuous on $(-1,\infty]$.

We now obtain the properties of $K_\infty(g)$. To this end, let
\[
P=\{x\in\R^n:g(x)>0\}.
\]
The formula \eqref{eq:infty_gauge} shows that $K_\infty(g)=\{x\in\R^n:[o,x)\subseteq P\}.$ Since $P$ is convex, this representation shows that $K_\infty(g)$ is convex. Moreover,
\begin{equation}
\label{eq:infty_p_inclusions}
P\subseteq K_\infty(g)\subseteq\overline P=\supp(g).
\end{equation}
It follows that $\overline{K_\infty(g)}=\supp(g).$ Since $P$ is convex and has non-empty interior,
\[
\operatorname{int}(P)=\operatorname{int}\left(\overline P\right)=\operatorname{int}\left(\supp(g)\right).
\]
The inclusions \eqref{eq:infty_p_inclusions} therefore yield $\operatorname{int}\left(K_\infty(g)\right)=\operatorname{int}\left(\supp(g)\right).$

If $g\in\LC^0$, then $o\in\operatorname{int}\left(\supp(g)\right)$. Hence, for every
$x\in\supp(g)$ and every $0\leq t<1$,
\[
tx\in\operatorname{int}\left(\supp(g)\right)\subseteq P.
\]
Thus $x\in K_\infty(g)$, and consequently $K_\infty(g)=\supp(g).$

It remains to discuss equality. Suppose that $K_p(g)=K_q(g)$ for some
$-1<p<q<\infty$. Fix $\theta\in\s^{n-1}$. If
\[
g(r\theta)=0,\qquad r>0,
\]
set $\rho(\theta)=0$. Otherwise, since $s\mapsto I_s(\psi_\theta)$ is increasing, the equality
$I_p(\psi_\theta)=I_q(\psi_\theta)$ and the equality characterization
in Proposition~\ref{p:increasing_mono} yield some $\rho(\theta)>0$ such that
\[
g(r\theta)=g(o)\chi_{[0,\rho(\theta)]}(r)
\]
almost everywhere on $\R_+$. The upper-semicontinuity and log-concavity of $r\mapsto g(r\theta)$ upgrade this almost-everywhere identity to a pointwise identity. Therefore,
\[
g(x)
=
g(o)\chi_{\left[0,\rho\left(\frac{x}{|x|}\right)\right]}(|x|),
\qquad x\neq o.
\]

If $q=\infty$, choose a finite $s>p$. The inclusions
\[
K_p(g)\subseteq K_s(g)\subseteq K_\infty(g)
\]
show that $K_p(g)=K_\infty(g)$ implies $K_p(g)=K_s(g),$ and the preceding argument applies.

Finally, the set $\{x\in\R^n:g(x)=g(o)\}$ is convex and bounded, since otherwise the integrability of $g$ would be contradicted. Hence $\rho$ is bounded. The converse follows immediately from the definitions, since for a function of the stated form all the sets $K_p(g)$ coincide.
\end{proof}

We conclude this section by proving that $K_p(g)$ are star bodies when $g\in \LC^0$.

\begin{lem}
\label{l:star_body}
    Let $g\in \LC^0$. Then, for $p>-1$,
    \begin{enumerate}
        \item $K_p (g)$ is compact, i.e., $\|\cdot\|_{K_p(g)}$ satisfies, for $x\in\R^n,$ the inequality $\|x\|_{K_p(g)}\geq 0$, with equality if and only if $x=o$;
        \item $K_p(g)$ has non-empty interior, i.e., $\|\cdot\|_{K_p(g)}$ is finite on $\R^n$;
        \item Finally, $K_p(g)$ is a star-body, i.e., $\|\cdot\|_{K_p(g)}$ is continuous.
    \end{enumerate}
\end{lem}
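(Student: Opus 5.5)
Everything reduces to the radial quantities $I_p(\psi_\theta)=\|\theta\|_{K_p(g)}^{-1}$, $\theta\in\s^{n-1}$, with $\psi_\theta(r)=g(r\theta)$, as in the proof of Theorem~\ref{t:continuity}. Since $o\in\operatorname{int}(\supp(g))$, there is $\delta>0$ with $\delta B_2^n\subset\{g>0\}$. Then $R_g(\theta)\ge\delta$ for all $\theta$. By Proposition~\ref{p:integrable}, $g(x)\le Ae^{-B|x|}$, and the convex set $\{g\ge g(o)/2\}$ is bounded.

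\textbf{Step 1: uniform two-sided bounds on $I_p(\psi_\theta)$.} I first fix a compact parameter range. By the monotonicity of Proposition~\ref{p:increasing_mono}, it is enough to bound $I_p$ from below at one fixed $p_0\in(-1,p]$ and from above at one fixed $q_0\ge p$.

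For the upper bound with $p>0$, I use the exponential tail $g\le Ae^{-B|x|}$ directly; this gives $I_p(\psi_\theta)\le C$ uniformly in $\theta$. For $p\le 0$, the upper bound follows from $I_p\le I_1$.

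For the lower bound, I use log-concavity together with $\delta B_2^n\subset\{g>0\}$. Since $g$ is log-concave and positive on the compact set $\frac{\delta}{2}B_2^n$, it is bounded below there by some $c>0$. This uses that log-concave functions are continuous on the interior of their support, so the infimum over a compact subset of the interior is attained and positive.

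With this, for $p>0$ we get $\int_0^\infty\psi_\theta r^{p-1}\ge c\int_0^{\delta/2}r^{p-1}\dlat r$, which is a positive constant independent of $\theta$. For $p\in(-1,0)$, I use the reformulation \eqref{eq:regime_relate}, $I_p(\psi)=I_{|p|}(\varphi)^{-1}$, so a lower bound on $I_p$ is an upper bound on $\int_0^\infty\bigl(1-\psi(1/r)/\psi(0)\bigr)r^{|p|-1}\dlat r$. The part over $r\ge 1/\rho$ is controlled by the uniform Lipschitz estimate $1-\psi_\theta(s)/\psi_\theta(0)\le Cs$ for $s\le\rho$. This estimate comes from the argument in the proof of Theorem~\ref{t:continuity}, now made uniform in $\theta$ by taking $r_0=\delta/2$ and $C=\log(g(o)/c)/r_0$. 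The part over $r<1/\rho$ is bounded by $\rho^{-|p|}/|p|$.

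For $p=0$, I use continuity or monotonicity in $p$, so Step~1 gives $0<\|x\|_{K_p(g)}<\infty$ for $x\neq o$. Together with homogeneity, this proves (1) and (2).

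\textbf{Step 2: continuity of $\theta\mapsto I_p(\psi_\theta)$.} This is the step I expect to be the main obstacle. Continuity of the gauge on $\R^n$ reduces to continuity on the sphere plus the two-sided bounds.

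I argue by dominated convergence. Take $\theta_j\to\theta$. Because $g$ is continuous on the open convex set $\operatorname{int}(\supp(g))$, we have $g(r\theta_j)\to g(r\theta)$ for every $r$ except possibly $r=R_g(\theta)$. The exceptional set is a single point, since the ray meets the boundary of the convex set $\supp(g)$, which has $o$ in its interior, exactly once. Also, $R_g$ is continuous in $\theta$, because the radial function of a convex body with $o$ in its interior is continuous.

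For dominating functions I use $Ae^{-Br}r^{p-1}$ at infinity. Near $0$ I use $r^{p-1}\min(1,Cr)$, which is integrable for $p>-1$, via the uniform Lipschitz bound from Step~1. The case $p=0$ works the same way using the form of $I_0$ in Proposition~\ref{p:increasing_mono}.

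This yields continuity of $\|\cdot\|_{K_p(g)}$ on $\s^{n-1}$, hence on $\R^n\setminus\{o\}$ by homogeneity. Continuity at $o$ follows from the uniform upper bound $\|x\|\le C'|x|$ obtained in Step~1, which proves (3).
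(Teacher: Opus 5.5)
Your proof is correct and follows essentially the same route as the paper's. Positivity and finiteness of the gauge come from the exponential tail of $g$, the positivity of $g$ near $o$, and a Lipschitz bound for $r\mapsto g(r\theta)$ at $r=0$, and continuity comes from dominated convergence along rays; your uniform-in-$\theta$ constants (from $g\ge c$ on $\frac{\delta}{2}B_2^n$), your use of monotonicity in $p$, and your identification of the exceptional point $r=R_g(\theta)$ make precise what the paper leaves implicit.

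One small correction: the dominating functions need to be paired with the sign of $p$. For $p\in(-1,0)$ the integrand $r^{p-1}\bigl(1-g(r\theta_j)/g(o)\bigr)$ behaves like $r^{p-1}$ as $r\to\infty$ rather than decaying exponentially, so use $r^{p-1}\min\{1,Cr\}$ on all of $\R_+$, with $C$ enlarged so that $Cr_0\geq 1$. For $p>0$, use $r^{p-1}$ near $0$ and $Ae^{-Br}r^{p-1}$ at infinity.
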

\begin{proof}
 By Theorem~\ref{t:continuity}, we may assume that $p\neq0$. Henceforth, fix $x\in\R^n\setminus\{o\}$. Since
$o\in\operatorname{int}\left(\supp(g)\right)$, we have $g(rx)>0$ for all
sufficiently small $r>0$. Consequently, for $p>0$,
\[
\int_0^\infty g(rx)r^{p-1}\,\dlat r>0.
\]
Moreover, Proposition~\ref{p:integrable} shows that this integral is
finite. Therefore,
\[
0<\|x\|_{K_p(g)}<\infty.
\]

Now let $p\in(-1,0)$. Since $g(o)=\max g$, the function $r\mapsto \bigl(g(o)-g(rx)\bigr)r^{p-1}$ is nonnegative. Furthermore, $g(rx)\to0$ as $r\to\infty$, and hence
$g(o)-g(rx)>0$ for all sufficiently large $r$. Thus,
\[
\int_0^\infty \bigl(g(o)-g(rx)\bigr)r^{p-1}\,\dlat r>0.
\]
On the other hand, we can split the integral and get
\[
\int_0^\infty\!\!\! r^{p-1}(g(rx)-g(o))\; \dlat r\!=\!\int_0^1 \!\!\!r^{p-1}(g(rx)-g(o))\; \dlat r +\! \int_1^\infty \!\!r^{p-1}(g(rx)-g(o))\; \dlat r.
\]
The tail integral is finite, since
\[
\int_1^{\infty}r^{p-1}(g(rx)-g(o))\dlat r=\frac{g(o)}{p} + \int_1^\infty r^{p-1}g(rx)\dlat r.
\]
As for the integral near zero, since
$o\in\operatorname{int}\left(\supp(g)\right)$ and $g$ is locally Lipschitz there,
there exist $\delta,C>0$ such that
\[
|g(rx)-g(o)|\leq Cr,\qquad 0\leq r\leq\delta.
\]
Since $p>-1$, this gives integrability on $(0,\delta)$. Integrability
on $(\delta,1)$ is immediate because the integrand is bounded there.

We have thus shown, for all $p>-1, p\neq 0$, and $x\in \R^n\setminus\{o\}$, $0<\|x\|_{K_p(g)}<\infty$. From $\|x\|_{K_p(g)}=|x|\left\|\frac{x}{|x|}\right\|_{K_p(g)}$, we obtain $\|o\|_{K_p(g)}=0$. We conclude the proof of items (1) and (2). The continuity of $\|\cdot\|_{K_p(g)}$ follows from dominated convergence. Indeed, if $x_j\to x\neq o$, then $g(rx_j)\to g(rx)$ for almost every $r>0$. For $p>0$, one uses the exponential bound on $g$, while for $p\in(-1,0)$ one additionally uses the local Lipschitz continuity of $g$ near the origin, together with
$o\in\operatorname{int}\left(\supp(g)\right)$. Finally, continuity at the origin follows from positive
$1$-homogeneity and the boundedness of
$\|\cdot\|_{K_p(g)}$ on $\s^{n-1}$.
\end{proof}

\section{Proof of Theorem~\ref{t:main}}
\label{sec:main}
We first show that Theorem~\ref{t:main} is merely a special case of Theorem~\ref{t:ball_expanded}. We start by recalling the definitions of radial $p$th mean bodies, as introduced by R. Gardner and G. Zhang \cite{GZ98}.

\begin{defn}
    Let $K\subset\R^n$ be a convex body. Then, for $p>-1$, its radial $p$th mean body $R_p K$ is the star body given by the Minkowski functional, for $x\in\R^{n}$,
    \begin{equation}
\label{eq:radial_mean_og}
    \|x\|_{R_p K}=
    \begin{cases}
        \left(\int_{K}\|x\|_{K-y}^{-p}\frac{\dlat y}{\vol_n(K)}\right)^{-\frac{1}{p}}, & p>-1,p\neq 0,
        \\
        \exp\left(\int_{K}\log\|x\|_{K-y}\frac{\dlat y}{\vol_n(K)}\right), &p=0.
    \end{cases}
\end{equation}
\end{defn}

Next, we recall the covariogram function of a convex body $K$ is given by
\begin{equation}\label{eq:covariogram}
g_K(x)=\vol_n(K\cap (K+x)), \quad x\in\R^n;
\end{equation} 
see the recent survey by Bianchi \cite{GB23} for a rich overview of this function. We merely mention that
\begin{equation}
\label{eq:g_K_facts}
    g_K \text{ is supported on } DK,\quad g_K\in \LC^0 \quad \text{and} \quad \max g_K = g_K(o)=\vol_n(K).
\end{equation}
Through an application of Fubini's theorem, one deduces an alternative representation for the Minkowski functional of $R_p K$. The identity is well-known (see \cite[Lemma 3.1]{GZ98} and also \cite{LP25} for $p\in (-1,0)$), but we will provide a proof for completeness.
\begin{prop}
\label{p:equivalence}
Let $p>-1$ and let $K\subset \R^n$ be a convex body. Then, the Minkowski functional of $R_p K$ satisfies $\|o\|_{R_p K}=0$ and for $x\in\R^{n}\setminus\{o\}$:
    \begin{equation}
\|x\|_{R_p K}=\begin{cases}
    \|x\|_{DK}, & p=\infty,
    \\
    \left(p\int_0^{\infty}\left(\frac{g_K(rx)}{\vol_n(K)}\right)r^{p-1}\dlat r\right)^{-\frac{1}{p}}, & p>0,
    \\
    \exp\left(-\int_0^{\infty}\frac{\partial}{\partial r}\left(\frac{-g_K(rx)}{\vol_n(K)}\right)\log(r)\dlat r\right), & p=0,
    \\
    \left(p\int_{0}^{\infty} \left(\frac{g_K(rx)}{\vol_n(K)}-1\right) r^{p-1} \dlat r\right)^{-\frac{1}{p}}, & p\in (-1,0).
\end{cases}
\label{radial_ell}
\end{equation}
\end{prop}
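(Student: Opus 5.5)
The plan is to rewrite the definition \eqref{eq:radial_mean_og} as an integral over rays using the distribution function of $y\mapsto \|x\|_{K-y}^{-1}$, and then to recognise that distribution function as the normalised covariogram. Fix $x\neq o$. For $y\in \operatorname{int}K$ and $r\ge 0$, the definition of the Minkowski functional gives $\|x\|_{K-y}\le 1/r$ if and only if $y+rx\in K$ (using closedness of $K$ for the boundary case). Hence
\[
\vol_n\left(\{y\in K:\|x\|_{K-y}^{-1}\ge r\}\right)=\vol_n\left(K\cap(K-rx)\right)=g_K(-rx)=g_K(rx),
\]
where the last step uses that $g_K$ is even. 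Boundary points of $K$ form a null set, so they can be ignored. Write $\rho(y)=\|x\|_{K-y}^{-1}$, which is the distance in units of $x$ from $y$ to the boundary of $K$ along the direction $x$. The normalised quantity $\phi(r)=g_K(rx)/\vol_n(K)$ is then exactly the tail distribution $\mathbb P(\rho\ge r)$ of $\rho$ under the uniform probability on $K$.

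For $p>0$ I apply the layer-cake formula. It gives
\[
\frac{1}{\vol_n(K)}\int_K\rho^p\,\dlat y=\int_0^\infty p r^{p-1}\phi(r)\,\dlat r,
\]
and raising both sides to the power $-1/p$ yields the second line of \eqref{radial_ell}.

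For $p\in(-1,0)$ I write $\rho^{p}=\int_\rho^\infty |p| r^{p-1}\,\dlat r$, which is valid since $\rho>0$ on $\operatorname{int}K$. Tonelli then gives
\[
\frac{1}{\vol_n(K)}\int_K\rho^p\,\dlat y=|p|\int_0^\infty r^{p-1}\bigl(1-\phi(r)\bigr)\dlat r=p\int_0^\infty r^{p-1}\bigl(\phi(r)-1\bigr)\dlat r.
\]
Everything here is non-negative, so Tonelli applies without further justification. Finiteness is not needed for the identity itself, and it follows in any case from Lemma~\ref{l:star_body} together with \eqref{eq:g_K_facts}.

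For $p=0$ I compute $\int_K\log\rho\,\dlat y/\vol_n(K)=\int_0^\infty \log r\,\dlat(-\phi(r))$, which is the expectation of $\log\rho$ against its law. This law has no atom at $0$, and $g_K$ is Lipschitz, so the measure $\dlat(-\phi)$ has density $-\phi'$ almost everywhere and the third line of \eqref{radial_ell} follows. Alternatively, the $p=0$ line can be obtained from the $p\neq0$ cases by the continuity in Theorem~\ref{t:continuity}. On the $R_pK$ side, the matching continuity is the standard limit of power means to the geometric mean.

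For $p=\infty$ the claim is interpreted through the limit. By Theorem~\ref{t:continuity}(4), $K_\infty(g_K)=\supp(g_K)=DK$.

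The only point I expect to need care is the level-set identification. It uses the convention $\|x\|_{K-y}=\inf\{t:x\in t(K-y)\}$ and the evenness $g_K(-z)=g_K(z)$, and it must handle the case $\|x\|_{K-y}=0$. That case does not occur for bounded $K$ and $x\neq o$, so $\rho<\infty$ everywhere and the identity goes through.
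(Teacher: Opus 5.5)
Your proposal is correct and follows essentially the paper's route: the same Fubini/layer-cake computation, in which $\|x\|_{K-y}^{-p}$ is written as an integral of $r^{p-1}$ and the superlevel sets of $y\mapsto\|x\|_{K-y}^{-1}$ are identified with $K\cap(K-rx)$; your explicit appeal to the evenness of $g_K$ handles the sign that the paper passes over. The endpoint cases differ slightly from the paper, and both of your versions are valid: you compute $p=0$ directly from the atomless, absolutely continuous law of $\rho$ rather than only via the limit $p\to0$, and you obtain $p=\infty$ from Theorem~\ref{t:continuity}(4) applied to $g_K\in\LC^0$ rather than from the direct limit of the $p$-means.
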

\begin{proof}
We first consider the case when $p=\infty$. In this case, the $p$-mean defining $R_p K$ converges to the essential supremum of $\|\theta\|_{K-y}$ over $y\in K$, which is precisely $\|\theta\|_{DK}$.

Henceforth, we consider finite $p$. First, we suppose $p\neq 0$. The equivalence between \eqref{eq:radial_mean_og} and \eqref{radial_ell} is an application of Fubini's theorem. Indeed, for $p>0$, we have
\begin{align*}
\int_{K} \|x\|_{K-y}^{-p} \dlat y &= p\int_K\int_0^{\|x\|_{K-y}^{-1}}r^{p-1}\dlat r\,\dlat y
\\
&=p \int_{0}^{\|x\|_{D K}^{-1}}\left(\int_{K \cap(K+r x)} \dlat y\right) r^{p-1} \dlat r 
\\
&=p \int_{0}^{\|x\|_{D K}^{-1}} g_{K}(r x) r^{p-1} \dlat r 
\\
&= p \int_{0}^{+\infty} g_{K}(r x) r^{p-1} \dlat r,
\end{align*}
where, in the second step, we used the fact that $y\in K$ and $-rx\in K-y$ for all $0\leq r\leq \|x\|_{K-y}^{-1}$. Similarly, for $p\in (-1,0),$ we have 
\begin{align*}
&\int_{K} \|x\|_{K-y}^{-p} \dlat y =-p \int_K \int_{\|x\|_{K-y}^{-1}}^\infty  r^{p-1}\dlat r\dlat y
\\
&=-p\int_{0}^{\|x\|_{DK}^{-1}}\left(\int_{K\setminus {K\cap (K+rx)}}\dlat y\right)r^{p-1}\dlat r - p\int_K\int_{\|x\|_{DK}^{-1}}^\infty r^{p-1}\dlat r\dlat y.
\end{align*}
Adding and subtracting integration over $K\cap(K+rx),$ we obtain
\begin{align*}
\int_{K} \|x\|_{K-y}^{-p} \dlat y &=p\int_{0}^{\|x\|_{DK}^{-1}}(g_{K}(rx)-\vol_n(K))r^{p-1}\dlat r+\|x\|_{DK}^{-p}\vol_n(K)
\\
&=p\int_{0}^{\infty}(g_{K}(rx)-\vol_n(K))r^{p-1}\dlat r.
\end{align*}
From integration by parts, \eqref{radial_ell} re-writes as, for $p\neq 0$,
\begin{equation}\|x\|_{R_p K}=\left(\int_0^{+\infty}\frac{\partial}{\partial r}\left(-\frac{g_K(rx)}{\vol_n(K)}\right) r^{p}\dlat r\right)^{-\frac{1}{p}}, \quad x\in\R^n.
\label{eq:best_radial_form}
\end{equation}
Then, considering $p\to 0$, we obtain the third formula in \eqref{radial_ell}.
\end{proof}

The proof of the convexity of $R_p K$ is now immediate.
\begin{proof}[Proof of Theorem~\ref{t:main}]
    By Proposition~\ref{p:equivalence} and \eqref{eq:g_K_facts}, we may write
    \[
    R_p K=K_p(g_K), \qquad p>-1.
    \]
    The convexity of $R_p K$ follows from Theorem~\ref{t:ball_expanded}. Since $g_K \in \LC^0$, $R_p K$ is a convex body. Since $g_K$ is even, $R_p K$ is origin-symmetric.
\end{proof}

There have been a few notable expansions of radial $p$th mean bodies to various settings. We now demonstrate that each instance is a particular case of the extended Ball's bodies \eqref{eq:kbb}.
\medskip

\noindent {\bf The Analytic Setting.} In \cite{HL26}, J. Haddad and M. Ludwig introduced what we call the $L^2$ radial $p$th mean bodies $R_p f$ of $f\in \LC$. By defining what we call the $L^2$ covariogram of $f$,
\[
C_f(x)=\int_{\R^n}f(y)f(x+y)\dlat y,
\]
(because $C_f(o)$ is the $L^2$ norm of $f$ squared), the bodies $R_p f$ can be written as
\[
R_p f = K_p(C_f).
\]
Notice $C_f \in \LC^0$. By Theorem~\ref{t:ball_expanded}, $R_p f$ is a convex body for all $p>-1$.
\medskip

\noindent {\bf The Log-concave setting}. In \cite{ABG20}, D. Alonso-Guti\'errez, J. Bernu\'es, and B. Gonz\'alez Merino defined what we call the $L^1$ covariogram of $f\in \LC$:
\[
g_{f}(x)=\int_{\R^n}\min\{f(y),f(x+y)\}\dlat y
\]
(because $g_{f}(o)$ is the $L^1$ norm of $f$) to study a functional version of Zhang's projection inequality. In \cite{LMU25}, D. Langharst, F. Mar\'in Sola, and J. Ulivelli defined $L^1$ radial $p$th mean bodies of $f\in\LC$ directly via $K_p(g_f)$. 

\noindent Notice $g_{f}$ is even and log-concave. By Theorem~\ref{t:ball_expanded}, $K_p(g_f)$ is a convex body for all $p>-1$.

\medskip

\noindent {\bf The Weighted Setting.} In \cite{LRZ22}, D. Langharst, M. Roysdon and A. Zvavitch introduced the weighted covariogram of a convex body $K\subset \R^n$: let $f\in \LC$ and let $\mu$ be the Borel measure with density $f$. Then, $\mu$-weighted covariogram of $K$ is given by
\[
g_{K,\mu}(x)=\mu(K\cap(K+x)), \quad x\in\R^n.
\]
In \cite{LP25}, D. Langharst and E. Putterman defined weighted radial $p$th mean bodies $R_{p,\mu} K\subset \R^n$, which can be expressed as the relation
\[
R_{p,\mu} K = K_p(g_{K,\mu}).
\]
\noindent Since $K\cap (K+x)\subseteq K$, $g_{K,\mu}$ attains its maximum at the origin. Thus, by Proposition~\ref{p:pre}, $g_{K,\mu}\in \LC^0$. By Theorem~\ref{t:ball_expanded}, $R_{p,\mu} K$ is a convex body for all $p>-1$.

\medskip

{\bf The Higher-Order Setting.} In \cite{Sch70}, R. Schneider introduced, for $m\in \N$, the $m$th-order covariogram $g_{K,m}:(\R^n)^m\to \R_+$ of a convex body $K\subset \R^n$,
\[
g_{K,m}(x_1,\dots,x_m)=\vol_n(K\cap_{i=1}^m(K+x_i)), \quad x_i\in\R^n,
 \]
to study a generalization of the difference body of $K$. Indeed, he defined $D^m(K)\subset (\R^n)^m$ as the support of $g_{K,m}$. In \cite{HLPRY25}, J. Haddad, D. Langharst, E. Putterman, M. Roysdon, and D. Ye, introduced higher-order radial $p$th mean bodies $R^m_p K\subset (\R^n)^m$ of $K\subset \R^n$. In the current terminology, they are precisely
\[
R^m_p K = K_p(g_{K,m}).
\]
In \cite{LPRY25}, this development was combined with the weighted setting: defining $$g_{K,m,\mu}(x_1,\dots,x_m)=\mu(K\cap_{i=1}^m(K+x_i)), \quad x_i\in\R^n,$$ it holds
\[
R_{p,\mu}^m K = K_p(g_{K,m,\mu}).
\]
In \cite{LMU25}, the $L^1$ covariogram of a function was extended to the higher-order setting,
\[
g_{f,m}(x_1,\dots,x_m)=\int_{\R^n}\min\{f(y),f(y+x_1),\dots,f(y+x_m)\}\dlat y, \quad x_i\in\R^n,
\]
and then the $m$th-order $L^1$ radial $p$th mean bodies of $f$ were defined via $K_p(g_{f,m})\subset (\R^n)^m$.

\noindent Each of the covariograms $g_{K,m},g_{K,m,\mu},g_{f,m}\in \LC^0$. Consequently, Theorem~\ref{t:ball_expanded} yields the result that the bodies $R^m_p K,R^m_{p,\mu} K$ and $K_p(g_{f,m})$ are convex bodies for all $p>-1$.

\section{Proof of Theorem~\ref{t:ball_expanded}}
\label{sec:ball_expand}
This section is dedicated to the proof of Theorem~\ref{t:ball_expanded}. We will show that the convexity of $K_p(g)$ follows from Theorem~\ref{t:theorem}. We break the proof into two steps. 

\paragraph{Step 1: The Smooth Case.} 

We first prove the theorem under the additional assumptions $g\in \LC^0$, $p\neq 0,\infty$ and $g$ is $C^\infty$ smooth. We then remove these assumptions one by one.

To this end, we need a criterion for the convexity of Minkowski functionals on $\R^n$. A smooth, $1$-homogeneous function is taken to be smooth on $\R^n\setminus\{o\}$.
\begin{prop}
\label{p:deriv_test}
Let $\|\cdot\|:\R^n\to\R_+$ be a positively $1$-homogeneous, smooth, non-negative and non-identically zero
function. Then $\|\cdot\|$ is a gauge if and only if, for all
$u,\theta\in \R^n\setminus\{0\}$,
\[
\frac{\dlat^2}{\dlat t^2}\|u+t\theta\|\Big|_{t=0} \ge 0.
\]
\end{prop}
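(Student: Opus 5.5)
The plan is to reduce convexity of $\|\cdot\|$ on $\R^n$ to convexity of its restrictions to lines, and then to handle lines through the origin and lines avoiding the origin separately. A function $V:\R^n\to\R$ is convex if and only if, for every $u\in\R^n$ and every direction $\theta\neq 0$, the one-variable function $\phi(t)=V(u+t\theta)$ is convex on $\R$. Here $\|\cdot\|$ is smooth on $\R^n\setminus\{o\}$ and finite, non-negative and positively $1$-homogeneous everywhere, so it is continuous on $\R^n$ (at the origin by homogeneity and boundedness on the sphere). Once convexity is established, the gauge property follows, since a gauge is by definition a convex, non-negative, positively $1$-homogeneous Minkowski functional.

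\emph{Necessity.} If $\|\cdot\|$ is convex, then every restriction $t\mapsto \|u+t\theta\|$ is convex. Whenever $u+t\theta\neq o$ near $t=0$, this restriction is $C^2$, so its second derivative at $t=0$ is non-negative. If the line through $u$ in direction $\theta$ passes through $o$ at some other parameter, that point is away from $t=0$ and does not affect the smooth computation there.

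\emph{Sufficiency, lines avoiding the origin.} Fix $u,\theta$. If the line $\{u+t\theta\}$ does not meet $o$, then $\phi$ is $C^2$ on all of $\R$. Applying the hypothesis at the base point $u+t_0\theta$ in place of $u$ gives $\phi''(t_0)\ge 0$ for every $t_0$, so $\phi$ is convex.

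\emph{Sufficiency, lines through the origin.} If the line does pass through $o$, reparametrize so that the line is $\{s\theta\}$. Then $\phi(s)=\|s\theta\|$ equals $s\|\theta\|$ for $s\ge 0$ and $|s|\,\|-\theta\|$ for $s<0$. This is a piecewise linear function with a kink at $0$, and both slopes are taken on the correct sides because $\|\pm\theta\|\ge 0$. Such a function is convex.

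\emph{Main obstacle.} The delicate point is this last case: the origin is exactly where smoothness fails, so the derivative test cannot be applied there. The argument above avoids it by using homogeneity directly. Alternatively, one can obtain the convexity inequality $\|(1-\lambda)x+\lambda y\|\le(1-\lambda)\|x\|+\lambda\|y\|$ for segments through $o$ by perturbing $x,y$ slightly so the segment avoids $o$, and then passing to the limit using continuity of $\|\cdot\|$. I would use this perturbation argument as the backup if the piecewise-linear description needs more care, for instance when $\|\theta\|=0$ in some direction; that case is still fine, since the function remains convex piecewise linear.
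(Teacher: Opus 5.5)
Your proposal is correct and follows essentially the same classical route the paper has in mind: you check convexity line by line, and moving the base point $u$ along a line avoiding the origin shows that the second-derivative condition at $t=0$ covers every point of that line. Your separate treatment of lines through the origin via positive $1$-homogeneity, where the restriction is $s\mapsto\max\{s\|\theta\|,\,-s\|-\theta\|\}$ (a maximum of linear functions), fills in the one case that the paper's one-line justification leaves implicit, namely the origin, where smoothness fails.
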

This reduction is classical; the fact that it suffices to check just $t=0$ is due to the arbitrariness of $u$ and $\theta$. We next have the following rudimentary identity by direct computation. We choose to denote derivatives in $t$ using Newton's dot notation.
\begin{prop}
\label{p:chain_rule_final}
Let $\|\cdot\|:\R^n\to\R_+$ be a positively $1$-homogeneous, smooth, non-negative function that is positive on $\R^n\setminus\{o\}$. Let $p>-1$, $p\neq0,\infty$. Let
$u,\theta\in\R^n\setminus\{o\}$ and set
\[
u(t)=u+t\theta,
\qquad
H(t)=\|u(t)\|^{-p}.
\]
Then, for every $t\in\R$ such that $u(t)\neq o$,
\[
\frac{p^2}{\|u(t)\|}
\frac{\dlat^2}{\dlat t^2}\|u(t)\|=(1+p)\left(\frac{\dot H(t)}{H(t)}\right)^2-p\left(\frac{\ddot H(t)}{H(t)}\right).
\]
\end{prop}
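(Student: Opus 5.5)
The identity is a direct chain-rule computation, so the plan is to express everything through $N(t):=\|u(t)\|$ and then eliminate $N$ in favour of $H=N^{-p}$. Since $u(t)\neq o$ and $\|\cdot\|$ is smooth and positive away from the origin, $N$ is smooth and positive near $t$. Hence $H=N^{-p}$ is smooth there as well, and all quotients below are well defined.

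First I differentiate $H$ twice:
\[
\dot H=-pN^{-p-1}\dot N,
\qquad
\ddot H=-pN^{-p-1}\ddot N+p(p+1)N^{-p-2}\dot N^2 .
\]
Dividing by $H=N^{-p}$ gives the logarithmic derivatives
\[
\frac{\dot H}{H}=-p\frac{\dot N}{N},
\qquad
\frac{\ddot H}{H}=-p\frac{\ddot N}{N}+p(p+1)\frac{\dot N^2}{N^2}.
\]

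Next I substitute these into the right-hand side of the claimed identity:
\[
(1+p)\left(\frac{\dot H}{H}\right)^2-p\frac{\ddot H}{H}
=(1+p)p^2\frac{\dot N^2}{N^2}+p^2\frac{\ddot N}{N}-p^2(p+1)\frac{\dot N^2}{N^2}
=p^2\frac{\ddot N}{N}.
\]
The $\dot N^2$ terms cancel exactly. What remains is $\frac{p^2}{\|u(t)\|}\frac{\dlat^2}{\dlat t^2}\|u(t)\|$, which is the left-hand side.

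There is no genuine obstacle here. The only point needing care is the well-definedness of the quotients, namely positivity and smoothness of $N$ near $t$. This comes from the hypotheses that $\|\cdot\|$ is positive and smooth on $\R^n\setminus\{o\}$ together with $u(t)\neq o$. The restriction $p\neq 0$ is not needed for the algebra; it matters only for the subsequent use, where dividing by $p^2$ shows that $\ddot N\ge 0$ is equivalent to $(1+p)\dot H^2\ge pH\ddot H$.
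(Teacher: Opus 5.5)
Your computation is correct and is exactly the direct chain-rule verification the paper intends; the paper offers no written proof beyond calling the identity a ``rudimentary identity by direct computation.'' Writing $H=N^{-p}$ with $N(t)=\|u(t)\|$, the $\dot N^2$ terms cancel and leave $p^2\ddot N/N$, and your check that $N$ is smooth and positive near $t$ (because $u(s)\neq o$ for $s$ close to $t$) is the only point that needs care.
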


We now specialize the above reductions to $K_p(g)$ for $g\in \LC^0$. We introduce the notation $\nabla^2 g(x) = \left(\frac{\partial^2}{\partial  x_i \partial x_j}g(x)\right)_{i,j}$ for the Hessian matrix of $C^2$ function $g$ at $x\in \R^n$. 
\begin{thm}
\label{t:main_argument_ball}
    Let $g\in \LC^0$ be $C^\infty$ smooth and assume that there exist constants $A,B>0$ such that
\[
|g(x)|
+
\sum_{j=1}^n
\left|\frac{\partial g}{\partial x_j}(x)\right|
+
\sum_{i,j=1}^n
\left|
\frac{\partial^2 g}{\partial x_i\partial x_j}(x)
\right|
\leq
Ae^{-B|x|},
\qquad x\in\R^n.
\]

    Let $p>-1, p\neq 0,\infty$. Fix $u,\theta\in \R^n\setminus\{o\}$ and set $u(t)=u+t\theta,$ $t\in \R$. Define $H(t)=\|u(t)\|_{K_p(g)}^{-p}$ and set $H=H(0),$ $\dot H=\dot H(0),$ $\ddot H=\ddot H(0)$. Then, 
    \begin{equation}
    \label{eq:inequalities}
   p\left(\frac{\ddot H}{H}\right)\leq (1+p)\left(\frac{\dot H}{H}\right)^2.
     \end{equation}
     In particular, $\|\cdot\|_{K_p(g)}$ is convex on $\R^n$.
\end{thm}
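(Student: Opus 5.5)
The plan is to reduce \eqref{eq:inequalities} to Theorem~\ref{t:theorem} applied to a two-dimensional restriction of $g$. Fix $u,\theta\in\R^n\setminus\{o\}$ and define
\[
f(r,s)=g(ru+s\theta),\qquad (r,s)\in\R_+^2 .
\]
Then $g(r\,u(t))=f(r,rt)$, so everything about $H(t)$ is expressed through $f$ along the rays $s=rt$.

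\textbf{Step 1 (parallel case).} First dispose of the degenerate case where $\theta$ is parallel to $u$, say $\theta=cu$. Then $u(t)=(1+ct)u$, so
\[
\|u(t)\|_{K_p(g)}=|1+ct|\,\|u\|_{K_p(g)}
\]
is affine near $t=0$. Hence $\ddot H/H=(1+p)(\dot H/H)^2/p$ holds with the required inequality by direct computation, or one can simply note that the second derivative of the norm vanishes.

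\textbf{Step 2 (checking the hypotheses of Theorem~\ref{t:theorem}).} In the independent case the map $(r,s)\mapsto ru+s\theta$ is injective linear, so $|ru+s\theta|\ge c\sqrt{r^2+s^2}$ for some $c>0$. Combined with the assumed exponential bound on $g$ and its first two derivatives, this gives hypothesis (2) for $f$ with constants $A$ and $Bc$. Next, $\max f=f(0,0)=g(o)$. Finally, $f$ extended by zero off $\R_+^2$ is the product of the log-concave function $g\circ(\text{linear map})$ with the indicator of a closed convex cone. It is therefore log-concave and upper semicontinuous, and it is integrable by the decay. So $f\in\operatorname{LC}_2$.

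\textbf{Step 3 (formulas for $H,\dot H,\ddot H$).} Use the unified form \eqref{eq:unified}, equivalently $H(t)=-\frac{1}{g(o)}\int_0^\infty r^p\,\frac{\partial}{\partial r}\bigl[f(r,rt)\bigr]\,\dlat r$. For $p\in(-1,0)$ work instead with $H(t)=\frac{p}{g(o)}\int_0^\infty r^{p-1}\bigl(f(r,rt)-f(0,0)\bigr)\dlat r$, and for $p>0$ with $\frac{p}{g(o)}\int_0^\infty r^{p-1}f(r,rt)\,\dlat r$. Differentiating under the integral sign gives
\[
\dot H=\frac{p}{g(o)}\int_0^\infty r^{p}f_s(r,0)\,\dlat r,\qquad \ddot H=\frac{p}{g(o)}\int_0^\infty r^{p+1}f_{ss}(r,0)\,\dlat r .
\]
This is justified by dominated convergence, using the exponential bounds for large $r$ and the factor $r^{p+1}$ near $0$. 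Integrating by parts in $r$ (once for $\dot H$, twice for $H$), with boundary terms vanishing because $p>-1$, $f(r,0)-f(0,0)=O(r)$ and the tails decay exponentially, I expect
\[
H=\frac{1}{(p+1)g(o)}F_{rr},\qquad \dot H=-\frac{p}{(p+1)g(o)}F_{rs},\qquad \ddot H=\frac{p}{g(o)}F_{ss},
\]
where $F_{ab}=\int_0^\infty r^{p+1}f_{ab}(r,0)\,\dlat r$.

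\textbf{Step 4 (reduction to Theorem~\ref{t:theorem}).} Since $H>0$ by Lemma~\ref{l:star_body}, we have $F_{rr}>0$. Substituting the formulas from Step 3, \eqref{eq:inequalities} becomes
\[
p^2(p+1)\frac{F_{ss}}{F_{rr}}\le (p+1)p^2\frac{F_{rs}^2}{F_{rr}^2},
\]
that is, $F_{rr}F_{ss}\le F_{rs}^2$. This is exactly \eqref{eq:final_goal_updated_2d}.

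\textbf{Step 5 (convexity).} By Proposition~\ref{p:chain_rule_final}, $\frac{\dlat^2}{\dlat t^2}\|u(t)\|_{K_p(g)}\big|_{t=0}\ge 0$ for all $u,\theta$. The norm is smooth off the origin by the same differentiation under the integral sign. Proposition~\ref{p:deriv_test} then gives convexity.

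The main obstacle is the bookkeeping in Step 3 for $p\in(-1,0)$. There one must verify that the boundary terms $r^{p+1}f_r(r,0)$ and $r^p\bigl(f(r,0)-f(0,0)\bigr)$ vanish at $0$, and that differentiation in $t$ commutes with the improper integral near $r=0$. Both follow from the $C^2$ bounds together with $p+1>0$.
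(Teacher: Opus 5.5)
Your proposal is correct and follows essentially the same route as the paper. Both handle the parallel case by homogeneity, set $G(r,s)=g(ru+s\theta)$, and integrate by parts to get $g(o)H=\frac{1}{p+1}\int r^{p+1}G_{rr}$, $g(o)\dot H=-\frac{p}{p+1}\int r^{p+1}G_{rs}$ and $g(o)\ddot H=p\int r^{p+1}G_{ss}$, which reduces the claim to Theorem~\ref{t:theorem}; you also verify that theorem's hypotheses explicitly, where the paper only asserts them. One small fix: writing $g(ru(t))=f(r,rt)$ for $t<0$ needs $f$ defined on $\R_+\times\R$, so define it there and restrict to $\R_+^2$ only when applying Theorem~\ref{t:theorem}.
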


\begin{proof}
Recall we defined, for $u,\theta\in\R^n\setminus\{o\}$ and $t\in \R$ the curve $u(t)=u+t\theta$ and the function in $t$
\begin{equation}
    \label{eq:the_function}
    H(t) \!=\! \left\|u(t)\right\|^{-p}_{K_p(g)} \!\!=\!
    \begin{cases}
    \frac{p}{g(o)}\int_{\R_+}r^{p-1}g(ru(t))\dlat r,& p>0,
    \\
        \frac{p}{g(o)}\int_{\R_+}r^{p-1}\left(g(ru(t))-g(o)\right)\dlat r,& p\in (-1,0)
    \end{cases}.
\end{equation}

If $u$ and $\theta$ are linearly dependent, then the desired conclusion
follows immediately from the positive $1$-homogeneity of
$\|\cdot\|_{K_p(g)}$. We may therefore assume that $u$ and $\theta$
are linearly independent.

We will reduce the inequality \eqref{eq:inequalities} we want to show to a two-dimensional statement. To this end, define 
    $$G(r,s)=g(ru+s\theta).$$
We use subscripts to denote derivatives in a given variable, e.g.
    \begin{equation}
    \label{eq:G_dervis}
    \begin{split}
    G_s(r,0)&=\frac{\partial}{\partial s}\bigg|_{s=0}G(r,s)=\langle \nabla g(ru),\theta\rangle
    \\
    &\text{and}
    \\
    G_r(r,0)&=\frac{\partial}{\partial r}\bigg|_{s=0}G(r,s)=\langle \nabla g(ru),u\rangle.
    \end{split}
    \end{equation}

    Since we will be taking the derivative at $t=0$, we henceforth assume that $t\in (-\eta,\eta),$ where $\eta>0$ is sufficiently small as needed. We recall the formula from \eqref{eq:unified}:
\begin{equation}
H(t)=\frac{1}{g(o)}\int_0^\infty \left(-\frac{\partial}{\partial r}g(ru(t))\right)r^p\dlat r.
    \label{eq:updated_unified}
\end{equation}
Observe that $\frac{\partial}{\partial r}g(ru) = \langle \nabla g(ru),u\rangle$. By evaluating at $t=0$, and setting $H=H(0)$, we see that
\begin{equation}
\label{eq:updated_unified_0}
    g(o)\cdot H=\int_{\R_+}r^p \left(-G_r(r,0)\right)\dlat r = \frac{1}{p+1}\int_{\R_+}r^{p+1}  G_{rr}(r,0)\dlat r.
\end{equation}
Consider the $p<0$ case in \eqref{eq:the_function}. By adding and subtracting $g(ru)$,
\begin{equation}
    \label{eq:the_function_updated}
    H(t)  =\frac{p}{g(o)}\int_{\R_+}r^{p-1}\left(g(ru(t))-g(ru)\right)\dlat r + H,
\end{equation}
where $g(ru(t))-g(ru)$ is integrable in $r$ against $r^{p-1}$ since the function $t\mapsto g(ru(t))$ is $C^\infty$. Indeed, it has a second order Taylor polynomial at $t=0$:
\[
g(ru(t))=g(ru)+t\frac{\partial}{\partial t}\bigg|_{t=0}g(ru(t))+ \frac{t^2}{2}\frac{\partial^2}{\partial t^2}\bigg|_{t=0}g(ru(t)) +R(r,t),
\]
where the remainder term $R(r,t)$ satisfies $\frac{\partial^{k}}{\partial t^k}\big|_{t=0}R(r,t)=0$ for $k\in \{0,1,2\}$ and is on the order of $t^3$. In particular, observe that
\begin{equation}
\label{eq:taylor_deriv}
\begin{split}
\frac{\partial}{\partial t}g(ru(t))&=\frac{\partial}{\partial t}\left(g(ru(t))-g(ru)\right)
\\
&=\frac{\partial}{\partial t}\bigg|_{t=0}g(ru(t)) + t\frac{\partial^2}{\partial t^2}\bigg|_{t=0}g(ru(t)) + \frac{\partial}{\partial t} R(r,t) 
\\
&= r\langle \nabla g(ru),\theta\rangle + tr^2 \langle \nabla^2 g(ru)\theta,\theta \rangle + \frac{\partial}{\partial t} R(r,t).
\end{split}
\end{equation}
    
    Consequently, by differentiating \eqref{eq:the_function} for $p>0$ or \eqref{eq:the_function_updated} for $p\in (-1,0),$ we deduce from \eqref{eq:taylor_deriv}
    \begin{equation}
    \label{eq:the_deriv}
    \begin{split}
        \dot g(o)\cdot H(t)=p\int_{\R_+}&r^{p}\langle \nabla g(ru),\theta\rangle \dlat r 
        \\
        &+ p\int_{\R_+}r^{p-1}\left(tr^2 \langle \nabla^2 g(ru)\theta,\theta \rangle+\frac{\partial}{\partial t} R(r,t)\right)\dlat r.
    \end{split}
    \end{equation}
 The differentiation underneath the integral sign was valid due to our decay hypothesis. By evaluating \eqref{eq:the_deriv} at $t=0$, setting $\dot H=\dot H(0)$, and using \eqref{eq:G_dervis}, we obtain
    \begin{equation}
    \label{eq:the_deriv_0}
    \begin{split}
       g(o)\cdot \dot H &= p\int_{\R_+}r^p \langle \nabla g(ru),\theta\rangle \dlat r
       \\\
       &=p\int_{\R_+}r^{p} G_s(r,0) \dlat r
       \\
       &=-\frac{p}{p+1}\int_{\R_+}r^{p+1}G_{rs}(r,0)\dlat r,
       \end{split}
    \end{equation}
    where the last equality follows from integration by parts.
    Taking another derivative of \eqref{eq:the_deriv} and evaluating the result at $t=0$ yields, upon setting $\ddot H:=\ddot H(0)$,
    \begin{equation}
    \label{eq:the_second_deriv}
    g(o)\cdot \ddot H = p\int_{\R_+}r^{p+1}\langle \nabla^2 g(ru)\theta,\theta \rangle \dlat r = p\int_{\R_+}r^{p+1}G_{ss}(r,0) \dlat r.
    \end{equation}
    Using the formulas \eqref{eq:updated_unified_0}, \eqref{eq:the_deriv_0}, and \eqref{eq:the_second_deriv}, our goal \eqref{eq:inequalities} becomes for all $p>-1,\, p\neq 0,$
    \begin{equation*}
     \left(\int_{\R_+}r^{p+1} G_{rr}(r,0)\dlat r\right)\cdot\left(\int_{\R_+}r^{p+1}G_{ss}(r,0) \dlat r\right) \leq \left(\int_{\R_+}r^{p+1}G_{rs}(r,0)\dlat r\right)^2.
    \end{equation*}
    The function $G$ satisfies the assumptions of Theorem~\ref{t:theorem}, hence the inequality follows.

    The ``in particular'' part follows from Propositions~\ref{p:deriv_test} and \ref{p:chain_rule_final}.
    \end{proof}

    \paragraph{Step 2: Removing the Regularity Assumptions.} We now remove the assumptions made on $g$ and $p$. First, we show that the assumption that $g$ is $C^\infty$ smooth can be dropped. Temporarily, we will add the assumption that $g$ has a unique maximum, and then drop this later as well.
    
    We now show that an arbitrary $g\in \LC^0$ with a unique maximum at the origin can be approximated by a sequence of $C^\infty$ smooth $g_k\in \LC^0$ which attain their maxima at the origin.

To this end, set, for $k\in \N$
\[
\gamma_k(x)=\frac{1}{\left(\frac{4\pi}{k}\right)^\frac{n}{2}}e^{-k\frac{|x|^2}{4}},
\]
the standard Gaussian probability density with variance $\frac{2}{k}$. Then $\gamma_k$ is $C^\infty$ smooth and $\|\gamma_k\|_{L^1}= 1$. Finally, we set 
\begin{equation}
\label{eq:g_eps}
\tilde g_k(x)=(g\ast \gamma_k)(x)=\int_{\R^n}g(z)\gamma_k(x-z)\dlat z.
\end{equation}
Since $g$ and $\gamma_k$ are log-concave, we have by Pr\'ekopa's theorem, Proposition~\ref{p:pre}, that their convolution is log-concave, i.e. $\tilde g_k$ is log-concave. We also have that $\tilde g_k$ is integrable:
\[
0<\int_{\R^n}\tilde g_k(x)\dlat x = \int_{\R^n}\int_{\R^n}g(z)\gamma_k(x-z)\dlat z\,\dlat x = \|g\|_{L^1}\|\gamma_k\|_{L^1} = \|g\|_{L^1}.
\]
It is classical that $\tilde g_k\to g$ almost everywhere (in fact, point-wise and uniformly on compact subsets of the interior of the support of $g$, see \cite[Fact 2.5]{CEFL24}). 

We must handle the issue that $\tilde g_k$ may no longer have maximum at the origin (indeed, consider the one-dimensional example $g=e^{-|\,\cdot\,|}\chi_{[-1,\infty)}$). To correct for this: define a sequence of convex functions $V_k$ via $\tilde g_k=e^{-V_k}$. Then, a maximum of $\tilde g_k$ is a minimum of $V_k$. Let
\[
E_k = \arg \max \tilde g_k = \arg \min V_k, \qquad E= \arg \max g = \arg \min \left(-\log g\right).
\]
Furthermore, since each $\tilde g_k$ is upper semi-continuous, each $V_k$ is lower semi-continuous. It follows indirectly from \cite[Corollary 27.2.1]{RTR70} and directly from \cite[Theorem 7.33]{RW97} (in the language of \textit{epi-convergence}) that, if we set $a_k=\min V_k$ and $a=\min V$, then $a_k\to a$ as $k\to \infty$, and, moreover, any sequence given by $v(k)\in E_k$ is bounded, and every cluster point belongs to $E$: formally,
\[
\limsup_{k\to\infty} E_k \subseteq E.
\]
Furthermore, if $E$ is a singleton $x_0$, then, by necessity, every converging sequence given by $x(k)\in E_k$ converges to $x_0$. This is our situation (for now); by the aforementioned results, we may select a sequence $x(k)$ so that $x(k)\in \operatorname{int}\supp(\tilde g_k)\cap E_k$ (i.e. $\tilde g_k(x(k))=\max \tilde g_k$) and $x(k)\to o$ as $k\to \infty$. Thus, we define $g_k$ for $k\in \N$ via 
\begin{equation}
\label{eq:g_k_def}
g_k(x)=\tilde g_k(x+x(k)).
\end{equation}
Notice that $g_k$ inherits the following properties from $\tilde g_k:$
\begin{enumerate}
    \item $g_k \in \LC^0 \quad \forall \; k\in \N,$ since $\supp (g_k) = \supp (\tilde g_k) = \R^n;$
    \item $g_k\to g$ as $k\to \infty$ (in the same uniform sense).
\end{enumerate}
Using the fact that $g,g_k$ are upper semi-continuous, we will show that $\|\cdot\|_{K_p(g_k)}\to \|\cdot\|_{K_p(g)}$ as $k \to \infty$ point-wise.
\begin{prop}
\label{p:convergence}
    Let $g\in \LC^0$ have a unique maximum at the origin and define the approximating sequence $g_k$ via \eqref{eq:g_k_def}. Fix $x\in \R^n$. Then, for every $p>-1, p\neq 0,\infty,$ it holds that $$\lim_{k\to \infty} \|x\|_{K_p(g_k)}=\|x\|_{K_p(g)}.$$
    In particular, $\|\cdot\|_{K_p(g)}$ is convex on $\R^n$.
\end{prop}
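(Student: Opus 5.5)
Fix $x\neq o$ (the case $x=o$ is trivial). Since $\|x\|_{K_p(g)}\in(0,\infty)$ by Lemma~\ref{l:star_body}, and $t\mapsto t^{-1/p}$ is continuous on $(0,\infty)$, it suffices to show convergence of the integrals
\[
\int_0^\infty r^{p-1}g_k(rx)\,\dlat r \quad (p>0), \qquad \int_0^\infty r^{p-1}\bigl(g_k(rx)-g_k(o)\bigr)\,\dlat r \quad (p\in(-1,0)),
\]
together with $g_k(o)\to g(o)$. The plan is dominated convergence in $r$, in three steps.

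\textbf{Step 1: pointwise convergence.} We first check that $g_k(rx)\to g(rx)$ for almost every $r>0$ and that $g_k(o)\to g(o)$. Since $g_k(y)=\tilde g_k(y+x(k))$ with $x(k)\to o$, and $\tilde g_k\to g$ locally uniformly on $\operatorname{int}\supp(g)$, we get $g_k\to g$ pointwise on $\operatorname{int}\supp(g)$. Because $o\in\operatorname{int}\supp(g)$, this includes the origin. The ray $\{rx\}$ meets $\partial\supp(g)$ in at most one point. Off $\supp(g)$, the value $\tilde g_k(y)$ tends to $0$: since $g=0$ near such $y$, the Gaussian tails force this.

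\textbf{Step 2: a uniform tail bound.} The key quantitative input is a bound $g_k(y)\le Ae^{-B|y|}$ with $A,B$ independent of $k$. I would derive it from Proposition~\ref{p:integrable}: $g\le A_0e^{-B_0|z|}$ gives
\[
\tilde g_k(y)\le A_0\int e^{-B_0|z|}\gamma_k(y-z)\,\dlat z \le A_0 e^{-B_0|y|}\int e^{B_0|w|}\gamma_k(w)\,\dlat w,
\]
and the last integral is bounded uniformly for $k\ge1$. Since $|x(k)|$ is bounded, the shift only changes the constants. This handles $r\ge1$ in both regimes. For $p>0$ it also handles $r<1$, via the bound $r^{p-1}\sup_k\|g_k\|_\infty$ with $\|g_k\|_\infty\le\|g\|_\infty$.

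\textbf{Step 3: the region near the origin for $p\in(-1,0)$ (main obstacle).} Here we need
\[
|g_k(rx)-g_k(o)|\le Cr \quad \text{for } 0\le r\le\delta,
\]
uniformly in $k$. I would obtain this from log-concavity. Write $g_k=e^{-W_k}$ with $W_k$ convex and minimized at $o$. Pick a ball $B(o,2\rho)\subset\operatorname{int}\supp(g)$. On this ball, $g$ is bounded below by some $c>0$. By uniform convergence on the compact set $\overline{B(o,2\rho)}$ and $x(k)\to o$, we get $g_k\ge c/2$ there for large $k$, and $g_k\le\|g\|_\infty$. Hence $W_k$ is bounded above and below on $B(o,2\rho)$ uniformly in $k$. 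The standard estimate for convex functions then yields a uniform Lipschitz constant for $W_k$ on $B(o,\rho)$, namely $\operatorname{osc}/\rho$. This in turn gives a uniform Lipschitz bound for $g_k$ there, which is exactly the required estimate since $p>-1$ makes $r^{p-1}\cdot r$ integrable near $0$. On $[\delta,1]$ the integrand is bounded.

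\textbf{Conclusion.} Dominated convergence now gives $\|x\|_{K_p(g_k)}\to\|x\|_{K_p(g)}$. The $g_k$ are $C^\infty$, lie in $\LC^0$, and satisfy the derivative decay hypothesis of Theorem~\ref{t:main_argument_ball}. For that hypothesis, derivatives of $g\ast\gamma_k$ fall on $\gamma_k$, and a polynomial times a Gaussian is absorbed as in Step 2. Therefore each $\|\cdot\|_{K_p(g_k)}$ is convex, and Proposition~\ref{p:convex_converge} gives convexity of the pointwise limit $\|\cdot\|_{K_p(g)}$, which is finite and proper by Lemma~\ref{l:star_body}.
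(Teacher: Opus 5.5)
Your proof is correct and follows the paper's skeleton. Both use dominated convergence in $r$ with a $k$-uniform exponential majorant for the tail and, for $p\in(-1,0)$, a $k$-uniform $O(r)$ bound on $|g_k(rx)-g_k(o)|$ near $r=0$. Both then conclude via Theorem~\ref{t:main_argument_ball} and Proposition~\ref{p:convex_converge}.

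The difference lies in how the two estimates are produced.

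\textbf{Tail bound.} The paper splits the convolution at $|y|=\frac12|rx+x(k)|$ and applies a Gaussian Chernoff bound to the outer piece. You use $e^{-B_0|y-w|}\le e^{-B_0|y|}e^{B_0|w|}$ and note that $\int e^{B_0|w|}\gamma_k(w)\,\dlat w$ is bounded uniformly in $k\ge 1$. This gives the same majorant more directly.

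\textbf{Near the origin.} The paper stays on the ray. Because the recentering by $x(k)$ makes $o$ the maximizer of $g_k$, log-concavity between $o$ and $\eta x$ gives $0\le g_k(o)-g_k(rx)\le g_k(o)\bigl(1-c_1^{r/\eta}\bigr)$. This needs only convergence at the two points $o$ and $\eta x$.

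You instead use locally uniform convergence on a ball to bound $W_k=-\log g_k$ above and below uniformly in $k$. You then invoke the standard fact that a convex function with oscillation at most $\omega$ on $B(o,2\rho)$ is $\omega/\rho$-Lipschitz on $B(o,\rho)$. This never uses where $g_k$ attains its maximum, and it gives a uniform Lipschitz bound on a whole neighborhood of $o$. The price is a stronger convergence input, which the paper has available anyway.

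\textbf{Details the paper leaves implicit.} Your Step 1 spells out the almost-everywhere convergence $g_k(rx)\to g(rx)$, including off $\supp(g)$. Your justification of the derivative-decay hypothesis via $\partial^\alpha(g\ast\gamma_k)=g\ast\partial^\alpha\gamma_k$ fills in what the paper calls ``clear''.
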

\begin{proof}
    At $x=o$, we have $\|o\|_{K_p(g_k)}=\|o\|_{K_p(g)}=0$ for all $k$. For all other $x\in \R^n$, it suffices to show that $\|x\|_{K_p(g_k)}^{-p}\to \|x\|_{K_p(g)}^{-p}$, since $t\mapsto t^{-p}$ is continuous away from zero. Recall that there exists $a,c$ such that  
\begin{equation}
\label{eq:g_bounds}
g(x)\leq a e^{-c|x|}.
\end{equation}
Then, for every $x\in \R^n\setminus\{o\}$, we have, for $r>0$,
\begin{align*}
g_k(rx) &= \int_{\R^n}g(z)\gamma_k(rx +x(k)-z)\dlat z = \int_{\R^n}g(rx +x(k)-y)\gamma_k(y)\dlat y.
\end{align*}
We now split the integral. For ease of presentation, we set $x^\prime=rx+x(k)$. Over $|y|\leq \frac{1}{2}|x^\prime|$ we have, by the reverse triangle inequality $|x^\prime-y| \geq |x^\prime|-|y|\geq \frac{1}{2}|x^\prime|$ and \eqref{eq:g_bounds}
\begin{align*}
\int_{\{y\in\R^n:|y|\leq \frac{1}{2}|x^\prime|\}}\!\!\!\!\!\!\!\!\!\!\!\!\!\!\!g(x^\prime-y)\gamma_k(y)\dlat y &\leq a \int_{\{y\in\R^n:|y|\leq \frac{1}{2}|x^\prime|\}}\!\!\!\!\!\!\!\!\!\!\!\!e^{-c|x^\prime-y|}\gamma_k(y)\dlat y 
\\
&\leq  ae^{-\frac{c}{2}|x^\prime|} \int_{\{y\in\R^n:|y|\leq\frac{1}{2}|x^\prime|\}}\!\!\!\!\!\!\!\!\!\!\gamma_k(y)\dlat y \leq ae^{-\frac{c}{2}|x^\prime|}.
\end{align*}

For $|y|> \frac{1}{2}|x^\prime|$, we use that $\max g=g(o)$ and then Chernoff's inequality for the standard Gaussian distribution to get 
\begin{align*}
\int_{\{y\in\R^n:|y| > \frac{1}{2}|x^\prime|\}}\!\!\!\!\!\!\!\!\!\!\!\!g(x^\prime-y)\gamma_k(y)\dlat y &\leq g(o) \int_{\{y\in\R^n:|y| > \frac{1}{2}|x^\prime|\}}\gamma_k(y)\dlat y 
\\
&\leq  2ng(o) e^{-\frac{k|x^\prime|^2}{16n}}.
\end{align*}
We then use the fact that Gaussians are integrable and log-concave to deduce the existence of $a^\prime,c^\prime$ such that $2ng(o) e^{-\frac{k|x^\prime|^2}{16n}} \leq a^\prime e^{-\frac{c^\prime}{2}|x^\prime|}$. Then, since there exists $A,C>0$ such that
\[
ae^{-\frac c2|x'|}
+a'e^{-\frac{c'}2|x'|}
\leq
Ae^{-C|x'|},
\]
we have
\[
|g_k(rx)| \leq Ae^{-C|rx +x(k)|} \leq Ae^{C|x(k)|}e^{-Cr|x|},
\]
where, in the last line, we used the reverse triangle inequality in the form $|rx +x(k)|\geq r|x|-|x(k)|.$ We now use the fact that, since $x(k) \to o$, there exists $R>0$ large enough so that $x(k)\in RB_2^n$ for all $k$. Thus, we can find $A^\prime$ so that
\begin{equation}
\label{eq:the_bound}
g_k(rx) \leq A^\prime e^{-Cr|x|}.
\end{equation}

Since $A^\prime e^{-Cr|x|}$ is integrable against $r^{p-1}$ for $p>0$, we have, by the dominated convergence theorem, $\|x\|_{K_p(g_k)}^{-p}\to \|x\|_{K_p(g)}^{-p}$ for $p>0$. Regarding $p\in (-1,0)$, we split the integral:
\begin{equation}
\label{eq:two_ints}
\begin{split}
\|x\|_{K_p(g_k)}^{-p} = \frac{p}{g_k(o)}\int_0^{1}r^{p-1}&(g_k(rx)-g_k(o))\dlat r
\\
&+ \frac{p}{g_k(o)}\int_{1}^{\infty}r^{p-1}(g_k(rx)-g_k(o))\dlat r.
\end{split}
\end{equation}
For the latter integral in \eqref{eq:two_ints}, we have
\[
\frac{p}{g_k(o)}\int_{1}^{\infty}r^{p-1}(g_k(rx)-g_k(o))\dlat r = \frac{p}{g_k(o)}\int_{1}^{\infty}r^{p-1}g_k(rx)\dlat r + 1,
\]
and then \eqref{eq:the_bound} tells us that $\frac{p}{g_k(o)}\int_{1}^{\infty}r^{p-1}g_k(rx)\dlat r \to \frac{p}{g(o)}\int_{1}^{\infty}r^{p-1}g(rx)\dlat r$ by the dominated convergence theorem. Thus, it remains to study the first integral in \eqref{eq:two_ints}.

Therefore, we need to show that
\begin{equation}
\label{eq:p_neq_approx}
\lim_{k\to \infty} \frac{p}{g_k(o)}\int_0^{1}r^{p-1}(g_k(rx)-g_k(o))\dlat r = \frac{p}{g(o)}\int_0^{1}r^{p-1}(g(rx)-g(o))\dlat r.
\end{equation}
To handle the singularity near zero when $p\in(-1,0)$, we use log-concavity along the ray.
Fix $\eta\in (0,1)$ such that $\eta x\in \operatorname{int}\left(\supp (g)\right)$. We split the integral from $[0,\eta]$ and $(\eta,1)$. Notice that
\begin{align*}
|g_k(rx)-g_k(o)|&\leq \int_{\R^n}|g(rx+y)-g(y)|\gamma_k(x(k)-y)\dlat y 
\\
&\leq 2g(o)\int_{\R^n}\gamma_k(x(k)-y)\dlat y 
\\
&\leq 2g(o).
\end{align*}
Consequently, we may use the dominated convergence theorem and deduce
\[
\lim_{k\to \infty} \frac{p}{g_k(o)}\int_{\eta}^{1}r^{p-1}(g_k(rx)-g_k(o))\dlat r = \frac{p}{g(o)}\int_\eta^1 r^{p-1}(g(rx)-g(o))\dlat r.
\]
We now focus on the integral from $0$ to $\eta$. 

Since $g_k\to g$ point-wise and $g(\eta x)>0$, $g(o)>0$, there exists $c_1>0$ such that
\begin{equation}
\label{eq:g_k_lower}
\frac{g_k(\eta x)}{g_k(o)}\ge c_1>0
\end{equation}
for all $k$ large enough. For each $k$ large enough so that \eqref{eq:g_k_lower} holds, we use that the function $r\mapsto g_k(rx)$ is log-concave on $\R_+$ and achieves its maximum at $r=0$ to deduce, for every $r\in[0,\eta]$,
\[
g_k(rx) = g_k\left(\left(1-\frac{r}{\eta}\right)o+\frac{r}{\eta}(\eta x)\right)\ge g_k(o)^{1-\frac{r}{\eta}}g_k(\eta x)^\frac{r}{\eta},
\]
so, by \eqref{eq:g_k_lower},
\[
g_k(rx)\ge  g_k(o)\left(\frac{g_k(\eta x)}{g_k(o)}\right)^\frac{r}{\eta}\geq g_k(o)c_1^\frac{r}{\eta}.
\]
Therefore,
\[
0\le g_k(o)-g_k(rx)
\le g_k(o)\left(1-c_1^\frac{r}{\eta}\right).
\]
We have $1-c_1^\frac{r}{\eta} \leq Cr$ for some $C>0$ and $r\in (0,\eta)$. Thus,
\[
0\le g_k(o)-g_k(rx) \le g_k(o)Cr
\]
for some constant $C>0$ independent of $k$ and all $r\in[0,\eta]$.
Consequently, since $p<0$,
\[
\frac{p}{g_k(o)}\left(g_k(rx)-g_k(o)\right)r^{p-1}\le |p|Cr^p,
\]
and since $p>-1$, the right-hand side is integrable on $(0,\eta)$. We may again apply the dominated convergence theorem and deduce \eqref{eq:p_neq_approx}. We have thus shown that $$\|x\|_{K_p(g_k)}^{-p}\to \|x\|_{K_p(g)}^{-p}$$ for $p\in (-1,0)$ and all $x\in \R^n$.

For the ``in particular'' part, we have by Theorem~\ref{t:main_argument_ball} that $\|\cdot\|_{K_p(g_k)}$ is a convex function for every $k$; it is clear that $g_k$ is $C^\infty$ smooth and satisfies the necessary exponential decay of its derivatives. By Proposition~\ref{p:convex_converge}, $\|\cdot\|_{K_p(g)}$ is therefore convex.
\end{proof}

We may now prove that $K_p(g)$ is a convex body for every $g\in \LC^0$. 

\begin{proof}[Proof of Theorem~\ref{t:ball_expanded}]
First observe that $K_\infty(g)=\supp(g)$ is convex since $g$ is log-concave; we may assume $p\neq +\infty$. We now fix $p>-1,p\neq 0$. 

We have shown in Proposition~\ref{p:convergence} that $\|\cdot\|_{K_p(g)}$ is convex when $g$ attains its maximum uniquely at the origin. It remains to drop this assumption. Indeed, for arbitrary $g\in \LC^0$, set $g_j=e^{-\frac{1}{j}|\,\cdot\,|^2}g$. Then, for all $j$, $g_j$ is log-concave on $\supp (g_j)=\supp (g)$ and $\max g_j=g_j(o)=g(o)$, which is obtained \textit{uniquely} at the origin. Therefore, a simple, dominated convergence argument allows us to conclude $K_p(g_j)\to K_p(g)$ as $j\to \infty$ for all $p>-1,\,p\neq 0,\infty$. Equivalently, $\|\cdot\|_{K_p(g_j)}\to \|\cdot\|_{K_p(g)}$ point-wise. We again invoke Proposition~\ref{p:convex_converge} to deduce that $\|\cdot\|_{K_p(g)}$ is a convex function. 

Finally, the case $p=0$ follows by taking limits via Theorem~\ref{t:continuity} and again using Proposition~\ref{p:convex_converge}.
\end{proof}

\section{Proof of Theorem~\ref{t:all_g}}
\label{sec:all}
We next discuss the convexity of $\|\cdot\|_{K_p(g)}$ when $g\in \LC$ has maximum at the origin, but the origin is on the boundary of the support of $g$. Before proving the general case, it is instructive to examine the geometry when the origin lies on the boundary of the support.

\begin{lem}
\label{l:star_set}
    Let $g\in \LC$ attain its maximum at the origin, but suppose that $o\in \partial \supp(g)$. Then, for every $u\in\s^{n-1}$ such that $g(ru)=0\;\;\forall \; r>0$, we have, for all $p>-1$,
    \[
    \|u\|_{K_p(g)}=+\infty.
    \]
\end{lem}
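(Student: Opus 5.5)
This is a direct computation from Definition~\ref{def:keith_ball_bodies}, split according to the sign of $p$. Throughout, fix $u\in\s^{n-1}$ with $g(ru)=0$ for all $r>0$, and write $\psi(r)=g(ru)$. Then $\psi(0)=g(o)>0$ and $\psi\equiv 0$ on $(0,\infty)$. Note that $g(o)>0$ because $g\in\LC$ is non-identically zero and attains its maximum at $o$.

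\emph{Case $p>0$.} The integrand $g(ru)r^{p-1}$ vanishes for every $r>0$, so the integral in \eqref{eq:kbb} is $0$. Raising $0$ to the power $-\frac1p<0$ gives $+\infty$ under the usual convention, so $\|u\|_{K_p(g)}=+\infty$.

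\emph{Case $p=\infty$.} By definition $R_g(u)=\sup\{r\ge0:g(ru)>0\}=0$, since only $r=0$ qualifies. Hence $\|u\|_{K_\infty(g)}=R_g(u)^{-1}=+\infty$.

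\emph{Case $p\in(-1,0)$.} For $r>0$ we have $g(ru)-g(o)=-g(o)$, so
\[
\frac{p}{g(o)}\int_0^\infty r^{p-1}\bigl(g(ru)-g(o)\bigr)\dlat r=|p|\int_0^\infty r^{p-1}\dlat r=+\infty,
\]
because $r^{p-1}$ is not integrable at infinity when $p<0$. Raising $+\infty$ to the power $-\frac1p>0$ again gives $+\infty$.

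\emph{Case $p=0$.} The measure $\dlat(-g(ru))$ is the single atom $g(o)\delta_0$ at the left endpoint; this is the degenerate case singled out after Definition~\ref{def:keith_ball_bodies}. Therefore the exponent $-\frac1{g(o)}\int_0^\infty\log(r)\,\dlat(-g(ru))$ equals $-\log(0)=+\infty$, and $\|u\|_{K_0(g)}=+\infty$.

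To double-check the $p=0$ case through the limiting description in Theorem~\ref{t:continuity}: that proof already records that in this direction $\|u\|_{K_p(g)}=\infty$ for all $p\in(-1,\infty)$. Consistently, the monotonicity \eqref{eq:weak_chain} gives $\|u\|_{K_0(g)}\ge\|u\|_{K_q(g)}=+\infty$ for any $q>0$.

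The only delicate step is this $p=0$ case, where one must interpret the Lebesgue--Stieltjes integral when all of the mass sits at $r=0$ and $\log r=-\infty$. If the atomic interpretation is considered ambiguous, I would fall back on the monotonicity argument: $K_0(g)\subseteq K_q(g)$ for $q>0$, so a norm equal to $+\infty$ in $K_q(g)$ forces the same in $K_0(g)$.
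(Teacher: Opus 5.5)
Your argument is essentially the paper's: evaluate the defining integrals directly for $p>0$ and $p\in(-1,0)$, and reduce $p=0$ to these via Theorem~\ref{t:continuity}. Your direct atomic computation at $r=0$ is a consistent alternative and matches the remark after Definition~\ref{def:keith_ball_bodies}. The $p=\infty$ case is a harmless addition.

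One correction is needed in the case $p\in(-1,0)$. The divergence of $\int_0^\infty r^{p-1}\,\dlat r$ occurs at $r=0$, not at infinity. Since $p<0$, we have $\int_1^\infty r^{p-1}\,\dlat r=\frac{1}{|p|}<\infty$. On the other hand, $p-1<-1$ forces $\int_0^1 r^{p-1}\,\dlat r=+\infty$. This singularity at the origin is exactly what the paper highlights right after the lemma: it is the obstruction to the approximation argument when $o\in\partial\supp(g)$.
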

\begin{proof}
By Theorem~\ref{t:continuity}, we may assume $p\neq 0$. For $u$ such that $g(ru)=0\;\;\forall \; r>0$, we have:
\begin{enumerate}
    \item If $p>0$:$$\|u\|_{K_p(g)}= \left(\frac{p}{g(o)}\int_0^\infty g(ru)r^{p-1}\, \dlat r\right)^{-\frac{1}{p}} = \infty,$$
    
    \item and if $p<0$: 
    \begin{align*}\|u\|_{K_p(g)}&= \left(\frac{p}{g(o)}\int_0^\infty (g(ru)-g(o))r^{p-1}\, \dlat r\right)^{-\frac{1}{p}}
    \\
    &= \left(\frac{|p|}{g(o)}\int_0^\infty g(o)r^{p-1}\, \dlat r\right)^{\frac{1}{|p|}} 
    = \infty.\end{align*}
\end{enumerate}
We conclude.
\end{proof}

With this in hand, observe that, when moving from the smooth case in the approximation argument in Section~\ref{sec:ball_expand}, there is no way to bound $g_k(rx)-g_k(o)$ near $r=0$ by a function integrable against $r^{p-1}$ for $p\in(-1,0)$, uniformly in $k$.

Now that we have demonstrated why our Theorem~\ref{t:ball_expanded} cannot be directly upgraded to all $g\in \LC$ which reach their maximum at the origin, we must implement a different approach for such $g$. To this end, we recall the Moreau envelope of a convex function, introduced by J. J. Moreau \cite{MJJ65} (see also \cite[Definition 1.22]{RW97}).
\begin{defn}
\label{def:moreau}
    Let $v:\R^n\to (-\infty,\infty]$ be a proper, lower semi-continuous, convex function. Then, its Moreau envelope at time $t>0$ is the function
   \[
    v_t(x) = \inf_{y \in \mathbb{R}^n} \left\{ v(y) + t |x - y|^2 \right\}, \qquad x\in \R^n.
    \]
\end{defn}
In the following proposition, we list some well-known facts about the Moreau envelope of a convex function.
\begin{prop}
\label{p:moreau}
    Let $v: \mathbb{R}^n \to (-\infty, \infty]$ be a proper, lower semi-continuous, convex function. For any $t > 0$, the Moreau envelope $v_t$ given by Definition~\ref{def:moreau} is a finite, convex function on $\R^n$. Moreover, if $0 < t_1 < t_2$, then $v_{t_1}(x) \leq v_{t_2}(x)$ for all $x \in \mathbb{R}^n$. In particular, $v_t$ is monotonically increasing to $v$ pointwise as $t\to \infty$.
\end{prop}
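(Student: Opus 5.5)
We prove the three assertions of Proposition~\ref{p:moreau} directly from Definition~\ref{def:moreau}, in the order finiteness, convexity, monotonicity in $t$, and pointwise convergence to $v$.

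\emph{Finiteness.} Since $v$ is proper, pick $y_0$ with $v(y_0)<\infty$. Then $v_t(x)\le v(y_0)+t|x-y_0|^2<\infty$. For the lower bound we use that a proper, lower semi-continuous, convex function admits an affine minorant (see \cite{RTR70}), say $v(y)\ge \langle a,y\rangle+b$. Then
\[
v_t(x)\ge \inf_{y}\left\{\langle a,y\rangle+b+t|x-y|^2\right\},
\]
and the right-hand side is a strictly convex quadratic in $y$ with finite minimum $\langle a,x\rangle+b-|a|^2/(4t)$. Hence $v_t(x)>-\infty$.

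\emph{Convexity.} The function $(x,y)\mapsto v(y)+t|x-y|^2$ is jointly convex on $\R^n\times\R^n$. The infimal projection of a jointly convex function is convex, by a direct $\varepsilon$-argument: given $x_0,x_1$, choose near-minimizers $y_0,y_1$ and test $y_\lambda=(1-\lambda)y_0+\lambda y_1$ at $x_\lambda=(1-\lambda)x_0+\lambda x_1$. Since $v_t$ is finite, it is a genuine convex function on $\R^n$.

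\emph{Monotonicity.} For $t_1<t_2$ we have pointwise in $y$
\[
v(y)+t_1|x-y|^2\le v(y)+t_2|x-y|^2,
\]
so taking infima gives $v_{t_1}\le v_{t_2}$. Choosing $y=x$ shows $v_t\le v$ for all $t$.

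\emph{Convergence.} This is the main step. Fix $x$ and set $L=\lim_{t\to\infty}v_t(x)\le v(x)$. If $L=\infty$ there is nothing to prove, so assume $L<\infty$. For each $t$, choose $y_t$ with
\[
v(y_t)+t|x-y_t|^2\le v_t(x)+1/t\le L+1.
\]
Using the affine minorant, we get
\[
\langle a,y_t\rangle+b+t|x-y_t|^2\le L+1.
\]
Since $\langle a,y_t\rangle\ge \langle a,x\rangle-|a||x-y_t|$, this forces $t|x-y_t|^2-|a||x-y_t|$ to be bounded, hence $|x-y_t|=O(t^{-1/2})\to0$, so $y_t\to x$. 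Lower semi-continuity then gives
\[
v(x)\le\liminf_{t\to\infty} v(y_t)\le \liminf_{t\to\infty}\left(v_t(x)+1/t\right)=L.
\]
Therefore $L=v(x)$, and $v_t\uparrow v$ pointwise as $t\to\infty$.

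The only delicate point is obtaining the affine minorant, which is what controls both the finiteness of $v_t$ and the collapse $y_t\to x$. This is the classical separation fact from \cite{RTR70}, and we invoke it rather than reprove it.
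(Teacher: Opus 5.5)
Your proof is correct and follows the paper's argument step for step: an affine minorant gives finiteness, the $\varepsilon$-near-minimizer argument gives convexity, pointwise comparison of the objectives gives monotonicity, and near-minimizers $y_t\to x$ combined with lower semi-continuity give the limit. The only difference is that you work with $L=\lim_{t\to\infty}v_t(x)$, which handles $v(x)<\infty$ and $v(x)=\infty$ at once, whereas the paper treats these two cases separately (the second by contradiction).
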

\begin{proof}
We first show that $v_t$ is finite. Since $v$ is proper, there exists
$y_0\in\R^n$ such that $v(y_0)<\infty$. Hence, for every $x\in\R^n$,
\[
v_t(x)
\leq
v(y_0)+t|x-y_0|^2
<
\infty.
\]
Moreover, since $v$ is proper, lower-semicontinuous, and convex, there
exist $a\in\R^n$ and $b\in\R$ such that
\begin{equation}
\label{eq:affine_lower}
v(y)\geq\langle a,y\rangle+b,
\qquad y\in\R^n.
\end{equation}
Indeed, this follows from the fact that such functions are
subdifferentiable on the relative interior of their domains; see
\cite[Theorem 23.4]{RTR70}. Therefore,
\[
\begin{aligned}
v(y)+t|x-y|^2
&\geq
\langle a,y\rangle+b+t|x-y|^2\\
&=
t\left|y-x+\frac{a}{2t}\right|^2
+\langle a,x\rangle+b-\frac{|a|^2}{4t}.
\end{aligned}
\]
Taking the infimum over $y$ gives
\[
v_t(x)
\geq
\langle a,x\rangle+b-\frac{|a|^2}{4t}
>
-\infty.
\]
Thus, $v_t$ is finite on $\R^n$.

We next show the convexity of $v_t$.  Fix $x_1, x_2 \in \mathbb{R}^n$ and $\lambda \in (0, 1)$. Let $\varepsilon > 0$ be arbitrary. By the definition of the infimum, there exist points $y_1, y_2 \in \mathbb{R}^n$ such that
    \begin{equation}
    \label{eq:epsilon_bounds}
    \begin{split}
        v(y_1) + t |x_1 - y_1|^2 &\leq v_t(x_1) + \varepsilon, \\
        v(y_2) + t |x_2 - y_2|^2 &\leq v_t(x_2) + \varepsilon.
    \end{split}
    \end{equation}
    Define $x_\lambda = (1-\lambda) x_1 + \lambda x_2$ and $y_\lambda = (1-\lambda) y_1 + \lambda y_2$. By the definition of $v_t$, the infimum is bounded above by the value at $y_\lambda$:
    \[
    v_t(x_\lambda) \leq v(y_\lambda) + t |x_\lambda - y_\lambda|^2.
    \]
    Since $v$ is convex, we have $v(y_\lambda) \leq (1-\lambda) v(y_1) + \lambda v(y_2)$. Furthermore, since $|\cdot|^2$ is convex,
    \[
    |x_\lambda - y_\lambda|^2 = |(1-\lambda) (x_1 - y_1) + \lambda (x_2 - y_2)|^2 \leq (1-\lambda) |x_1 - y_1|^2 + \lambda |x_2 - y_2|^2.
    \]
    Substituting these two upper bounds yields
    \begin{align*}
        v_t(x_\lambda) &\leq \left[ (1-\lambda) v(y_1) + \lambda v(y_2) \right] + t \left[ (1-\lambda) |x_1 - y_1|^2 + \lambda |x_2 - y_2|^2 \right] \\
        &= (1-\lambda) \left( v(y_1) + t |x_1 - y_1|^2 \right) + \lambda \left( v(y_2) + t |x_2 - y_2|^2 \right).
    \end{align*}
    Applying our initial bounds \eqref{eq:epsilon_bounds} to the grouped terms, we obtain
    \[
    v_t(x_\lambda) \leq (1-\lambda) (v_t(x_1) + \varepsilon) + \lambda (v_t(x_2) + \varepsilon) = (1-\lambda) v_t(x_1) + \lambda v_t(x_2) + \varepsilon.
    \]
    Since this holds for any $\varepsilon > 0$, we may take the limit as $\varepsilon \to 0$ and obtain $$v_t(x_\lambda) \leq (1-\lambda) v_t(x_1) + \lambda v_t(x_2).$$ Thus, $v_t$ is convex.

    We now turn to the monotonicity in $t$. We take a fixed, arbitrary $x \in \mathbb{R}^n$ and let $0 < t_1 < t_2$. For any $y \in \mathbb{R}^n$,
    \[
    t_1 |x - y|^2 \leq t_2 |x - y|^2.
    \]
    Adding $v(y)$ to both sides, we obtain
    \[
    v(y) + t_1 |x - y|^2 \leq v(y) + t_2 |x - y|^2 \quad \text{for all } y \in \mathbb{R}^n.
    \]
    Let $\varepsilon > 0$. By the definition of the infimum, there exists a point $y_\varepsilon \in \mathbb{R}^n$ such that
    \[
    v(y_\varepsilon) + t_2 |x - y_\varepsilon|^2 \leq v_{t_2}(x) + \varepsilon.
    \]
    By the definition of $v_{t_1}(x)$, we have
    \[
    v_{t_1}(x) \leq v(y_\varepsilon) + t_1 |x - y_\varepsilon|^2.
    \]
    Using that $t_1 <t_2$:
    \[
    v_{t_1}(x) \leq v(y_\varepsilon) + t_1 |x - y_\varepsilon|^2 \leq v(y_\varepsilon) + t_2 |x - y_\varepsilon|^2 \leq v_{t_2}(x) + \varepsilon.
    \]
    Since $\varepsilon > 0$ is arbitrary, taking the limit as $\varepsilon \to 0$ yields $v_{t_1}(x) \leq v_{t_2}(x)$. Thus, the family $v_t(x)$ is monotonically increasing with $t$.

    Finally, we show that $\lim_{t\to \infty}v_t(x)=v(x)$.  We first consider the case when $v(x) <\infty$. By evaluating the term inside the infimum at the specific choice $y=x$, we trivially obtain an upper bound for all $t > 0$:
    $$v_t(x) \leq v(x) + t|x - x|^2 = v(x).$$
    
    Therefore, $\limsup_{t\to \infty} v_t(x) \leq v(x)$. To establish the lower bound, we use \eqref{eq:affine_lower}. For each $t>0$, we may choose $y_t\in\R^n$ such that
    \begin{equation}
    \label{eq:y_t_def}
    v(y_t) + t|x - y_t|^2 \leq v_t(x) + \frac{1}{t}.
    \end{equation}
    Combining this with our upper bound $v_t(x) \leq v(x)$, we have:
    $$v(y_t) + t|x - y_t|^2 \leq v(x) + \frac{1}{t}.$$
    Substituting \eqref{eq:affine_lower} to bound $v(y_t)$ from below yields:
    \begin{equation}
    \langle a, y_t \rangle + b + t|x - y_t|^2 \leq v(x) + \frac{1}{t}.
    \label{eq:using_affine_lower}
    \end{equation}

    We claim that $y_t \to x$ as $t \to \infty$. Indeed, we can rewrite $\langle a, y_t \rangle = \langle a, y_t - x \rangle + \langle a, x \rangle$. By the Cauchy-Schwarz inequality, $\langle a, y_t - x \rangle \geq -|a| |y_t - x|$. Substituting this into \eqref{eq:using_affine_lower} and letting $d_t = |x - y_t|$, we deduce:
    \[
    t \cdot d_t^2 - |a| d_t \leq v(x) + \frac{1}{t} - b - \langle a, x \rangle.
    \]
    For $t\geq 3$, the right-hand side is bounded by $C(x)=v(x)- b - \langle a, x \rangle+\frac{1}{2}$. Thus, $$t \cdot d_t^2 - |a| d_t \leq C(x).$$
    By Young's inequality,
\[
|a|d_t
\leq
\frac{t}{2}d_t^2+\frac{|a|^2}{2t}.
\]
Consequently,
\[
\frac{t}{2}d_t^2-\frac{|a|^2}{2t}
\leq
t d_t^2-|a|d_t
\leq C(x),
\]
and hence
\[
d_t^2
\leq
\frac{2C(x)}{t}+\frac{|a|^2}{t^2}.
\]
It follows that $d_t\to0$ as $t\to\infty$. Therefore, $y_t \to x$.

    Because $y_t \to x$ and $v$ is lower semi-continuous, we have
    \[
    \liminf_{t\to\infty} v(y_t) \geq v(x).
    \]
    Returning to \eqref{eq:y_t_def}, we isolate $v_t(x)$:
    \[
    v_t(x) \geq v(y_t) + t|x - y_t|^2 - \frac{1}{t} \geq v(y_t) - \frac{1}{t}.
    \]
    Taking the limit inferior of both sides as $t \to \infty$ gives:
    \[
    \liminf_{t\to\infty} v_t(x) \geq \liminf_{t\to\infty} \left( v(y_t) - \frac{1}{t} \right) = \liminf_{t\to\infty} v(y_t) \geq v(x).
    \]
    Since we have bounded the limit superior above by $v(x)$ and bounded the limit inferior below by $v(x)$, it follows that $\lim_{t\to \infty} v_t(x) = v(x)$. This establishes the convergence when $v(x) < \infty$. 
    
    Now, suppose $v(x) = \infty$. We must show that $\lim_{t \to \infty} v_t(x) = \infty$. By way of contradiction, suppose that this is not the case. Since $v_t(x)$ increases monotonically with $t$, this would mean that there exists a finite upper bound $M < \infty$ such that $v_t(x) \leq M$ for all $t > 0$.

   For each $t\geq1$, choose $y_t\in\R^n$ such that
\[
v(y_t)+t|x-y_t|^2
\leq
v_t(x)+\frac1t.
\]
By our contradiction assumption, the right-hand side of the preceding
inequality is bounded by $M+1$. Applying \eqref{eq:affine_lower} yields:
    \[
    \langle a, y_t \rangle + b + t|x - y_t|^2 \leq M + 1.
    \]
    By the exact same Cauchy-Schwarz manipulation as before, setting $d_t = |x - y_t|$, we obtain:
    \[
    t \cdot d_t^2 - |a| d_t \leq M + 1 - b - \langle a, x \rangle.
    \]
    Because $M$ is finite, the right-hand side is a finite constant. The same argument as before forces $d_t \to 0$, meaning $y_t \to x$. 
    
    Since $v$ is lower semi-continuous, we must have $\liminf_{t\to \infty} v(y_t) \geq v(x) = \infty$. However, from our bound above, we also know that $v(y_t) \leq v(y_t) + t|x - y_t|^2 \leq M + 1$, meaning $v(y_t)$ is bounded above by a finite constant for all $t \geq 1$. We have reached our contradiction. Thus, we must have $\lim_{t \to \infty} v_t(x) = \infty = v(x)$.
\end{proof}

We define the Moreau envelope of a log-concave function $g=e^{-v}$ to be $g_t=e^{-v_t}$. We show that this approximation preserves integrability.
\begin{lem}
\label{l:mor_log}
    Let $g = e^{-v}$ be a log-concave function that is integrable on $\mathbb{R}^n$. Then, its Moreau envelope approximation at $t>0$, given by $g_t = e^{-v_t}$, is also integrable on $\mathbb{R}^n$.
\end{lem}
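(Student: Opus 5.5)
We need $\int e^{-v_t}<\infty$. The plan is to transfer the coercivity of $v$ to $v_t$ and then apply Proposition~\ref{p:integrable}.

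Since $g$ is integrable and log-concave, Proposition~\ref{p:integrable} gives $A,B>0$ with
\[
v(y)\ge B|y|-\log A \qquad \text{for all } y\in\R^n.
\]
(If $g$ is not upper semi-continuous, first replace $v$ by its lower semi-continuous hull. This hull is still convex, has the same integral up to a null set, and is required anyway for $v_t$ to be defined as in Definition~\ref{def:moreau}.)

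Plugging this bound into the definition of $v_t$ gives
\[
v_t(x)\ge \inf_{y}\left\{B|y|+t|x-y|^2\right\}-\log A .
\]
To bound the infimum from below, use $|y|\ge |x|-|x-y|$, which yields
\[
B|y|+t|x-y|^2\ge B|x|-B|x-y|+t|x-y|^2\ge B|x|-\frac{B^2}{4t},
\]
after minimizing the quadratic $-Bs+ts^2$ over $s\ge0$. Hence
\[
g_t(x)=e^{-v_t(x)}\le A\,e^{B^2/(4t)}\,e^{-B|x|},
\]
and this bound is integrable on $\R^n$.

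Two further points complete the argument. First, $g_t$ is measurable, indeed continuous, because $v_t$ is a finite convex function by Proposition~\ref{p:moreau}. Second, $g_t$ is positive everywhere, so its integral is strictly positive. Thus $0<\int g_t<\infty$.

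The only delicate step is the reduction to a lower semi-continuous proper $v$, so that Proposition~\ref{p:integrable} and Proposition~\ref{p:moreau} apply. Everything else is the elementary estimate above.
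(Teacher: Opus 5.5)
Your proof is correct and is essentially the paper's argument. The paper likewise takes coercivity $v(y)\ge c|y|+d$ from Proposition~\ref{p:integrable}, uses the reverse triangle inequality $|y|\ge|x|-|x-y|$, and minimizes the quadratic $ts^2-cs$ to obtain $g_t(x)\le e^{c^2/(4t)-d}e^{-c|x|}$.

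Your added remarks (lower semi-continuous hull, continuity and positivity of $g_t$) are harmless. Like the paper's proof, yours tacitly needs $\int g>0$, i.e.\ $g\in\LC$, for the coercivity step. This holds wherever the lemma is applied.
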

\begin{proof}
    Since $g = e^{-v}$ is integrable on $\mathbb{R}^n$, it is bounded by an exponential function. In terms of the function $v$, this means that there exist constants $c > 0$ and $d \in \mathbb{R}$ such that 
    \[
    v(y) \geq c|y| + d \quad \text{for all } y \in \mathbb{R}^n.
    \]
    We use this to bound the Moreau envelope $v_t(x)$ from below. By definition,
    \[
    v_t(x) = \inf_{y \in \mathbb{R}^n} \left\{ v(y) + t|x - y|^2 \right\}.
    \]
    Substituting our linear lower bound, we obtain
    \[
    v_t(x) \geq \inf_{y \in \mathbb{R}^n} \left\{ c|y| + d + t|x - y|^2 \right\}.
    \]
    By the reverse triangle inequality, $|y| = |x - (x - y)| \geq |x| - |x - y|$. Applying this to the infimum yields
    \begin{align*}
        v_t(x) &\geq \inf_{y \in \mathbb{R}^n} \left\{ c(|x| - |x - y|) + d + t|x - y|^2 \right\} \\
        &= c|x| + d + \inf_{y \in \mathbb{R}^n} \left\{ t|x - y|^2 - c|x - y| \right\}.
    \end{align*}
    We can minimize the term inside the infimum by treating it as a simple quadratic in terms of the distance $r = |x - y|$. The global minimum of $tr^2 - cr$ occurs at $r = \frac{c}{2t}$, which yields a minimum value of $-\frac{c^2}{4t}$. Therefore, we have an explicit lower bound for the envelope:
    \[
    v_t(x) \geq c|x| + d - \frac{c^2}{4t}.
    \]
    Exponentiating this inequality gives a pointwise upper bound for $g_t(x)$:
    \[
    g_t(x) = e^{-v_t(x)} \leq e^{\frac{c^2}{4t} - d} e^{-c|x|}.
    \]
    Because $c > 0$, the function $e^{-c|x|}$ is integrable on $\mathbb{R}^n$. Therefore, bounded by an integrable function, $g_t$ is also integrable.
\end{proof}

\begin{proof}[Proof of Theorem~\ref{t:all_g}]
    The fact that $\|\cdot\|_{K_p(g)}$ is non-negative and positively $1$-homogeneous follows from the Definition~\ref{def:keith_ball_bodies}. To see that $\|\cdot\|_{K_p(g)}$ is proper, we have from $g\in\LC$ that the set $\{g>0\}$ has non-empty
interior. Hence there exists $x\neq o$ such that $g(rx)>0$ for some
$r>0$, and consequently
\[
\|x\|_{K_p(g)}<\infty.
\]
Thus, $\|\cdot\|_{K_p(g)}$ is not identically $+\infty$.
    
    By Theorem~\ref{t:ball_expanded}, we know that $\|\cdot\|_{K_p(g)}$ is convex when $g\in \LC^0$. For the general case, we consider an arbitrary $g\in \LC$ that reaches its maximum at the origin. Without loss of generality, we may assume that $g(o)=1$.
    
   Define the convex function $v=-\log g$. Since $g(o)=1=\|g\|_\infty$, we have $v(o)=0$ and $v\geq0$. We approximate $v$ by its Moreau envelope from Definition~\ref{def:moreau}, obtaining $v_t$. Hence,
for every $x\in\R^n$, $v_t(x)\geq0,$ while
\[
v_t(o)
\leq
v(o)+t|o-o|^2
=
0.
\]
Thus $v_t(o)=0=\min v_t$. Moreover, $v_t$ is finite on $\R^n$. Define $g_t=e^{-v_t}$. Since $v_t$ is convex, $g_t=e^{-v_t}$ is log-concave. Since $v_t$ increases monotonically to $v$ point-wise by Proposition~\ref{p:moreau}, $g_t$ decreases monotonically to $g$ point-wise, and, by the preceding,   
\[
g_t(o)=1=\|g_t\|_\infty,
\]
and $g_t$ is strictly positive; Lemma~\ref{l:mor_log} shows that it is integrable. Therefore,
$g_t \in \LC^0$, meaning $\|\cdot\|_{K_p(g_t)}$ is convex for all $t>0$.

Consider first the case $p>0$. Fix $x\in\R^n\setminus\{o\}$. For
$t\geq1$, the monotonicity of the approximation gives
\[
0\leq g_t(rx)\leq g_1(rx),\qquad r\geq0.
\]
Since $g_1\in\LC$, Proposition~\ref{p:integrable} yields that $r\mapsto g_1(rx)r^{p-1}$ is integrable on $\R_+$. By the dominated convergence theorem,
\[
\|x\|_{K_p(g_t)}^{-p}
=
p\int_0^\infty g_t(rx)r^{p-1}\,\dlat r
\rightarrow
p\int_0^\infty g(rx)r^{p-1}\,\dlat r
=
\|x\|_{K_p(g)}^{-p}.
\]
Thus,
\[
\|x\|_{K_p(g_t)}
\rightarrow
\|x\|_{K_p(g)}
\]
in $[0,\infty]$. The same convergence holds at $x=o$, since all gauges are zero. Since each $\|\cdot\|_{K_p(g_t)}$ is convex, Proposition~\ref{p:convex_converge}
implies that $\|\cdot\|_{K_p(g)}$ is convex.

We now consider the case $p\in (-1,0)$. Then, for all $r>0$ and $x\in \R^n\setminus\{0\}$, $(1-g_t(rx))r^{p-1}$ is monotonically increasing to $(1-g(rx))r^{p-1}$. Consequently, by the monotone convergence theorem,
\begin{align*}
\|x\|_{K_p(g_t)}^{-p}&= |p|\int_0^\infty(1-g_t(rx))r^{p-1}\, \dlat r 
\\
&\rightarrow  
\\
&|p|\int_0^\infty(1-g(rx))r^{p-1}\, \dlat r = \|x\|_{K_p(g)}^{-p}
\end{align*}
in $[0,\infty]$ for every $x\neq o$. The same holds at $x=o$, since
all gauges vanish there. Therefore, Proposition~\ref{p:convex_converge}
implies that $\|\cdot\|_{K_p(g)}$ is convex.

Finally, let $p_j\to0$ with $p_j\neq0$. By the preceding two cases,
each function $\|\cdot\|_{K_{p_j}(g)}$ is convex. Theorem~\ref{t:continuity} gives
\[
\|x\|_{K_{p_j}(g)}
\rightarrow
\|x\|_{K_0(g)}
\qquad\text{for every }x\in\R^n.
\]
We again have from Proposition~\ref{p:convex_converge} that
$\|\cdot\|_{K_0(g)}$ is convex. We conclude.
\end{proof}

    \section{Proof of Theorem~\ref{t:theorem}}
    \label{sec:theorem}
    We conclude by proving the inequality \eqref{eq:final_goal_updated_2d}. To this end, we introduce the function

\[
\Phi(a,b):=\int_{0}^{\infty} r^{p+1}\,f(r+a,b)\,\dlat r,
\qquad (a,b)\in\R^2.
\]
In what follows, all derivatives at points of the boundary of
$\R_+^2$, and in particular at $(0,0)$, are understood as derivatives
taken from within $\R_+^2$. Since $f\in \mathrm{LC}_2$ is integrable, it admits exponential decay at infinity; by assumption, its partial derivatives are also bounded by an exponential. Hence, differentiation under the integral sign is justified by dominated convergence, and we deduce that the derivatives of $\Phi$ are, for $(a,b)\in \R_+^2$,
\begin{equation}
\label{eq:deriv_a}
\Phi_a(a,b)=\frac{\partial}{\partial a}\Phi(a,b)
=\int_{0}^{\infty} r^{p+1}\,f_r(r+a,b)\,\dlat r,
\end{equation}
\begin{equation}
\label{eq:deriv_b}
\Phi_b(a,b)=\frac{\partial}{\partial b}\Phi(a,b)
=\int_{0}^{\infty} r^{p+1}\,f_s(r+a,b)\,\dlat r,
\end{equation}
and
\[
\Phi_{aa}(a,b)=\frac{\partial^2}{\partial a^2}\Phi(a,b)
=\int_{0}^{\infty} r^{p+1}\,f_{rr}(r+a,b)\,\dlat r,
\]
\[
\Phi_{ab}(a,b)=\frac{\partial^2}{\partial a\,\partial b}\Phi(a,b)
=\int_{0}^{\infty} r^{p+1}\,f_{rs}(r+a,b)\,\dlat r,
\]
\[
\Phi_{bb}(a,b)=\frac{\partial^2}{\partial b^2}\Phi(a,b)
=\int_{0}^{\infty} r^{p+1}\,f_{ss}(r+a,b)\,\dlat r.
\]
Its gradient and Hessian are
\[
\nabla\Phi(a,b)=
\begin{pmatrix}
\Phi_a(a,b)\\[1mm]
\Phi_b(a,b)
\end{pmatrix},
\qquad
\nabla^2\Phi(a,b)=
\begin{pmatrix}
\Phi_{aa}(a,b) & \Phi_{ab}(a,b)\\[1mm]
\Phi_{ab}(a,b) & \Phi_{bb}(a,b)
\end{pmatrix}.
\]
Thus, at $(a,b)=(0,0)$, the Hessian entries become:
\[
\nabla^2\Phi(0,0)=
\begin{pmatrix}
\displaystyle\int_{0}^{\infty} r^{p+1}f_{rr}(r,0)\,\dlat r
&
\displaystyle\int_{0}^{\infty} r^{p+1}f_{rs}(r,0)\,\dlat r
\\[3mm]
\displaystyle\int_{0}^{\infty} r^{p+1}f_{rs}(r,0)\,\dlat r
&
\displaystyle\int_{0}^{\infty} r^{p+1}f_{ss}(r,0)\,\dlat r
\end{pmatrix}.
\]
Consequently, its determinant is
\begin{equation}
\label{eq:det_formula}
\begin{split}
\det\nabla^2\Phi(0,0)=
\left(\int_{0}^{\infty} r^{p+1}f_{rr}(r,0)\,\dlat r\right)
&\left(\int_{0}^{\infty} r^{p+1}f_{ss}(r,0)\,\dlat r\right)
\\
&-\left(\int_{0}^{\infty} r^{p+1}f_{rs}(r,0)\,\dlat r\right)^2.
\end{split}
\end{equation}
Therefore, \eqref{eq:final_goal_updated_2d} is precisely the claim that
\begin{equation}
\label{eq:Phi_determniant}
\det\nabla^2\Phi(0,0) \leq 0.
\end{equation}

We consider two sub-cases.

{\bf Case 1.} Suppose $f_r(r,0)=0$ for almost all $r$ (which corresponds to $f(r,0)$ being constant along the ray). We compute and see that
\begin{equation}
\label{eq:aa}
\Phi_{aa}(0,0)=\int_{0}^{\infty} r^{p+1}f_{rr}(r,0)\,\dlat r
=-(p+1)\int_{0}^{\infty} r^{p}f_{r}(r,0)\,\dlat r.
\end{equation}
Thus, $\Phi_{aa}(0,0)=0$ and by \eqref{eq:det_formula}, $\det \nabla^2\Phi(0,0) \leq 0$. 

{\bf Case 2.} The other case is that $f_r(r,0)\neq0$ on a set of positive measure. To show \eqref{eq:Phi_determniant}, it suffices to show that the symmetric matrix $A:=\nabla^2\Phi(0,0)$ has at least one positive eigenvalue and at most one positive eigenvalue. Indeed, then its eigenvalues $(\lambda_1,\lambda_2)$ (which are real from the symmetry of $A$) satisfy $\lambda_1\ge 0\ge \lambda_2$, hence $\det A=\lambda_1\lambda_2\le 0$.

To this end, we need a criterion to determine the sign of the eigenvalues. Recall the following two facts for a $2\times 2$ symmetric matrix $M$ with eigenvalues $\lambda_1,\lambda_2\in \R$:
\begin{enumerate}
    \item $M$ is \textit{negative semi-definite}, i.e. for all $x \in \R^2$, $\langle Mx,x\rangle \leq 0$ if and only if $\lambda_1,\lambda_2\leq 0$;
    \item $M$ is \textit{positive semi-definite}, i.e. for all $x \in \R^2$, $\langle Mx,x\rangle \geq 0$ if and only if $\lambda_1,\lambda_2\geq 0$.
\end{enumerate}

We briefly recall why this is the case. Since $M$ is symmetric, there exists an orthogonal matrix $U$ and a diagonal matrix $\Lambda$ (with entries $\lambda_1,\lambda_2$) such that $M=U\Lambda U^T$. Thus, for every $x\in \R^2$:
\[
\langle Mx,x\rangle = \langle U\Lambda U^T x,x \rangle =  \langle \Lambda \left(U^T x\right), \left(U^Tx\right) \rangle \overset{y=U^T x}{=} \langle \Lambda y,y \rangle = \lambda_1 y_1^2 + \lambda_2 y_2^2.
\]
Therefore, by taking the contrapositive of these statements, we have our criterion:
\begin{enumerate}
    \item $M$ has at least one positive eigenvalue if and only if there exists $x\in\R^2$ such that $\langle Mx,x \rangle > 0$;
    \item $M$ has at least one negative eigenvalue if and only if there exists $x\in\R^2$ such that $\langle Mx,x \rangle < 0$.
\end{enumerate}

We first show that $A$ has at least one positive eigenvalue. This will follow from the fact that $f$ has maximum at the origin and is unimodal. Indeed, we have $f_r(r,0) \leq 0$ for almost all $r$ and $f_r(r,0) < 0$ on a set of positive measure. By \eqref{eq:aa}, we deduce:
\begin{equation}\label{eq:Phi_aa_nonneg}
\Phi_{aa}(0,0)> 0.
\end{equation}
Consequently, \[
\langle \nabla^2\Phi(0,0)e_1,e_1\rangle=\Phi_{aa}(0,0) > 0,
\]
i.e. by our criterion, $\nabla^2\Phi(0,0)$ has at least one positive eigenvalue.

\medskip

Next, recall that $\nabla\Phi=\Phi\cdot\nabla\log\Phi. $ Differentiating once more yields the matrix identity
\begin{equation}\label{eq:hessian_identity}
\nabla^2\Phi
=
\Phi\Big(\nabla^2\log\Phi+\nabla\log\Phi\otimes\nabla\log\Phi\Big).
\end{equation}
Since $r\mapsto r^{p+1}$ is log-concave on $(0,\infty)$ for $p>-1$ and $(r,s)\mapsto f(r,s)$ is log-concave,
an application of Pr\'ekopa's theorem, Proposition~\ref{p:pre} to the function $$(a,b,r)\mapsto\chi_{[0,\infty)}(r)r^{p+1}f(r+a,b)$$
implies that $(a,b)\mapsto \Phi(a,b)$ is log-concave. Hence,
\begin{equation}
\label{eq:using_log_concavity}
\nabla^2\log\Phi(a,b)\preceq 0
\end{equation}
at every $(a,b)\in(0,\infty)^2$ for which $\Phi(a,b)>0$.
Therefore, at such points,
\begin{equation}\label{eq:HessPhi_rankone_bound}
\nabla^2\Phi(a,b)\ \preceq\ \Phi(a,b)\nabla\log\Phi(a,b)\otimes\nabla\log\Phi(a,b)
\end{equation}
Observe that, since $\Phi(0,0)>0$, $\Phi$ is positive in a neighborhood of $(0,0)$ relative to $\R_+^2$. From continuity, we may then send $(a,b)\to (0,0)$ and deduce from \eqref{eq:HessPhi_rankone_bound}, that, for every $v\in\R^2$,
\begin{equation}
\label{eq:inner_compare}
\begin{split}
\langle Av,\,v\rangle &\leq \Phi(0,0)\langle \big(\nabla\log\Phi(0,0)\otimes \nabla \log\Phi(0,0)\big)v,v\rangle 
\\
&= \Phi(0,0)\langle \nabla \log\Phi(0,0),v \rangle^2.
\end{split}
\end{equation}
Observe that $\nabla\log\Phi(0,0)$ is a vector in $\R^2$. If $\nabla \log\Phi(0,0)=o$, we have from \eqref{eq:inner_compare} the inequality
\[
\langle Av,v\rangle\le 0 \qquad \text{for all }v\in\R^2,
\]
so $A$ is negative semi-definite, contradicting \eqref{eq:Phi_aa_nonneg}. Consequently, the vector $\nabla\log\Phi(0,0)$ cannot be the origin, and we may find a non-zero $\ell\in \R^2$ such that $$\nabla\log\Phi(0,0)\perp \ell.$$ By \eqref{eq:inner_compare},
\begin{equation}
\label{eq:bad_ell}
\langle A\ell,\,\ell\rangle \leq 0.
\end{equation}
This shows $A$ has \emph{at most one positive eigenvalue}. Indeed, suppose that both eigenvalues of $A$ were strictly positive. Since $A$ is symmetric, this would imply that $A$ is positive definite, i.e. $\langle Av,v\rangle>0$ for every $v\neq o,$ contradicting \eqref{eq:bad_ell}.

\medskip

Combining these two facts, the eigenvalues of $A=\nabla^2\Phi(0,0)$ are of the form $(\lambda_1,\lambda_2)$ with $\lambda_1\ge 0\geq \lambda_2$,
and therefore $$\det \nabla^2\Phi(0,0)=\lambda_1\lambda_2\leq 0,$$ which is exactly \eqref{eq:Phi_determniant}, and hence equivalent to \eqref{eq:final_goal_updated_2d}. We conclude. \hfill \qedsymbol

\medskip

{\bf Acknowledgments:}
We would like to thank Artem Zvavitch, Matthieu Fradelizi, Dario Cordero-Erausquin, Tomasz Tkocz, Michael Roysdon and Juli\'an Haddad for the fruitful conversations over the past five years on radial $p$th mean bodies and log-concave functions. We especially thank Richard Gardner for the conversations in May and June 2024 and for encouraging us to pursue the extension of Ball's bodies to the range $p \in (-1,0)$.

{\bf Funding:} 
D. Langharst was funded by the U.S. National Science Foundation's MSPRF fellowship via NSF grant DMS-2502744.

\bibliographystyle{siam}

\end{document}